\newcommand{\adddetails}[1]{\ifthenelse{\equal{#1}{0}}{\toggletrue{notes}}{\togglefalse{notes}}}
\newcommand{\Bornot}[2]{\iftoggle{notes}{#1}{#2}}
\numberwithin{equation}{section}
\newcommand{\markdef}[1]{\textbf{#1}}
\newcommand{\E}{E_{\mathbb{B}}}             
\newcommand{\EH}{E_{\mathbb{H}}}
\newcommand{\Mod}{\operatorname{Mod}}
\newcommand{\embeds}{\sqsubseteq}
\DeclareMathOperator{\rnk}{rnk}
\newenvironment{enumerate-(a)}{\begin{enumerate}[label={\upshape (\alph*)}, leftmargin=2pc]}{\end{enumerate}}
\newenvironment{enumerate-(a)-r}{\begin{enumerate}[label={\upshape (\alph*)}, leftmargin=2pc,resume]}{\end{enumerate}}
\newenvironment{enumerate-(a)-5}{\begin{enumerate}[label={\upshape (\alph*)}, leftmargin=2pc,start=5]}{\end{enumerate}}
\newenvironment{enumerate-(A)}{\begin{enumerate}[label={\upshape (\Alph*)}, leftmargin=2pc]}{\end{enumerate}}
\newenvironment{enumerate-(A)-r}{\begin{enumerate}[label={\upshape (\Alph*)}, leftmargin=2pc,resume]}{\end{enumerate}}
\newenvironment{enumerate-(i)}{\begin{enumerate}[label={\upshape (\roman*)}, leftmargin=2pc]}{\end{enumerate}}
\newenvironment{enumerate-(i)-r}{\begin{enumerate}[label={\upshape (\roman*)}, leftmargin=2pc,resume]}{\end{enumerate}}
\newenvironment{enumerate-(I)}{\begin{enumerate}[label={\upshape (\Roman*)}, leftmargin=2pc]}{\end{enumerate}}
\newenvironment{enumerate-(I)-r}{\begin{enumerate}[label={\upshape (\Roman*)}, leftmargin=2pc,resume]}{\end{enumerate}}
\newenvironment{enumerate-(1)}{\begin{enumerate}[label={\upshape (\arabic*)}, leftmargin=2pc]}{\end{enumerate}}
\newenvironment{enumerate-(1)-r}{\begin{enumerate}[label={\upshape (\arabic*)}, leftmargin=2pc,resume]}{\end{enumerate}}
\newenvironment{itemizenew}{\begin{itemize}[leftmargin=2pc]}{\end{itemize}}
\newtheorem{theorem}{Theorem}[section]
\newtheorem{lemma}[theorem]{Lemma}
\newtheorem{corollary}[theorem]{Corollary}
\newtheorem{proposition}[theorem]{Proposition}
\newtheorem{question}[theorem]{Question}
\newtheorem{questions}[theorem]{Questions}
\newtheorem{problem}[theorem]{Problem}
\newtheorem{fact}[theorem]{Fact}
\newtheorem{claim}{Claim}[theorem]
\theoremstyle{definition}
\newtheorem{definition}[theorem]{Definition}
\newtheorem{example}[theorem]{Example}
\newtheorem{conjecture}[theorem]{Conjecture}
\theoremstyle{remark}
\newtheorem{remark}[theorem]{Remark}
\providecommand{\customgenericname}{}
\newcommand{\newcustomtheorem}[2]{%
  \newenvironment{#1}[1]
  {%
   \renewcommand\customgenericname{#2}%
   \renewcommand\theinnercustomgeneric{##1}%
   \innercustomgeneric
  }
  {\endinnercustomgeneric}
}
\begin{document}

\title[Structural results on idealistic equivalence relations]{Structural results on \\ idealistic equivalence relations}

\date{}
\author[F.~Calderoni]{Filippo Calderoni }
\address{Department of Mathematics, Rutgers University, 
Hill Center for the Mathematical Sciences,
110 Frelinghuysen Rd.,
Piscataway, NJ 08854-8019}
\email{filippo.calderoni@rutgers.edu}

\author[L.~Motto~Ros]{Luca Motto~Ros }
\address{Dipartimento di matematica \guillemotleft Giuseppe Peano\guillemotright, Via Carlo Alberto 10, Universit\`a di Torino, 10123 Torino, Italy}
\email{luca.mottoros@unito.it}

\begin{abstract}
We address some fundamental problems concerning the structure of idealistic equivalence relations. 
In particular, we show that, under analytic determinacy, there are continuum many idealistic analytic equivalence relations that are not classwise Borel isomorphic to an orbit equivalence relation.
Moreover, we also discuss alternative formulations of the \(E_1\) conjecture, and present an elementary counterexample to one of its earliest variants proposed by Hjorth and Kechris in 1997.
\end{abstract}

\subjclass[2020]{Primary: 03E15, 03E60.}
\thanks{The \Bornot{first}{second} author is supported by the NSF Grant DMS -- 2348819. The \Bornot{second}{third} author is supported by the Italian PRIN 2022 ``Models, sets and classifications'', prot.\ 2022TECZJA, and is a member of INdAM-GNSAGA. We thank A.\ Kechris, A.\ Shani, and S.\ Solecki for useful comments. The results of Section~\ref{sec:Becker} are based on unpublished work of H.\ Becker~\cite{Bec}: we are grateful to him for sharing his unpublished notes and for allowing us to include some of his arguments.}

\maketitle



\section{Introduction}

\subsection{Background} \label{subsec:introbackground}

Understanding the quotient of Polish or standard Borel spaces modulo definable equivalence relations is a major program in modern descriptive set theory. Most of the effort concentrates on the class of analytic equivalence relations, which are compared to each other using the notion of Borel reducibility and organized in complexity degrees. 
Given two equivalence relations \( E \) and \( F \) on standard Borel spaces \( X \) and \( Y \), we say that \( E \) is \textbf{Borel reducible} to \( F \), in symbols \( E \leq_B F \), if there is a Borel function \( f \colon X \to Y \) such that \( x \mathrel{E} x' \iff f(x) \mathrel{F} f(x') \), for all \( x,x' \in X \). 
We denote by \( \sim_B \) the equivalence relation induced by the preorder \( \leq_B \), which is called \textbf{Borel bireducibility}. 
The task of understanding the structure induced by \( \leq_B \) 
has been a challenge for over the last four decades.

A crucial role in the theory of analytic equivalence relations is played by the so-called \textbf{orbit equivalence relations}, namely, by those equivalence relations whose classes are the orbits induced by a Borel action of a Polish group on some standard Borel space. A related notion is the following. Given an analytic equivalence relation \(E \) on a standard Borel space \( X \), we say that \( E \) is \textbf{idealistic} if there is a map assigning to each equivalence class \( C \in X/E \) a nontrivial ccc \(\sigma\)-ideal \( I_C \) on \( C \) such that \( C \notin I_C \) and the map \( C \mapsto I_C \) is Borel, in the sense that for each Borel \( A \subseteq X^2 \), the set \( A_I  \subseteq X \) defined by 
\begin{equation} \label{eq:idealistic}
x\in A_I \iff \{ y \in [x]_E \mid (x,y) \in A \} \in I_{[x]_E} 
\end{equation}
is Borel.
The terminology ``idealistic'' first appeared in the paper~\cite{KecLou97} by Kechris and Louveau, but one can trace the notion back to Kechris' work about orbit equivalence relations induced by locally compact group actions (see~\cite{Kec92,Kec94}).

A lot of speculation has surrounded the nuance  between idealistic and orbit equivalence relations
for over thirty years.
While orbit equivalence relations have been widely studied in the literature, very little is known about idealistic equivalence relations and their structure under \( \leq_B \).
As observed in~\cite[Section~1.II]{Kec92}
all orbit equivalence relations are idealistic, so it is natural to raise the following question.

\begin{question}[Folklore] \label{question:basic}
Is it true that every idealistic equivalence relation on a standard Borel space is Borel bireducible with an orbit equivalence relation?
\end{question}

Although fundamental, the above question is still open. It admits several variants. For example, one could restrict the attention to specific equivalence relations, like the Borel ones or the analytic equivalence relations having only Borel classes. (The latter encompasses all orbit equivalence relations, including those which are proper analytic.) Moreover one could consider natural variants of Borel bireducibility that have appeared in the literature, such as Borel isomorphism or classwise Borel isomorphism (\cite[Definition~2.1]{Mot12}; see also Definition~\ref{def:classwise} for the latter).

A fruitful approach to uncover structural results for definable equivalence relations consists of proving dichotomy theorems. 
The prototype of a dichotomy theorem would state that for every equivalence relation in a given class, either it satisfies some regularity property or, otherwise, a certain object is Borel reducible to it; such an object would thus provide a canonical obstruction against the regularity property from the first alternative.

In (\cite[Theorem 4.1]{KecLou97}) it is proved that an obstruction to a Borel \( E \) being 
idealistic is that \( E_1 \leq_B E \), where \( E_1 \) is the relation of eventual agreement on countable sequences of reals.
This led Kechris and Louveau to ask if this can be turned into a dichotomy  (see~\cite[p.\ 241]{KecLou97}), which would thus become a very elegant characterization of the class of idealistic Borel equivalence relations.

\begin{problem}[Kechris-Louveau] \label{conj1intro}
If \( E \) is a Borel equivalence relation, is it true that either \( E_1 \leq_B E \) or \( E \)
is idealistic?
\end{problem}

Solecki~\cite{Sol09} gave a positive answer to Problem~\ref{conj1intro} in the restricted context of coset equivalence relations induced by the translation action of Abelian Borel subgroups of Polish groups.
Nevertheless, the dichotomy from Problem~\ref{conj1intro} is in general too strong, and admits easy counterexamples.%
\footnote{We thank A.\ Shani for pointing it out to us.}
Indeed, in~\cite[Theorem~2.4]{Kec92} it is proven that if \( E \) is an equivalence relation on a standard Borel space \( X \), then \( E \) has
a Borel selector if and only if \( E \) is smooth and idealistic. Therefore, every smooth equivalence relation \( E \) without a Borel selector is not idealistic, yet \( E_1 \nleq_B E \). However, there are variants of Problem~\ref{conj1intro} which are not ruled out by the previous counterexamples. For example, Hjorth and Kechris proposed the following conjecture, which can be found in~\cite[Conjecture 1]{HjoKec} and~\cite[Conjecture 6.1]{HjoKec2}:

\begin{conjecture}[Hjorth-Kechris] \label{conj2intro}
For \( E \) a Borel equivalence relation, either \( E_1 \leq_B E \) or \( E \) is Borel bireducible with an orbit equivalence relation.
\end{conjecture}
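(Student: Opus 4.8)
In line with the counterexample announced in the abstract, the plan is to refute Conjecture~\ref{conj2intro}: I would produce a Borel equivalence relation $E$ with $E_1 \nleq_B E$ that is nonetheless not Borel bireducible with any orbit equivalence relation. The first thing to settle is that such a witness must be non-smooth, so that the obvious candidates are ruled out. Indeed, if $E$ is smooth then it is Borel bireducible with the equality relation on a finite set, on $\N$, or on $\R$ --- in the last case because Silver's theorem provides a perfect set of pairwise $E$-inequivalent points, so equality on $\R$ Borel reduces to $E$ --- and all three are orbit equivalence relations. In particular a smooth equivalence relation without a Borel selector, although it refutes the stronger variants of the conjecture phrased with Borel isomorphism or classwise Borel isomorphism (it is not even Borel isomorphic to any orbit equivalence relation), is \emph{not} a counterexample to Conjecture~\ref{conj2intro} as stated; this is exactly why Conjecture~\ref{conj2intro} is not already settled by the counterexamples to Problem~\ref{conj1intro} discussed above, and why a genuinely new, non-smooth witness is needed.

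To keep the witness from inadvertently refuting the still-open $E_1$ conjecture, I would additionally arrange that $E$ is Borel reducible to some orbit equivalence relation $\mathbb{G}$. Then $E_1 \nleq_B \mathbb{G}$ by the Kechris--Louveau theorem~\cite{KecLou97}, so $E_1 \nleq_B E$ comes for free, and the whole problem reduces to exhibiting a Borel $E$ with $E_0 <_B E \leq_B \mathbb{G}$ whose $\leq_B$-degree contains no orbit equivalence relation. Here I would look for an $E$ of elementary combinatorial form built from a concrete orbit equivalence relation --- a Borel subequivalence relation, a Borel quotient, a non-invariant Borel restriction, or a modest product or iterate of $E_0$ (such as the product of $E_0$ with equality on $\R$) --- arranged so that its degree lies strictly between two orbit-equivalence-relation degrees with nothing of that kind in between.

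The substantive step is verifying that no orbit equivalence relation lies in $[E]_{\sim_B}$. Since $\sim_B$ is coarse and orbit-equivalence-relation degrees are dense in large parts of the Borel hierarchy, this calls for a $\sim_B$-invariant structural property that holds of every orbit equivalence relation but fails for $E$ (or conversely), robust enough to be transported along Borel reductions in both directions. One natural route is to argue that an orbit equivalence relation $F$ with $F \leq_B E$ and $E \leq_B F$ would inherit from $E$, through the two reductions together with the Becker--Kechris normalization of $F$ to a continuous Polish group action, a concrete ``anti-orbit'' feature --- say a failure of some additivity, Borel-selection, or saturation property enjoyed by all Polish group actions --- contradicting that $F$ is an orbit equivalence relation.

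The main obstacle is precisely this last step: pinning down which elementary $E$ does the job and which invariant cleanly separates $[E]_{\sim_B}$ from every orbit-equivalence-relation degree. Everything else --- non-smoothness, $E \leq_B \mathbb{G}$, and hence $E_1 \nleq_B E$ --- is routine. The payoff is that the counterexample isolates ``$E \leq_B$ an orbit equivalence relation'' as the right second alternative in the $E_1$ dichotomy, and shows that strengthening it to ``$E$ is Borel bireducible with an orbit equivalence relation'', as in Conjecture~\ref{conj2intro}, overshoots.
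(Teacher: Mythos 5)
Your proposal is a strategy outline rather than a proof: the decisive content --- exhibiting a concrete Borel \(E\) with \(E_1 \nleq_B E\) and proving that no orbit equivalence relation is Borel bireducible with it --- is exactly the step you leave open (``the main obstacle is precisely this last step''). Moreover, the candidate witnesses you float are unpromising: a product such as \(E_0 \times \mathrm{id}_{\mathbb{R}}\) \emph{is} an orbit equivalence relation (induced by the product action), a Borel restriction of a countable Borel equivalence relation is again countable Borel (hence orbit), and more generally any \emph{idealistic} relation Borel reducible to a Borel orbit equivalence relation is classwise Borel isomorphic to one (Corollary~\ref{cor:Borelidealistic}), so a witness must fail idealisticity while sitting below an orbit relation --- something no ``modest combinatorial modification of \(E_0\)'' is known to achieve. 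The part of your plan that does work is the reduction of the \(E_1\)-half to known results: if \(E \leq_B F\) with \(F\) idealistic (e.g.\ orbit), then \(E_1 \nleq_B E\) by Theorem~\ref{thm : idealistic obstruction}; this is exactly how the paper argues. Your smoothness discussion is also correct but peripheral.

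What fills the gap in the paper is a combination of two specific known theorems, which supply both the witness and the \(\sim_B\)-invariant ``anti-orbit feature'' you were searching for. The witness is Hjorth's relation \(\EH\) (Proposition~\ref{prop:Hjoessctbl}): a Borel equivalence relation with \(\EH \leq_B F\) for some \emph{countable} Borel equivalence relation \(F\), yet not Borel bireducible with any countable Borel equivalence relation. The transport mechanism is the Kechris--Macdonald lemma (Proposition~\ref{prop:kechrismacdonald}): if \(E'\) is idealistic and \(E' \leq_B F\) with \(F\) Borel, then \(E' \sqsubseteq_{cB} F\). Now if some orbit (hence idealistic) equivalence relation \(E'\) satisfied \(E' \sim_B \EH\), then \(E' \leq_B F\), so \(E' \simeq_{cB} F'\) where \(F'\) is the restriction of \(F\) to an invariant Borel set; but \(F'\) is again countable Borel, and \(\EH \sim_B E' \sim_B F'\) contradicts Hjorth's theorem. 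So the invariant separating \([\EH]_{\sim_B}\) from all orbit degrees is precisely ``bireducible with a countable Borel equivalence relation,'' not any ad hoc additivity or saturation property; without identifying these two inputs (or equivalents), your argument does not go through.
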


Conjecture~\ref{conj2intro} can be further weakened by requiring that if \( E_1 \nleq_B E\), then \( E \) is Borel bireducible with an \emph{idealistic} equivalente relation. These two versions of the conjecture are obviously related to Question~\ref{question:basic}, and could even be unified if the latter admits a positive answer.

\subsection{Content of the paper} \label{subsec:intro2}

The main result of the paper is a strong negative answer to one of the possible variations of Question~\ref{question:basic}. Let \( \mathcal{I} \) be the collection of all idealistic analytic equivalence relations which have only Borel equivalence classes 
and are not \emph{classwise Borel isomorphic} to an orbit equivalence relation.
(For the precise definition of class-wise Borel isomorphism see Definition~\ref{def:classwise}.)

\begin{theorem} \label{thm:intro1}
Assume \( \boldsymbol{\Sigma}^1_1 \)-determinacy. Then there is an embedding of \( (\mathcal{O}, {\leq_B}) \) into \( (\mathcal{I},{\leq_B}) \), where \( \mathcal{O} \) is the class of all Borel orbit equivalence relations with uncountably many orbits.

In particular, there are \( 2^{\aleph_0} \)-many \( \leq_B \)-incomparable idealistic equivalence relations which are not classwise Borel isomorphic to an orbit equivalence relation.
\end{theorem}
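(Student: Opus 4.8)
The plan is to combine a single ``bad'' example with the complexity already present in $\mathcal{O}$. By the results of Section~\ref{sec:Becker} --- which is where the hypothesis of $\boldsymbol{\Sigma}^1_1$-determinacy is used, and which builds on Becker's unpublished work~\cite{Bec} --- we have at our disposal an idealistic analytic equivalence relation $\E$ with only Borel classes that is not classwise Borel isomorphic to any orbit equivalence relation. Note in passing that $\E$ is necessarily non-smooth: by~\cite[Theorem~2.4]{Kec92} a smooth idealistic equivalence relation has a Borel selector, hence is classwise Borel isomorphic to the identity relation on a standard Borel space, which is an orbit equivalence relation. Now for $F\in\mathcal{O}$ let $F^{\bullet}:=F\sqcup\mathbf{1}$ be the disjoint union of $F$ with a one-point equivalence relation, so that $F^{\bullet}$ is again a Borel orbit equivalence relation with uncountably many orbits, $F^{\bullet}\sim_B F$, and $F^{\bullet}$ has an isolated one-point class, say $\{*\}$. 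Define
\[
 \Phi(F):=\E\times F^{\bullet}.
\]
I claim $\Phi$ --- which is $\le_B$-monotone, hence descends to the $\sim_B$-quotient preorders --- is the desired order-embedding of $(\mathcal{O},\le_B)$ into $(\mathcal{I},\le_B)$.

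First I would check $\Phi(F)\in\mathcal{I}$. It is analytic and has only Borel classes, these being the products $C\times D$ of a (Borel) $\E$-class $C$ with a (Borel) $F^{\bullet}$-class $D$. It is idealistic: $F^{\bullet}$ is idealistic (every orbit equivalence relation is, by~\cite{Kec92}, and adjoining an isolated point does no harm), and idealisticity is closed under finite products, the ccc $\sigma$-ideal on $C\times D$ being a Fubini-type product of $I_C$ and $I_D$, which one checks remains ccc in this Borel setting. Finally $\Phi(F)$ is not classwise Borel isomorphic to an orbit equivalence relation: the set $W:=X_{\E}\times\{*\}$ is Borel and $\Phi(F)$-invariant, and $a\mapsto(a,*)$ is a Borel isomorphism of $\E$ onto $\Phi(F)\restriction W$; but being classwise Borel isomorphic to an orbit equivalence relation passes to restrictions to invariant Borel sets --- given witnesses $h,k$ for $\Phi(F)\cong_{cB}H$ with $H$ an orbit equivalence relation on $Y$, the set $W':=k^{-1}(W)$ is Borel and $H$-invariant, $H\restriction W'$ is an orbit equivalence relation, and $h\restriction W$, $k\restriction W'$ witness $\Phi(F)\restriction W\cong_{cB}H\restriction W'$ --- so $\E$ would be classwise Borel isomorphic to an orbit equivalence relation, a contradiction.

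Order-preservation is immediate: a Borel reduction $f\colon F\to G$ yields $\mathrm{id}\times(f\sqcup\mathrm{id})\colon\Phi(F)\to\Phi(G)$. The crux --- and the step I expect to be the main obstacle --- is \emph{order-reflection}: $\Phi(F)\le_B\Phi(G)$ must imply $F\le_B G$. Since $F^{\bullet}\sim_B F$ and $G^{\bullet}\sim_B G$, this reduces to a cancellation law for the operation $\E\times(-)$ on orbit equivalence relations, namely that $\E\times A\le_B\E\times B$ implies $A\le_B B$ for all orbit equivalence relations $A,B$. The plan here is to exploit the structural features of $\E$ supplied by Section~\ref{sec:Becker}: that $\E$ is ergodic with respect to its ideal --- equivalently, $\le_B$-join-irreducible, so that a copy of $\E$ cannot be split off along an invariant Borel partition of $\Phi(F)$ --- and that, $\E$ being idealistic, any Borel reduction of $\E\times A$ into a product $\E\times B$ is, modulo the per-class ideals, rigid in the $\E$-coordinate. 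Fibering such a reduction over the $\E$-part and invoking the Jankov--von Neumann-style uniformizations available on an idealistic relation should then produce a genuine Borel reduction $A\to B$; making the fibered selections Borel via the ccc $\sigma$-ideals is where the real work lies.

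Granting the embedding, the ``in particular'' clause follows at once: by the theorem of Adams and Kechris there is a family of $2^{\aleph_0}$ pairwise $\le_B$-incomparable countable Borel equivalence relations, each of which (by Feldman--Moore) is an orbit equivalence relation and, being non-smooth, has uncountably many orbits, hence lies in $\mathcal{O}$; their images under the order-embedding $\Phi$ are then $2^{\aleph_0}$ pairwise $\le_B$-incomparable members of $\mathcal{I}$.
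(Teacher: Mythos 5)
Your proposal takes a genuinely different route from the paper, but it has a real gap at exactly the step you flag as ``the crux.'' You define \( \Phi(F) = \E \times F^\bullet \), a \emph{product} construction, and the reflection step then amounts to a cancellation law for \( \E \times (-) \): from \( \E \times A \leq_B \E \times B \) deduce \( A \leq_B B \). You do not prove this; you offer a plan invoking ergodicity of \( \E \), rigidity ``modulo the per-class ideals,'' and Jankov--von Neumann uniformizations, and explicitly concede that ``making the fibered selections Borel via the ccc \( \sigma \)-ideals is where the real work lies.'' Cancellation laws for Borel reducibility of products are notoriously delicate and in general false, and nothing in Section~\ref{sec:Becker} is of the form you would need (no ergodicity or join-irreducibility statement about \( \E \) is established there). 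Your auxiliary claim that idealisticity is closed under finite products via a Fubini-type ideal is also asserted without proof, and one would have to check that the Fubini product remains ccc and that the Borelness condition~\eqref{eq:idealistic} survives; this is not immediate.

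The paper sidesteps all of this by using disjoint union rather than product: the embedding is \( F \mapsto \E \oplus F \) (Theorem~\ref{thm : main2}). The point is that \( \E \) is Ulm classifiable, hence \( E_0 \not\leq_B \E \). Given a reduction \( f \) of \( \E \oplus F_1 \) into \( \E \oplus F_2 \), one splits the domain of \( F_1 \) into the \( F_1 \)-invariant part \( Y_1 \) that \( f \) sends into \( \E \)'s domain and its complement. The restriction of \( F_1 \) to \( Y_1 \) Borel reduces to \( \E \), hence has \( E_0 \) not below it, hence is smooth by Glimm--Effros, hence \( \leq_B \id_\R \); the complementary part reduces directly into \( F_2 \); Silver's dichotomy and the absorption Proposition~\ref{prop:absoption} then reassemble \( F_1 \leq_B F_2 \). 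This makes the reflection step completely elementary, whereas your product version leaves it open. If you want to salvage your approach, you would need to actually prove the cancellation law for \( \E \times (-) \) on \( \mathcal{O} \), which looks substantially harder than what the paper needs; switching to \( \oplus \) and using \( E_0 \not\leq_B \E \) is the shortcut you were missing.
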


To fully appreciate Theorem~\ref{thm:intro1}, recall that the structure of \( (\mathcal{O}, {\leq_B}) \) is extremely rich. For example, 
Adams and Kechris~\cite{AdaKec} proved that the partial order of inclusion modulo finite sets on \(\mathcal{P}(\omega)\) embeds into \( (\mathcal{O}, {\leq_B}) \).

It must be noted that Theorem~\ref{thm:intro1} does not fully answer Question~\ref{question:basic}:
this is because classwise Borel isomorphism does not coincide with Borel bireducibility on non-Borel (even orbit) analytic equivalence relations --- see Section~\ref{sec:idealistic} for more details. Moreover, it would be desirable to remove the determinacy assumption from Theorem~\ref{thm:intro1}, and get a purely \( \mathsf{ZFC} \)-result.

We also observe that Question~\ref{question:basic} has instead a positive answer if we restrict to idealistic equivalence relations which are \( \leq_B\)-below some Borel orbit equivalence relation (Proposition~\ref{prop:intro2}), or \( \leq_B \)-below some orbit equivalence relation induced by a Borel action of the infinite symmetric group \( S_\infty \) (Corollary~\ref{cor:intro2}).

\begin{proposition}\label{prop:intro2} 
Let \( E \) be an idealistic (Borel) equivalence relation, and suppose that \( E \leq_B F \) for some Borel orbit equivalence relation \( F \). Then \( E \) is classwise Borel isomorphic to (and hence Borel bireducible with) a Borel orbit equivalence relation.
\end{proposition}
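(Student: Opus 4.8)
The plan is to exploit the well-known characterization (due to Kechris, \cite[Theorem~2.4]{Kec92}, as cited above) that an equivalence relation has a Borel selector if and only if it is smooth and idealistic, together with the fact that any equivalence relation Borel reducible to a Borel orbit equivalence relation is itself ``tame'' enough to make the abstract machinery of Effros-type theorems apply. More precisely, I would first argue that if \(E\) is idealistic via the assignment \(C \mapsto I_C\), and \(f \colon X \to Y\) is a Borel reduction of \(E\) to a Borel orbit equivalence relation \(F = E^Y_G\) induced by a Borel action of a Polish group \(G\), then the pushforward structure lets us transport the ideals \(I_C\) to ccc \(\sigma\)-ideals on the \(F\)-classes that meet the range of \(f\). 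The key point is that the saturation \([f(X)]_F\) is a Borel \(F\)-invariant set, so \(F\) restricted to it is again a Borel orbit equivalence relation, and \(E\) is classwise Borel isomorphic to \(F \restriction [f(X)]_F\) once we know \(f\) can be taken to be a classwise Borel isomorphism onto its saturated image — this last reduction-to-isomorphism step is exactly where idealisticity of \(E\) is used, via a selector-type argument on the fibers of \(f\).

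The heart of the matter is therefore the following: given the Borel reduction \(f\), the relation \(E\) is Borel isomorphic (even classwise) to the equivalence relation \(E_f\) on \(X\) defined by \(x \mathrel{E_f} x'\) iff \(f(x) \mathrel{F} f(x')\), which is just \(E\) itself; what we really want is that \(f\) descends to a \emph{Borel} bijection between \(X/E\) and a Borel \(F\)-invariant subset of \(Y/F\). For a general Borel reduction this fails, but here the preimage equivalence relation has the feature that on each \(F\)-class \(D\) meeting \(\operatorname{ran}(f)\), the set \(f^{-1}(D)\) is a union of \(E\)-classes, and the fibers \(f^{-1}(y)\) for \(y \in D\) are Borel. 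I would use idealisticity of \(E\) to manufacture, for each such \(D\), a canonical \(E\)-class inside \(f^{-1}(D)\) in a Borel way, by applying the idealistic assignment \(C \mapsto I_C\) to pull back a non-small subset; combined with the Lusin-Novikov uniformization for the fibers of \(f\), this should yield a Borel transversal-like selection of \(E\)-classes over \(Y/F\), hence the desired classwise Borel isomorphism of \(E\) with \(F \restriction B\) for a Borel invariant \(B \subseteq Y\).

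The main obstacle I anticipate is the ccc requirement interacting correctly with the pushforward: when we try to define \(I_D\) on an \(F\)-class \(D\) from the \(I_C\)'s for the \(E\)-classes \(C \subseteq f^{-1}(D)\), there could be uncountably many such \(C\) (indeed \(F\) has uncountably many orbits, by hypothesis on \(\mathcal{O}\), but a single \(F\)-class could still split into many \(E\)-classes), so the naive union of ideals need not be ccc or \(\sigma\)-complete. The fix is presumably to not push the ideals forward at all, but rather to work on the \(X\) side throughout: use idealisticity to get, for the Borel \(F\)-invariant set \(B = [\operatorname{ran}(f)]_F\), a Borel map \(g \colon B \to X\) picking a point \(g(y)\) with \(f(g(y)) \mathrel{F} y\), chosen so that \(y \mapsto [g(y)]_E\) is constant on \(F\)-classes; then \(g\) and \(f\) witness the classwise Borel isomorphism. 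Verifying that such a \(g\) exists is where \eqref{eq:idealistic} and the Becker--Kechris-style selector theory do the real work, and getting the invariance of \([g(y)]_E\) along \(F\)-classes simultaneously with Borelness of \(g\) is the delicate combinatorial core; once that is in hand, the ``hence Borel bireducible'' parenthetical is immediate since classwise Borel isomorphism implies Borel bireducibility for Borel equivalence relations.
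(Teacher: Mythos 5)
Your structural outline is the right one: produce a Borel $F$-invariant set $B = [\operatorname{ran}(f)]_F$ and a classwise Borel inverse $g \colon B \to X$ of $f$, then observe that $F \restriction B$ is again a Borel orbit equivalence relation. But the step you flag as ``the delicate combinatorial core'' --- constructing a Borel $g$ with $f(g(y)) \mathrel{F} y$ and with $y \mapsto [g(y)]_E$ well-defined on $F$-classes, using only that $E$ is idealistic and $F$ is Borel --- is exactly a known nontrivial theorem, namely the Kechris--Macdonald lemma (for $E$ idealistic and $F$ Borel, $E \leq_B F$ implies $E \sqsubseteq_{cB} F$), and the paper simply cites it rather than reproving it. Your sketch (pull back a non-small subset via the ideal assignment, uniformize the fibers of $f$ by Lusin--Novikov) gestures at the sort of selector argument such a proof would use, but it is not carried out, and as written the argument has a genuine gap precisely where you say the ``real work'' happens; simply cite the lemma instead.

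Two secondary remarks. First, the hypothesis that $F$ be an \emph{orbit} equivalence relation is not needed to get $E \sqsubseteq_{cB} F$ --- that holds for any Borel $F$; it is used only at the very end, to ensure that $F \restriction B$ is itself a Borel orbit equivalence relation. Second, your first paragraph's attempt to push the ideals $I_C$ forward to ideals on $F$-classes is, as you yourself observe, problematic (ccc and $\sigma$-completeness need not survive), and it is also unnecessary: once $E \sqsubseteq_{cB} F$ is established, no ideal structure on the $Y$ side is ever required.
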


H.\ Friedman proved in~\cite[Theorem 1.5]{HFri} that if \( E \) is a Borel equivalence relation which is classifiable by countable structures (equivalently: \( E \) is Borel reducible to a Borel action of the infinite symmetric group \( S_\infty \)), then \( E \) is also Borel reducible to a \emph{Borel} orbit equivalence relation \( F \) --- in fact, \( F \) can be presented as an isomorphism relation among certain countable structures.  
Combining this with Proposition~\ref{prop:intro2} we immediately obtain:

\begin{corollary} \label{cor:intro2}
Let \( E \) be an idealistic Borel equivalence relation. If \( E \) is classifiable by countable structures, then \( E \) is classwise Borel isomorphic to (and hence Borel bireducible with) a Borel orbit equivalence relation.
\end{corollary}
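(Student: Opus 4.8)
The plan is to derive the corollary purely formally, by feeding the conclusion of H.\ Friedman's theorem into Proposition~\ref{prop:intro2}. Recall first that $E$ being classifiable by countable structures means exactly that $E \leq_B F$ for some orbit equivalence relation $F$ induced by a Borel action of $S_\infty$; equivalently, $E$ Borel reduces to an isomorphism relation $\cong_{\mathcal L}$ on the standard Borel space of countable $\mathcal L$-structures (for a suitable countable language $\mathcal L$) under the logic action. So, unwinding the hypothesis, $E$ already reduces to \emph{some} orbit equivalence relation of this form; the work that makes the corollary go through has been done in the two results quoted just above it.

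The first step I would take is to invoke~\cite[Theorem 1.5]{HFri}: since $E$ is Borel and classifiable by countable structures, there is a \emph{Borel} orbit equivalence relation $F$ with $E \leq_B F$, and moreover $F$ may be taken to be the isomorphism relation on an isomorphism-invariant Borel class of countable structures, hence the orbit equivalence relation of a Borel $S_\infty$-action. The reason for going through Friedman rather than merely the definition of classifiability is precisely that this $F$ is \emph{Borel}, which is the hypothesis demanded by Proposition~\ref{prop:intro2} (a general Borel action of $S_\infty$ can have non-Borel orbit equivalence relation, so the definition of classifiability alone would not suffice).

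The second step is to apply Proposition~\ref{prop:intro2} to this pair: $E$ is idealistic, $E$ is Borel, $F$ is a Borel orbit equivalence relation, and $E \leq_B F$. The proposition then yields that $E$ is classwise Borel isomorphic to a Borel orbit equivalence relation; since classwise Borel isomorphism refines Borel bireducibility, in particular $E$ is Borel bireducible with that same relation, which is exactly the claimed conclusion. I do not expect any genuine obstacle here — the statement is a formal consequence of the two cited results — and the only point warranting a line of care is checking that the reduction target produced by Friedman's theorem is literally a Borel \emph{orbit} equivalence relation (it is, being the isomorphism relation of a Borel class of countable structures under the logic action of $S_\infty$), so that Proposition~\ref{prop:intro2} applies with no further adjustment.
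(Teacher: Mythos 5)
Your proposal is correct and follows exactly the paper's route: invoke H.\ Friedman's theorem to obtain a \emph{Borel} orbit equivalence relation $F$ with $E \leq_B F$, then apply Proposition~\ref{prop:intro2}. Your remark that the Borelness of $F$ (not supplied by classifiability alone) is precisely why Friedman's result is needed matches the paper's reasoning.
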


Furthermore, we show that an apparently unnoticed combination of known results 
disproves both Conjecture~\ref{conj2intro} and its weakening mentioned at the very end of Section~\ref{subsec:introbackground}.

\begin{proposition} \label{thm:intro3}
There is a Borel equivalence relation \( E \) such that \( E_1 \nleq_B E \), yet \( E \) is not Borel bireducible with an idealistic
equivalence relation.
\end{proposition}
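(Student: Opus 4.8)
The plan is to exhibit a concrete Borel equivalence relation that is simultaneously too complicated to be idealistic (even up to Borel bireducibility) yet avoids $E_1$ below it. The natural candidate is a smooth equivalence relation without a Borel selector, or more robustly a countable equivalence relation of that kind, since by the cited result $E$ has a Borel selector if and only if $E$ is smooth and idealistic, so any smooth $E$ without a Borel selector is already non-idealistic while trivially satisfying $E_1 \nleq_B E$ (as $E_1$ is not even essentially countable, let alone smooth). The only real work is upgrading ``non-idealistic'' to ``not Borel bireducible with an idealistic equivalence relation,'' because idealisticity is not obviously invariant under $\sim_B$.

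First I would recall (or quote) that the property of having a Borel selector, for a \emph{smooth} Borel equivalence relation, is in fact a Borel-bireducibility invariant within the smooth equivalence relations: if $E \sim_B F$ with $E$, $F$ smooth and $F$ idealistic, then $F$ has a Borel selector, and pushing the selector back through the reductions (together with the fact that smoothness lets us uniformize everything) yields a Borel selector for $E$. Equivalently, one can invoke the known fact that among smooth equivalence relations the relation $=_{\R}$ (equality on the reals) is the unique $\sim_B$-class admitting a Borel selector among those with continuum many classes, so ``smooth with no Borel selector'' is a union of $\sim_B$-classes all of whose members are non-idealistic. Hence it suffices to produce a single smooth Borel equivalence relation $E$ on a standard Borel space with continuum many classes and no Borel selector; the classical example is the Vitali-type equivalence relation obtained from a Borel transversal-free selection, e.g.\ the orbit equivalence relation of the translation action of $\Q$ on $\R$ restricted so as to remain smooth, or more cleanly the equivalence relation $E_0^{\mathrm{ctble}}$ realized as the equality-of-tails relation presented via a Borel but non-selector-admitting coding — any of the standard witnesses to ``$\ZF$: there is a smooth Borel $E$ with no Borel selector'' works.

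Concretely, then, the steps are: (1) fix $E$ to be a smooth Borel equivalence relation with uncountably many classes and no Borel selector (cite the standard construction); (2) observe $E_1 \nleq_B E$ because $E$ is smooth whereas $E_1$ is not (indeed $E_1$ is not essentially countable and not smooth, by Kechris–Louveau); (3) suppose toward a contradiction that $E \sim_B F$ with $F$ idealistic, note $F$ is then smooth (smoothness is downward and upward closed along $\leq_B$ in the relevant direction — $E \leq_B F \leq_B {=_\R}$), so $F$ is smooth and idealistic, hence has a Borel selector by \cite[Theorem 2.4]{Kec92}; (4) transfer the Borel selector of $F$ back to $E$ along the Borel reductions witnessing $F \leq_B E$, using smoothness of $E$ to select a genuine point in each class, contradicting (1). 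The main obstacle is step (4): a Borel reduction $g \colon Y \to X$ witnessing $F \leq_B E$ need not have Borel image nor be injective on classes, so one cannot directly transport a transversal; the fix is to use that $E$ smooth means $X/E$ is standard Borel and a Borel selector is the same as a Borel map $X \to X$ picking a class representative, and to combine $g$ with the reduction $f \colon X \to Y$ in the other direction so that $g \circ f$ is a Borel map $X \to X$ respecting $E$, then correct it on each class using the selector for $F$ pulled along $f$ — this requires a small uniformization argument (Jankov–von Neumann or the Lusin–Novikov theorem, depending on whether one takes $E$ countable) which I would spell out but expect to be routine.
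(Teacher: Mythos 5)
Your strategy has a fatal flaw at a much earlier stage than you anticipate: the whole smooth-equivalence-relation route cannot work, even in principle. By Silver's dichotomy, every smooth Borel equivalence relation with uncountably many classes is Borel bireducible with \( \mathrm{id}_{\mathbb{R}} \) (smoothness gives \( E \leq_B \mathrm{id}_{\mathbb{R}} \), and uncountably many classes gives \( \mathrm{id}_{\mathbb{R}} \leq_B E \)), and \( \mathrm{id}_{\mathbb{R}} \) is an orbit equivalence relation, hence idealistic. So any smooth \( E \) with continuum many classes and no Borel selector, while indeed non-idealistic, \emph{is} Borel bireducible with an idealistic equivalence relation — precisely the negation of what the proposition requires. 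Your step (3), asserting that ``smooth with no Borel selector'' is a union of \( \sim_B \)-classes all of whose members are non-idealistic, is exactly where the argument breaks: having a Borel selector is not a \( \sim_B \)-invariant among smooth relations; all smooth relations with continuum many classes lie in a \emph{single} \( \sim_B \)-class, which contains \( \mathrm{id}_{\mathbb{R}} \). Consequently step (4), the ``transfer the Borel selector back'' step you flag as the main obstacle, is not merely technically delicate; it is provably impossible to fix, because the conclusion it would produce is false.

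The paper's proof therefore has to use a qualitatively different example. It takes Hjorth's \( \EH \) (Proposition~\ref{prop:Hjoessctbl}): a Borel equivalence relation reducible to some countable Borel equivalence relation \( F \), yet not Borel bireducible with \emph{any} countable Borel equivalence relation. Since \( F \) is idealistic, \( E_1 \leq_B \EH \leq_B F \) would contradict the Kechris--Louveau obstruction, so \( E_1 \nleq_B \EH \). And if \( \EH \sim_B E' \) for some idealistic \( E' \), then \( E' \leq_B F \) upgrades to \( E' \sqsubseteq_{cB} F \) by Kechris--Macdonald (Proposition~\ref{prop:kechrismacdonald}), so \( E' \simeq_{cB} F' \) for \( F' \) the restriction of \( F \) to a Borel invariant set; but \( F' \) is again a countable Borel equivalence relation and \( \EH \sim_B F' \), contradicting Hjorth's theorem. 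The essential ingredients your proposal is missing are (a) a witness whose distance from idealistic survives passage to the entire \( \sim_B \)-class, which smooth relations cannot supply, and (b) the classwise-Borel-embeddability machinery that lets one turn ``\( E' \leq_B F \)'' into ``\( E' \) is literally a restriction of \( F \),'' which is what makes the contradiction with Hjorth's result go through.
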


This easy observation prompts us to revise the lists of problems and conjectures originally proposed by Hjorth, Kechris, and Louveau over the years (see Section~\ref{sec:questions}, and in particular Conjecture~\ref{conj3intro}). 

\subsection{Organization of the paper} 

In Section~\ref{sec:idealistic} we revise some known results and definitions, and show how to combine them to obtain some easy yet novel structural results concerning idealistic equivalence relations. In particular, we provide a new sufficient condition for being idealistic (Proposition~\ref{prop:select}); we prove Proposition~\ref{prop:intro2} (see Corollary~\ref{cor:Borelidealistic}); we prove Proposition~\ref{thm:intro3}, thus disproving Conjecture~\ref{conj2intro}, using a certain Borel equivalence relation \( \EH \) isolated by Hjorth (Proposition~\ref{prop:Hjoessctbl}); we observe that \( \EH \) also shows that many interesting classes of analytic equivalence relations are not downward closed under Borel reducibility (Corollary~\ref{cor:hjorth}). 

Section~\ref{sec:Beckertutto} is instead devoted to the proof of the main result of the paper, namely Theorem~\ref{thm:intro1}. This is obtained by applying a very natural construction (Theorem~\ref{thm : main2}) to a specific example, denoted by \( \E \) in this paper, of a member of \( \mathcal{I} \). The definition of \( \E \), which is due to \Bornot{Becker}{the first author}, is strictly related to the isomorphism relation on \( p \)-groups (Sections~\ref{subsec:p-groups} and~\ref{subsec:maindefinition}), and
the proof that, under \(\boldsymbol{\Sigma}^1_1\)-determinacy, \( \E \in \mathcal{I} \) (Section~\ref{sec:Becker}) builds on the unpublished notes~\cite{Bec}. Quite interestingly, such proof involves Steel's technique of forcing with tagged trees. Since the reader may not be familiar with this method, we include an almost
self-contained presentation of Steel's forcing in Appendix~\ref{appendix}.

Finally, Section~\ref{sec:questions} discusses some fundamental questions and open problems concerning the structure of idealistic equivalence relations.

\section{Idealistic equivalence relations} \label{sec:idealistic}

Recall from Section~\ref{subsec:introbackground} the definition of an idealistic equivalence relation. Here are some examples from ~\cite[Section~1.II]{Kec92}.

\begin{example} \label{xmp:idealistic}
\begin{enumerate-(a)}
\item \label{xmp:idealistic-a}
All orbit equivalence relations are idealistic. In fact, if
\(E_a\) is induced by the Borel action \(a\colon G\times X \to X\) we can assign a ccc \(\sigma\)-ideal \(I_C\) to each \(C =[x]_{E_a}\in X/{E_a}\) by
\[
A \in I_C\quad \iff \quad \text{\(\{
g\in G : a(g,x) \in A
\}\) is meager}.
\]
The assignment does not depend on the choice of \(x \in C\).
 By the Feldman-Moore theorem, this includes all countable Borel equivalence relations.
\item \label{xmp:idealistic-b}
Let \(E\) be a Borel equivalence relation on a standard Borel space \(X\), and let \( x\mapsto \mu_x \) be a Borel map from \(X\) into the standard Borel space \( P(X) \) of all probability measures on \(X\) such that \(\mu_x([x]_E)=1\) and \(x\mathrel{E} y \Rightarrow \mu_x\sim \mu_y\). Then \(E\) is idealistic.
\end{enumerate-(a)}    
\end{example}

Of course not all equivalence relations are idealistic --- there are (Borel) equivalence relations which are not even Borel reducible to an idealistic equivalence relation. Indeed, a typical obstruction to \(E\) being idealistic is the following (\cite[Theorem 4.1]{KecLou97}), where \(E_1\) denotes the Borel equivalence relation on \(\mathbb{R}^\mathbb{N}\) defined by 
\[
(x_n)_{n\in \mathbb{N}} \mathrel{E_1} (y_n)_{n\in\mathbb{N}} \iff \exists n \forall m \geq n \, (x_m = y_m).
\] 

\begin{theorem}[Kechris-Louveau]
\label{thm : idealistic obstruction}
If \(E\) is an idealistic Borel equivalence relation, then \(E_1 \nleq_B E\).
\end{theorem}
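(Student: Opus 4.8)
The plan is to argue by contradiction, pushing the hypersmooth ``tower'' that builds up $E_1$ through the reduction and into the ccc $\sigma$-ideal attached to a single $E$-class. So suppose $f \colon X \to Y$ is a Borel reduction of $E_1$ to an idealistic Borel equivalence relation $E$ on a standard Borel space $Y$ with Borel assignment $C \mapsto I_C$, where it is convenient to take $X = (2^{\N})^{\N}$ (which carries $E_1$ up to Borel isomorphism). Recall the natural presentation $E_1 = \bigcup_{N \in \N} F_N$, where $x \mathrel{F_N} y$ iff $x_m = y_m$ for all $m \geq N$: each $F_N$ is smooth, the $F_N$-class of $x$ is canonically identified with $(2^{\N})^{N}$, and under these identifications $[x]_{F_N}$ sits inside $[x]_{F_{N+1}} \cong (2^{\N})^{N+1}$ as the ``hyperplane'' slice $\{w : w_N = x_N\}$, which is closed, nowhere dense, and of measure zero for the product measure. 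Since $f$ is a reduction, it maps each $E_1$-class into a single $E$-class and maps $E_1$-inequivalent points into distinct, hence disjoint, $E$-classes. Using the Becker--Kechris change-of-topology theorem I would first refine the Borel structures of $X$ and $Y$ to finer Polish topologies, with the same Borel sets, so that $f$ becomes continuous; this makes the combinatorics below manageable.

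The heart of the argument is a fusion construction carried out inside one $E$-class. Fix a point $z_0 \in X$, set $C := [f(z_0)]_E$ and $I := I_C$, so that $I$ is a ccc $\sigma$-ideal on $C$ with $C \notin I$. The goal is to manufacture a point $z \in X$ that differs from $z_0$ in infinitely many coordinates --- so that $(z,z_0) \notin E_1$ --- while still $f(z) \in C$; this contradicts the fact that $f$ separates $E_1$-classes. One builds $z$ as the limit of a sequence $z_0, z_1, z_2, \dots$ in which $z_{k+1}$ is obtained from $z_k$ by changing a single new, very large coordinate $n_k$ (to a value $\neq (z_k)_{n_k}$), while simultaneously maintaining a decreasing sequence of $I$-positive Borel subsets of $C$ that bookkeep ``the $f$-images of those extensions still compatible with landing in $C$''. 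Two ingredients drive the recursion: first, the Borelness of $C \mapsto I_C$, which keeps all the auxiliary sets Borel and hence $I$-measurable; second, a Fubini / Kuratowski--Ulam-type principle for $I$, used to pass from ``$I$-positively many extensions survive at stage $k$'' to ``$I$-positively many survive after fixing coordinate $n_k$'' (here the freshly added coordinate contributes the meager ideal on $2^{\N}$, which does satisfy Kuratowski--Ulam). Iterating through all $N \in \N$ and using that $[z_0]_{E_1} = \bigcup_N [z_0]_{F_N}$ is an \emph{increasing} union of sets that are ``thin'' in this product sense, one arrives at a surviving $z$ with $f(z) \in C$, the desired contradiction. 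A variant phrases the same contradiction more directly: the construction shows that the $f$-pullback of $I_C$ would be a ccc $\sigma$-ideal on $[z_0]_{E_1}$ in which each $F_N$-class is small, forcing $[z_0]_{E_1}$ itself to be small, which it is not.

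The step I expect to be the main obstacle is extracting a usable Fubini-type theorem from the bare assumption that each $I_C$ is a ccc $\sigma$-ideal with a Borel assignment: arbitrary ccc $\sigma$-ideals need not satisfy Kuratowski--Ulam, so one must either exploit the definability of the assignment to obtain completeness of the quotient algebra $\operatorname{Borel}(C)/I_C$ together with a localization lemma, or organize the induction so that the only products occurring are with the meager ideal on $2^{\N}$. A secondary subtlety is that $f$ need not be injective, so the bookkeeping has to be carried out with $f$-images of Borel sets --- which are merely analytic --- rather than with the Borel sets themselves; this is precisely where the \emph{ccc}-ness of $I_C$, and not just its $\sigma$-additivity, is genuinely needed.
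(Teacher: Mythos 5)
The paper does not reproduce the Kechris--Louveau argument (it simply cites \cite[Theorem~4.1]{KecLou97}), so there is no ``paper proof'' to match your proposal against; but the proposal as written has a gap more fundamental than the Fubini-type obstacle you flag. Your plan is to produce a point $z$ that differs from $z_0$ in infinitely many coordinates (so $z \not\mathrel{E_1} z_0$) while still satisfying $f(z) \in C := [f(z_0)]_E$, and to read this off as a contradiction. But since $f$ is a reduction, $f^{-1}(C) = [z_0]_{E_1}$ exactly; the two demands on $z$ are directly contradictory, so \emph{no} construction can deliver such a $z$. This means the fusion you describe must break down somewhere, and it does: the bookkeeping sets $B_k$ are $f$-images of the survivors ``still compatible with landing in $C$''. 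Once you fix coordinate $n_k$ to a value $\neq (z_0)_{n_k}$, the only survivors with $f$-image in $C$ are those $w \mathrel{E_1} z_0$ whose eventual agreement with $z_0$ begins \emph{after} $n_k$; these exist at every finite stage, so the $B_k$ can stay nonempty, but a point $y_\infty \in \bigcap_k B_k$ is merely $f(w_k)$ for \emph{different} witnesses $w_k$ at each stage, and the $w_k$ need not converge. There is no limit $z$ with $f(z) = y_\infty$ satisfying all the constraints, because the $f$-image of an intersection is not the intersection of $f$-images. In short, the fusion produces a point of $C$, not a point of $X$, and so it never contradicts anything.

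Two further issues: there is no reason the initial set $f([z_0]_{E_1}) \cap C$ is $I_C$-positive (the reduction need not be surjective onto $C$, and its image could already lie in the ideal), so the induction may fail at stage $0$; and your ``variant'' phrasing asserts without justification that each $F_N$-class is small in the pullback of $I_C$, which does not follow from the hypotheses. The Kechris--Louveau argument does indeed exploit the hypersmooth tower $E_1 = \bigcup_N F_N$ together with the idealistic structure, so your opening paragraph is aimed in a sensible direction, but a correct proof cannot aim to build a single ``bad'' $z$: it has to convert the family of Borel selectors $s_N$ for the $F_N$ and the Borel assignment $C \mapsto I_C$ into a \emph{definable} invariant that is incompatible with the known combinatorics of $E_1$ (for instance, that it is not smooth and not reducible to $E_0$-like relations on any comeager set), rather than into a pointwise counterexample.
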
 

Thus \( E_1 \nleq_B E \) is a necessary condition for \( E \) being idealistic. 
The following fact, abstracted from an argument contained in~\cite{Bec}, provides instead a sufficient condition, and will be used in Section~\ref{sec:Beckertutto} to obtain new examples of idealistic analytic equivalence relations.

\begin{definition} \label{def:select}
Suppose that \( F' \subseteq F \) are analytic equivalence relations on the same set \(Z\). If \(\theta\colon Z \to Z\) is a homomorphism from \(F\) to \(F'\) such that \({\theta(z)}\mathrel{F}z\)
for all \(z \in Z\), then we say that \(\theta\) \markdef{selects an \(F'\)-class within each \(F\)-class}. (Of
course, this implies that \(\theta\) is a reduction from \(F\) to \(F'\).)
\end{definition}

\begin{proposition} \label{prop:select} 
Let \( E \) be an orbit equivalence relation induced by a Borel action \( a \colon G \times Z \to Z \) of a Polish group \( G \) on a standard Borel space \( Z \), and let \( F \supseteq E \) be any equivalence relation on \( Z \). If there is a Borel map \(\theta \colon Z\to Z\) selecting an \(E \)-class within every \( F \)-class, then \( F \) is idealistic. 
\end{proposition}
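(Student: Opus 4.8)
The strategy is to transport the canonical ideal-assignment available on the orbit equivalence relation $E$ (as in Example~\ref{xmp:idealistic}\ref{xmp:idealistic-a}) along the selector $\theta$ to obtain an ideal-assignment on $F$. For each $F$-class $D$, pick (in a uniform way) the $E$-class $C_D = [\theta(z)]_E$ for any $z \in D$; this is well-defined because $\theta$ is a homomorphism from $F$ to $E$, so $z \mathrel{F} z'$ implies $\theta(z) \mathrel{E} \theta(z')$. Then define $I_D$ to be the $\sigma$-ideal on $D$ consisting of those $B \subseteq D$ such that $B \cap C_D \in I_{C_D}$, where $I_{C_D}$ is the meager ideal on $C_D$ coming from the orbit structure, i.e. $\{ g \in G : a(g, \theta(z)) \in B \}$ is meager for some (equivalently every) $z \in D$. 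Since $C_D \subseteq D$ (because $\theta(z) \mathrel{F} z$ and $E \subseteq F$), and $C_D \notin I_{C_D}$, we get $D \notin I_D$; and $I_D$ is a nontrivial ccc $\sigma$-ideal on $D$ because $I_{C_D}$ is.

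The substantive point is Borelness of the assignment $D \mapsto I_D$, i.e. that for every Borel $A \subseteq Z^2$ the set $A_I = \{ z : \{ w \in [z]_F : (z,w) \in A \} \in I_{[z]_F} \}$ is Borel. First I would unwind the definition: $z \in A_I$ iff $\{ w \in C_{[z]_F} : (z,w) \in A \} \in I_{C_{[z]_F}}$, which, using the orbit description of $I_{C_{[z]_F}}$ and the fact that $C_{[z]_F} = [\theta(z)]_E = \{ a(g,\theta(z)) : g \in G \}$, translates to
\[
\{ g \in G : (z, a(g, \theta(z))) \in A \} \text{ is meager in } G.
\]
Now consider the Borel set $A' \subseteq Z^2$ defined by $(z, w) \in A' \iff (z, a(\cdot\,, \theta(z))\text{-image meager condition})$ — more precisely, set $B = \{ (z, g) \in Z \times G : (z, a(g,\theta(z))) \in A \}$, which is Borel since $a$ and $\theta$ are Borel. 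Then $z \in A_I$ iff the section $B_z = \{ g : (z,g) \in B \}$ is meager in $G$. By the Kuratowski--Ulam / Montgomery--Novikov theorem on Borel-measurability of the "category quantifier," the set $\{ z : B_z \text{ is meager} \}$ is Borel, which is exactly $A_I$. Hence $A_I$ is Borel, so $F$ is idealistic.

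The main obstacle, and the place to be careful, is verifying that the ideal-assignment is genuinely well-defined independently of representatives and that the reduction to the orbit-case ideal is valid at the level of the sections appearing in~\eqref{eq:idealistic}: one must check that replacing the $F$-class $[z]_F$ by the $E$-subclass $[\theta(z)]_E$ does not change membership in the ideal, which is immediate from the definition of $I_D$ but needs to be spelled out, and that the composite map $(z,g) \mapsto (z, a(g,\theta(z)))$ is Borel so that pulling back $A$ keeps everything Borel. Everything else — the ccc and $\sigma$-ideal properties, nontriviality, $D \notin I_D$ — is inherited directly from the orbit ideal $I_{C_D}$ via the bijection-like relationship between $C_D$ and its $G$-parametrization, and the final Borelness is a black-box application of the standard fact that the category quantifier preserves Borelness.
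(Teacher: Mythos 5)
Your proposal is correct and follows essentially the same route as the paper: transport the canonical orbit ideal along $\theta$ by restricting to the selected $E$-class $[\theta(z)]_E$ inside each $F$-class, and conclude Borelness of the assignment from the fact that the category quantifier preserves Borel sets. The only point you leave somewhat implicit is the ccc verification, which the paper spells out via the antichain argument (pairwise disjoint non-small sets in the $F$-class restrict to pairwise disjoint nonempty non-small sets in the $E$-class), but the idea you gesture at is exactly that.
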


\begin{proof}
Let \(B\mapsto I_B^*\) be the Borel assignment taking any \(G\)-orbit \(B\) to the ccc ideal on \(B\) induced by the meager ideal on \(G\). (For more details see Example~\ref{xmp:idealistic}\ref{xmp:idealistic-a}.)
For any \(F\)-equivalence class \( C \), let
\[
I_C = \left\{D\subseteq C \mid D \cap [\theta(C)]_E \in I^*_{[\theta(C)]_E}\right\}.
\]
We claim that the map \( C \mapsto I_C \) witnesses that \( F \) is idealistic.
First note that, for any \(C\in Z/F\), the corresponding ideal \(I_C\) is ccc. In fact, every antichain \(\{D_j: j\in J\}\) of pairwise disjoint (nonempty) subsets of \(C\) with \(D_j\notin I_C\) induces the family \(\{D_j\cap [\theta(C)]_E: j\in J\}\) of pairwise disjoint subsets of \([\theta(C)]_E\) that are not members of \(I^{*}_{[\theta(C)]_E}\).
It follows that each set \(D_j\cap [\theta(C)]_E\) is necessarily nonempty. Therefore, \(J\) must be a countable set because the ideal \(I^*_{[\theta(C)]_{E}}\) is ccc.

Moreover, for any Borel \(A\subseteq Z\times Z\), the set \(A_I\) from~\eqref{eq:idealistic} is Borel because for every \( z \in Z \)
\begin{align*}
z\in A_I\quad \iff & \quad \{ y\in [z]_F \mid (z,y)\in A \} \in I_{[z]_F}\\
\iff & \quad \{ y\in [\theta(z)]_{E} \mid (z,y)\in A \} \in I^*_{[\theta(z)]_{E}}\\
\iff &\quad (z, a(g,\theta(z)))\notin A \text{ for comeagerly many \(g\in G\)}. \qedhere
\end{align*}
\end{proof}

Another ingredient which will be crucial in our analysis is given by the notions of classwise Borel isomorphism and classwise Borel embeddability, that were implicitly introduced in~\cite{FriMot} and made explicit in~\cite[Definition 2.1 and 2.2]{Mot12}. 

\begin{definition} \label{def:classwise}
Given two analytic equivalence relations \(E, F\) on standard Borel
spaces \(X, Y\), respectively. We say that \(E \) is \markdef{classwise Borel isomorphic} to \(F\), in symbols \(E\simeq_{cB}F\), if 
if there are Borel reductions \(f\colon X \to Y\) and \(g\colon Y \to X\)
of \(E\) into \(F\) and \(F\) into \(E\), respectively, such that their factorings to the quotient
spaces \(\hat f\colon X/E \to Y /F\) and \(\hat g \colon Y /F \to X/E\) are bijections and \(\hat f^{-1} = \hat g\). In this case, we say that \(g\) is a \markdef{classwise Borel inverse} of \(f\).

Moreover, we say that \(E\) is \markdef{classwise Borel embeddable} into \(F\), in symbols \(E \sqsubseteq_{cB} F\), if there is a Borel \(F\)-invariant subset \(A \subseteq Y\) such that
\(E \simeq_{cB} F \restriction A\).
\end{definition}

Classwise Borel embeddability lies in between the classical notions of Borel reducibility \( \leq_B \) and Borel invariant embeddability \( \sqsubseteq^i \) (i.e.\ injective Borel reducibility with invariant range) appeared as early as in \cite{DouJacKec}.
It is clear that \( {E \sqsubseteq^i F} \Rightarrow {E \sqsubseteq_{cB} F} \) and \( {E \sqsubseteq_{cB} F} \Rightarrow {E \leq_B F} \).
However, such reducibility notions are different from each other. For example, while graph isomorphisms Borel reduces to isomorphism on countable linear orders (\cite[Theorem 3]{FriSta}), it does not classwise Borel embed into it (see~\cite[Theorem 4]{Gao01}). Moreover, in~\cite[Theorem 2.10]{Mot12} it is shown that for every orbit equivalence relation \( E \) with perfectly many classes, including the Borel ones, there is \( F \sim_B E\) such that \( F \not\sqsubseteq_{cB} E \); by a result of Kechris and Macdonald such an \( F \) cannot be idealistic when \( E \) is Borel (see Proposition~\ref{prop:kechrismacdonald}).
On the other hand, Borel invariant embeddability is strictly finer then classwise Borel embeddability: this is because  \( \sqsubseteq^i\) also imposes cardinality restrictions on the size of the equivalence classes, while  \( \sqsubseteq_{cB} \) does not. Similar considerations can be made about Borel bireducibility, classwise Borel isomorphism, and Borel isomorphism.

Now we discuss a few structural results concerning \( \simeq_{cB} \) and \( \sqsubseteq_{cB} \) that are relevant to this paper. First of all, in contrast to what happens with classical Borel reducibility, we have a Cantor-Schr\"oder-Bernstein theorem (\cite[Proposition 2.3]{Mot12}):

\begin{fact} \label{fact:CantorShroederBernstein}
For all analytic equivalence relations \( E \) and \( F \),
\[
E \simeq_{cB} F \iff E \sqsubseteq_{cB} F \wedge F \sqsubseteq_{cB} E.
\]
\end{fact}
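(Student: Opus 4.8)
The forward implication is immediate: if $E \simeq_{cB} F$ then taking the $F$-invariant Borel set $A=Y$ witnesses $E \sqsubseteq_{cB} F$, and symmetrically $F \sqsubseteq_{cB} E$. So the content is the converse, and the plan is to mimic the classical Cantor--Schr\"oder--Bernstein back-and-forth on the quotients $X/E$ and $Y/F$, while checking that every set it produces stays Borel. Assume $E \sqsubseteq_{cB} F$ and $F \sqsubseteq_{cB} E$. Unwinding Definition~\ref{def:classwise}, fix a Borel $F$-invariant $A \subseteq Y$ together with Borel maps $f_1 \colon X \to A$ and $f_2 \colon A \to X$ witnessing $E \simeq_{cB} F \restriction A$ (so $\hat{f_1}$ and $\hat{f_2}$ are mutually inverse bijections between $X/E$ and $A/(F\restriction A)$), and symmetrically a Borel $E$-invariant $B \subseteq X$ with Borel maps $g_1 \colon Y \to B$ and $g_2 \colon B \to Y$ witnessing $F \simeq_{cB} E \restriction B$. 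On the quotients these induce an injection $\phi := \hat{f_1} \colon X/E \to Y/F$ whose range is exactly the set of $F$-classes included in $A$, and an injection $\psi := \hat{g_1} \colon Y/F \to X/E$ whose range is the set of $E$-classes included in $B$; the partial inverses of $\phi$ and $\psi$ on their ranges are $\hat{f_2}$ and $\hat{g_2}$.

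Next I would run the usual recursion: put $\hat C_0 := (X/E) \setminus \psi[Y/F]$ (the classes contained in $X\setminus B$), $\hat C_{n+1} := \psi[\phi[\hat C_n]]$, and $\hat C := \bigcup_n \hat C_n$. The classical argument shows that $h \colon X/E \to Y/F$ defined by $h(p)=\phi(p)$ for $p\in\hat C$ and $h(p)=\psi^{-1}(p)$ for $p\notin\hat C$ is a well-defined bijection, with $h^{-1}(q)=\hat{f_2}(q)$ for $q\in\phi[\hat C]$ and $h^{-1}(q)=\psi(q)$ for $q\notin\phi[\hat C]$. I would then set $h_1 \colon X \to Y$ to be $f_1(x)$ if $[x]_E\in\hat C$ and $g_2(x)$ otherwise, and $h_2 \colon Y \to X$ to be $f_2(y)$ if $[y]_F\in\phi[\hat C]$ and $g_1(y)$ otherwise; these are total (e.g.\ $[x]_E\notin\hat C$ forces $x\in B=\dom(g_2)$, since $\hat C\supseteq\hat C_0$), and by construction $\hat{h_1}=h$ and $\hat{h_2}=h^{-1}$. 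Granting that $h_1,h_2$ are Borel, they are Borel reductions: each is a Borel reduction on each of its two pieces, and it respects the ``seam'' because $h$ is injective (if $[x]_E\in\hat C$ and $[x']_E\notin\hat C$, then $[x]_E\neq[x']_E$ and $h([x]_E)\neq h([x']_E)$, i.e.\ $\lnot(x\mathrel{E}x')$ and $\lnot(h_1(x)\mathrel{F}h_1(x'))$). Hence $h_1,h_2$ witness $E \simeq_{cB} F$.

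It remains to verify that $C^* := \{x\in X : [x]_E\in\hat C\}$ and its counterpart $\{y\in Y : [y]_F\in\phi[\hat C]\}$ are Borel (they are automatically invariant). The engine is the identity
\[
[f_1(S)]_F = f_2^{-1}(S) \qquad \text{for every $E$-invariant $S\subseteq X$,}
\]
together with its mirror image for $g_1,g_2$; the identity follows from $\hat{f_1}^{-1}=\hat{f_2}$ and $F$-invariance of $A$. This is precisely the point where the hypothesis uses the classwise Borel \emph{inverses} and not merely the reductions: in general $[f_1(S)]_F$ is only analytic. Using the identity one gets $\{y : [y]_F\in\phi[\hat C_n]\} = f_2^{-1}(C^*_n)$, where $C^*_n := \{x : [x]_E\in\hat C_n\}$, and therefore
\[
C^*_0 = X\setminus B, \qquad C^*_{n+1} = B\cap (f_2\circ g_2)^{-1}(C^*_n);
\]
so each $C^*_n$ is Borel by induction, whence $C^* = \bigcup_n C^*_n$ is Borel, and likewise $\{y : [y]_F\in\phi[\hat C]\} = f_2^{-1}(C^*)$ is Borel. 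The main obstacle is thus not the set-theoretic Cantor--Schr\"oder--Bernstein combinatorics but exactly this bookkeeping, namely ensuring that every set arising in the recursion remains Borel and invariant — and that is what the classwise Borel inverses $f_2,g_2$ buy us.
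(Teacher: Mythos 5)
The paper does not prove this fact itself; it simply cites \cite[Proposition~2.3]{Mot12}, so there is no in-text argument to compare against. Your proof, read on its own, is correct and complete. The two genuine issues in transporting Cantor--Schr\"oder--Bernstein to this setting are (i) that the set $C^*$ arising from the recursion must remain Borel, and (ii) that the piecewise-defined maps $h_1,h_2$ must still be reductions across the seam. You handle both. The crux is the identity $[f_1(S)]_F = f_2^{-1}(S)$ for $E$-invariant $S$ (and its mirror for $g_1,g_2$), which turns the a priori analytic images $\psi[\phi[\hat C_n]]$ into Borel preimages $B \cap g_2^{-1}\bigl(f_2^{-1}(C^*_n)\bigr)$; this is exactly where the classwise Borel \emph{inverses}, rather than mere Borel reductions, are indispensable, and you say so explicitly. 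Your verification that the seam poses no problem (each piece is $E$-invariant, $h_1,h_2$ are homomorphisms piecewise, and injectivity of $h$ on the quotient gives the reverse implication) is also the right argument. One cosmetic point: in the recursion $C^*_{n+1} = B\cap (f_2\circ g_2)^{-1}(C^*_n)$, the composition $f_2\circ g_2$ is a partial Borel function with domain $\{x\in B : g_2(x)\in A\}$; your formula is correct once this is understood, but spelling it out as $C^*_{n+1} = \{x \in B : g_2(x)\in A \wedge f_2(g_2(x))\in C^*_n\}$ avoids any ambiguity.
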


Next, we have the following important result (\cite[Lemma 3.8]{KecMac}):

\begin{proposition}[Kechris-Macdonald] \label{prop:kechrismacdonald}
If \( E \) is an idealistic equivalence relation and \( F \) is a Borel equivalence relation, then
\[  
E \leq_B F \iff E \sqsubseteq_{cB} F.
\]
\end{proposition}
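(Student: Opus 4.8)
Plan for proving Proposition~\ref{prop:kechrismacdonald} (Kechris–Macdonald).

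The plan is to prove the nontrivial direction, namely that if $E$ is idealistic, $F$ is Borel, and $E \leq_B F$ via a Borel reduction $f \colon X \to Y$, then $E \sqsubseteq_{cB} F$; the converse implication is immediate from the general fact that $\sqsubseteq_{cB}$ refines $\leq_B$, noted right after Definition~\ref{def:classwise}. So fix such an $f$, together with a Borel assignment $C \mapsto I_C$ of nontrivial ccc $\sigma$-ideals witnessing that $E$ is idealistic. The first step is to replace $Y$ by an $F$-invariant Borel subset: let $Y_0 = [f(X)]_F = \{ y \in Y : \exists x\, (f(x) \mathrel F y)\}$. Since $F$ is Borel (in particular its classes are Borel and the saturation of an analytic set is analytic) we need $Y_0$ to actually be Borel; here is where I expect to lean on the hypothesis that $F$ is Borel rather than merely analytic, possibly by first passing to the range more carefully. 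Replacing $F$ by $F \restriction Y_0$, we may assume $f$ has $F$-cofinal range, and it remains to manufacture a classwise Borel inverse $g \colon Y \to X$ of $f$, i.e.\ a Borel reduction of $F$ to $E$ such that $\hat g = \hat f^{-1}$; by Fact~\ref{fact:CantorShroederBernstein} (Cantor–Schröder–Bernstein for $\sqsubseteq_{cB}$) this suffices, although in fact $f$ itself together with such a $g$ directly witnesses $E \simeq_{cB} F\restriction Y_0$, hence $E \sqsubseteq_{cB} F$.

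The heart of the argument is the construction of $g$, and the idea is to use the ideals to make a ``definable choice'' of an $E$-class inside each $F$-class that the reduction $f$ produces. For $y \in Y$, the set $f^{-1}([y]_F) \subseteq X$ is a Borel $E$-invariant set, and because $f$ reduces $E$ to $F$ it is a single $E$-class, say $[x_y]_E$ — wait, more precisely it is $E$-invariant but a priori a union of $E$-classes all mapped into $[y]_F$; since $f$ is a reduction, $f(x) \mathrel F f(x')$ holds for $x,x'$ in this set, but that does not force $x \mathrel E x'$. So the genuinely new point is: the preimage of an $F$-class under a reduction need not be a single $E$-class, and we must select one. The plan is to fix a countable Borel separating family $\{A_n\}$ for the Borel sets on $X$ (e.g.\ from a Borel isomorphism $X \cong 2^{\mathbb N}$), and for each $y$ use the ideal on the relevant $E$-classes to pick out a canonical one: run through the $A_n$'s and, using the Borelness of the maps $A \mapsto A_I$, at stage $n$ decide whether the ``surviving'' part of $f^{-1}([y]_F)$ meets $A_n$ on an $I$-large set, thereby driving down to a single $E$-class $[x_y]_E$ in the style of a Lusin–Novikov / uniformization argument; then set $g(y)$ to be (a Borel choice of) a point of that class. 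One must check (i) $g$ is Borel — this uses repeatedly that $A \mapsto A_I$ is Borel and that ccc $\sigma$-ideals behave well under the relevant countable operations; (ii) $g$ reduces $F$ to $E$ — if $y \mathrel F y'$ the selection procedure returns the same $E$-class, if $y \not\mathrel F y'$ the classes $f^{-1}([y]_F)$ and $f^{-1}([y']_F)$ are disjoint so the chosen $E$-classes differ; (iii) $\hat g \circ \hat f = \mathrm{id}_{X/E}$ and $\hat f \circ \hat g = \mathrm{id}_{Y_0/F}$ — the first because $f(x) \mathrel F f(x')$ implies the procedure, applied to $y = f(x)$, selects the $E$-class of $x$ back (this forces us to arrange the selection so that when $f^{-1}([y]_F)$ happens to be a single $E$-class it returns that class, which it will), and the second because the range of $f$ was arranged to be $F$-cofinal.

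The main obstacle, as flagged above, is step (ii)/(iii) of the selection: guaranteeing that the $I$-driven choice of an $E$-class inside $f^{-1}([y]_F)$ is (a) well-defined as a single class, which requires the separating family to ``resolve'' the class while the intermediate sets stay $I$-positive — here I would need that if $D \notin I_C$ then for each $n$ at least one of $D \cap A_n$, $D \setminus A_n$ is $\notin I_C$, which is exactly the $\sigma$-ideal property — and (b) genuinely $F$-invariant, which hinges on the assignment $C \mapsto I_C$ depending only on the class, not on representatives, so that the computation at $y$ and at $y' \mathrel F y'$ literally concerns the same ideal on the same set of $E$-classes. A secondary technical point is the Borelness of "the $n$-th bit of the selection", which I expect to follow by a straightforward induction using that each stage applies a Borel operation $A \mapsto A_I$ to a set built from earlier Borel data; ccc is what keeps the bookkeeping countable and hence within the Borel hierarchy. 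Once $g$ is in hand, the proposition follows by assembling $f$ and $g$ into a classwise Borel isomorphism $E \simeq_{cB} F \restriction Y_0$ and invoking the definition of $\sqsubseteq_{cB}$.
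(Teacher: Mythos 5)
The paper itself gives no proof of this proposition: it is quoted directly from Kechris--Macdonald (their Lemma~3.8), so there is nothing in-paper to compare against. Reviewing your plan on its own merits, there is a genuine conceptual error at its centre. You claim that $f^{-1}([y]_F)$ is ``a priori a union of $E$-classes'' and that ``$f(x) \mathrel F f(x')$ ... does not force $x \mathrel E x'$,'' and you then take as the ``genuinely new point'' the task of selecting one $E$-class inside that preimage. This is false: a Borel \emph{reduction} satisfies the biconditional $x \mathrel E x' \iff f(x) \mathrel F f(x')$, so $f(x), f(x') \in [y]_F$ gives $f(x) \mathrel F f(x')$, which by the \emph{backward} implication gives $x \mathrel E x'$. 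Hence $f^{-1}([y]_F)$ is always empty or a single $E$-class (you appear to be reasoning as if $f$ were merely a homomorphism). The whole separating-family, ideal-driven scheme for ``choosing an $E$-class'' is therefore aimed at a non-problem and does not constitute progress toward the statement.

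The actual difficulties, which your sketch either flags without resolving or conflates with the spurious one, are: (a) the set $[f(X)]_F$, being the $F$-saturation of the analytic set $f(X)$, is a priori only $\boldsymbol{\Sigma}^1_1$, while $\sqsubseteq_{cB}$ requires a \emph{Borel} $F$-invariant set; establishing Borelness here is precisely where the idealistic hypothesis on $E$ must do work, and it cannot be waved off as something to ``lean on'' later; and (b) even knowing that each nonempty fiber $f^{-1}([y]_F)$ is one (possibly uncountable) $E$-class $C_y$, one still needs a \emph{Borel} map $y \mapsto g(y) \in C_y$ -- a genuine uniformization problem to which Luzin--Novikov does not apply, and which again requires the ideal assignment $C \mapsto I_C$, but used to isolate a \emph{point} within a fixed class, not to isolate a class. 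As written, the proposal contains a correct mechanism for neither (a) nor (b).
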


Proposition~\ref{prop:intro2} follows from part~\ref{cor:Borelidealistic-2} of the next corollary.

\begin{corollary} \label{cor:Borelidealistic}
Let \( E \) and \( F \) be idealistic equivalence relations.
\begin{enumerate-(1)}
\item \label{cor:Borelidealistic-1}
If both \( E \) and \( F \) are Borel, then
\[
E \leq_B F \iff E \sqsubseteq_{cB} F \qquad \text{and} \qquad E \sim_B F \iff E \simeq_{cB} F.
\]
\item \label{cor:Borelidealistic-2}
If \( F \) is Borel orbit equivalence relation and \( E \leq_B F \), then \( E \simeq_{cB} F' \) for some Borel orbit equivalence relation \( F' \).
\end{enumerate-(1)}
In particular, ``idealistic'' coincides with ``orbit'', up to classwise Borel isomorphism, within the class of equivalence relations that are Borel reducible to a Borel orbit equivalence relation.
\end{corollary}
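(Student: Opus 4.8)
The plan is to assemble Fact~\ref{fact:CantorShroederBernstein} and Proposition~\ref{prop:kechrismacdonald} (Kechris--Macdonald) with the trivial implication $E \sqsubseteq_{cB} F \Rightarrow E \leq_B F$; no genuinely new argument is needed.

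For part~\ref{cor:Borelidealistic-1}, start with the first biconditional. The implication $E \sqsubseteq_{cB} F \Rightarrow E \leq_B F$ holds for arbitrary analytic equivalence relations, so only $E \leq_B F \Rightarrow E \sqsubseteq_{cB} F$ needs an argument, and this is exactly Proposition~\ref{prop:kechrismacdonald} applied to the idealistic relation $E$ and the Borel relation $F$ (note we do not even use that $F$ is idealistic here). For the second biconditional, the direction $E \simeq_{cB} F \Rightarrow E \sim_B F$ follows by unpacking $E \simeq_{cB} F$ through Fact~\ref{fact:CantorShroederBernstein} into $E \sqsubseteq_{cB} F$ and $F \sqsubseteq_{cB} E$, whence $E \leq_B F$ and $F \leq_B E$. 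Conversely, if $E \sim_B F$ then $E \leq_B F$ and $F \leq_B E$; applying the first biconditional twice --- once with $E$ idealistic and $F$ Borel, once with $F$ idealistic and $E$ Borel --- gives $E \sqsubseteq_{cB} F$ and $F \sqsubseteq_{cB} E$, and Fact~\ref{fact:CantorShroederBernstein} concludes $E \simeq_{cB} F$.

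For part~\ref{cor:Borelidealistic-2}, write $F = E_a$ for a Borel action $a \colon G \times Y \to Y$ of a Polish group $G$ on a standard Borel space $Y$, and assume $E \leq_B F$ with $E$ idealistic. By Proposition~\ref{prop:kechrismacdonald}, $E \sqsubseteq_{cB} F$, so by the definition of classwise Borel embeddability there is a Borel $F$-invariant set $A \subseteq Y$ with $E \simeq_{cB} F \restriction A$. It then remains only to observe that $F \restriction A$ is again a Borel orbit equivalence relation: $A$ is standard Borel as a Borel subset of a standard Borel space; being $F$-invariant, it is invariant under the action $a$, so $a$ restricts to a Borel action $G \times A \to A$ whose induced orbit equivalence relation is precisely $F \restriction A$; and $F \restriction A = F \cap (A \times A)$ is Borel since $F$ is. Hence $F' := F \restriction A$ is as required (and this also proves Proposition~\ref{prop:intro2}).

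The concluding ``in particular'' clause then combines part~\ref{cor:Borelidealistic-2} with Example~\ref{xmp:idealistic}\ref{xmp:idealistic-a}: on one side every Borel orbit equivalence relation is idealistic (and is trivially $\leq_B$ a Borel orbit equivalence relation), and on the other side part~\ref{cor:Borelidealistic-2} shows that every idealistic $E$ which is $\leq_B$ some Borel orbit equivalence relation is classwise Borel isomorphic to a Borel orbit equivalence relation. I do not expect a real obstacle here; the only points requiring a little care are keeping the hypotheses of Proposition~\ref{prop:kechrismacdonald} correctly matched in the two directions of the $\sim_B$-to-$\simeq_{cB}$ argument, and the elementary check that restricting an orbit equivalence relation to an invariant Borel set yields a Borel orbit equivalence relation.
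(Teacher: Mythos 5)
Your proposal is correct and follows essentially the same route as the paper: part (1) is exactly Proposition~\ref{prop:kechrismacdonald} plus Fact~\ref{fact:CantorShroederBernstein}, and part (2) extracts the invariant Borel set $A$ from $E \sqsubseteq_{cB} F$ and restricts. The only cosmetic difference is that the paper invokes part (1) to obtain $E \sqsubseteq_{cB} F$ in part (2) whereas you cite Proposition~\ref{prop:kechrismacdonald} directly, but these are the same step; your extra check that $F \restriction A$ is again a Borel orbit equivalence relation is a welcome (if routine) elaboration that the paper leaves implicit.
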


\begin{proof}
Part~\ref{cor:Borelidealistic-1} follows from Proposition~\ref{prop:kechrismacdonald} and Fact~\ref{fact:CantorShroederBernstein}. 
As for part~\ref{cor:Borelidealistic-2}, first use part~\ref{cor:Borelidealistic-1} to obtain \( E \sqsubseteq_{cB} F \), as witnessed by the Borel \( F \)-invariant set \( A \): letting \( F' \) be the restriction of \( F \) to such \( A \), we are done.
\end{proof}

Along the same lines, we borrow from~\cite[Proposition~2.9]{Mot12} the following result:%
\footnote{Here and throughout the paper, \( E_1 \oplus E_2 \) denotes the disjoint union of the equivalence relations \( E_1 \) and \( E_2 \).}

\begin{proposition} \label{prop:absoption} 
For every orbit equivalence relation \( F \), the following are equivalent:
\begin{enumerate-(i)}
\item 
\(\mathrm{id}_\mathbb{R} \leq_B F \);
\item 
\( \mathrm{id}_{\mathbb{R}} \sqsubseteq_{cB} F \);
\item 
\(F \sim_B \mathrm{id}_\mathbb{R} \oplus F\).
\end{enumerate-(i)}
\end{proposition}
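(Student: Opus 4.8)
The plan is to establish the cycle of implications $(i)\Rightarrow(ii)\Rightarrow(iii)\Rightarrow(i)$, two directions of which are soft. For $(iii)\Rightarrow(i)$ it suffices to observe that $\mathrm{id}_\mathbb{R}\leq_B\mathrm{id}_\mathbb{R}\oplus F\sim_B F$. For $(ii)\Rightarrow(iii)$, say $F$ lives on a standard Borel space $X$, and let $A\subseteq X$ be a Borel $F$-invariant set witnessing $\mathrm{id}_\mathbb{R}\sqsubseteq_{cB}F$, so that $\mathrm{id}_\mathbb{R}\simeq_{cB}F\restriction A$. Since $A$ is $F$-invariant, $F$ splits as $(F\restriction A)\oplus F'$ with $F'=F\restriction(X\setminus A)$, and because $\simeq_{cB}$ is a congruence for $\oplus$ we get $F\simeq_{cB}\mathrm{id}_\mathbb{R}\oplus F'$. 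Using that $\mathrm{id}_\mathbb{R}\oplus\mathrm{id}_\mathbb{R}\simeq_{cB}\mathrm{id}_\mathbb{R}$ — witnessed by any Borel bijection $\mathbb{R}\sqcup\mathbb{R}\to\mathbb{R}$, which is simultaneously a reduction both ways and has bijective quotient map — this yields $\mathrm{id}_\mathbb{R}\oplus F\simeq_{cB}(\mathrm{id}_\mathbb{R}\oplus\mathrm{id}_\mathbb{R})\oplus F'\simeq_{cB}\mathrm{id}_\mathbb{R}\oplus F'\simeq_{cB}F$, hence in particular $F\sim_B\mathrm{id}_\mathbb{R}\oplus F$. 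This is the one implication where I would not use that $F$ is an orbit equivalence relation, in accordance with the fact that this hypothesis is necessary (see \cite[Theorem 2.10]{Mot12}).

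The heart of the argument is $(i)\Rightarrow(ii)$. Let $F$ be induced by a Borel action $a\colon G\times X\to X$ of a Polish group $G$, and fix a Borel reduction $f\colon\mathbb{R}\to X$ of $\mathrm{id}_\mathbb{R}$ to $F$; such an $f$ is automatically injective (since $f(r)=f(r')$ implies $f(r)\mathrel{F}f(r')$, whence $r=r'$), and $f\restriction C$ is continuous for some perfect $C\subseteq\mathbb{R}$, so $f\restriction C\colon C\to X$ is a continuous injection and $Y_0:=f[C]$ is a Borel set (by Lusin--Souslin) which is a partial transversal for $F$, i.e.\ meets each $F$-class in at most one point, with $f\restriction C\colon C\to Y_0$ a Borel isomorphism. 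I would then take $A:=[Y_0]_F$. Granting that $A$ is Borel (the crux, addressed below), everything else is routine. Indeed $A$ is $F$-invariant and $Y_0$ is now a \emph{full} Borel transversal for $F\restriction A$, so the map $A\to Y_0$ sending each $x$ to the unique point of $Y_0$ that is $F$-equivalent to it is Borel — its graph and its complement in $A\times Y_0$ are both analytic, hence it is Borel — i.e.\ $F\restriction A$ admits a Borel selector. Transporting this selector through the Borel isomorphisms $Y_0\cong C\cong\mathbb{R}$ (recall $C$ is a perfect Polish space, hence of size $2^{\aleph_0}$) gives a Borel map $A\to\mathbb{R}$ reducing $F\restriction A$ to $\mathrm{id}_\mathbb{R}$; going the other way, $\mathbb{R}\cong C\xrightarrow{f\restriction C}Y_0\hookrightarrow A$ gives a Borel injection reducing $\mathrm{id}_\mathbb{R}$ to $F\restriction A$ whose range is the transversal $Y_0$; a direct check shows the induced maps on quotients are mutually inverse bijections, so $\mathrm{id}_\mathbb{R}\simeq_{cB}F\restriction A$, which is precisely $\mathrm{id}_\mathbb{R}\sqsubseteq_{cB}F$.

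It thus remains to show that the saturation $A=[Y_0]_F$ is Borel, and this is the step I expect to be the main obstacle, since $F$-saturations of Borel sets are in general only analytic. Here I would genuinely use that $F$ is an \emph{orbit} equivalence relation: by the Becker--Kechris theorem I may assume that $G$ acts continuously with respect to a Polish topology on $X$, and then $A$ is the image of the continuous map $\alpha\colon G\times C\to X$, $(g,z)\mapsto a(g,f(z))$. Because $Y_0$ is a partial transversal, every fiber $\alpha^{-1}(x)$ is either empty or of the form $\bigl(g_0\Stab{f(z)}\bigr)\times\{z\}$ for suitable $z\in C$ and $g_0\in G$, hence a closed — in particular $G_\delta$ — subset of the Polish space $G\times C$. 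By the Arsenin--Kunugui theorem and its refinement to maps with $G_\delta$ fibers (due to Saint-Raymond), a Borel image with $G_\delta$ fibers is Borel; therefore $A$ is Borel, as needed. (If in addition $F$ is Borel this whole step can be bypassed: $\mathrm{id}_\mathbb{R}$ is idealistic and $F$ is Borel, so Proposition~\ref{prop:kechrismacdonald} converts $\mathrm{id}_\mathbb{R}\leq_B F$ directly into $\mathrm{id}_\mathbb{R}\sqsubseteq_{cB}F$.)
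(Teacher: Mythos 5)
Your cycle of implications and the soft steps $(iii)\Rightarrow(i)$, $(ii)\Rightarrow(iii)$ are fine, and the skeleton of $(i)\Rightarrow(ii)$ — extract a Borel partial transversal $Y_0$ from the reduction, saturate it, and build the classwise Borel inverse from the resulting Borel selector — is a correct plan. However, the pivotal step, that $A=[Y_0]_F$ is Borel, is justified with a theorem that does not exist: there is no Arsenin--Kunugui/Saint-Raymond result saying that a Borel (or even continuous) map with closed, or $G_\delta$, fibers has Borel image. Counterexample: let $A\subseteq\omega^\omega$ be analytic non-Borel, write $A=\proj_1(C)$ for a closed $C\subseteq\omega^\omega\times\omega^\omega$, and consider $\proj_1\restriction C\colon C\to\omega^\omega$. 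This is continuous, all fibers $\{x\}\times C_x$ are closed (hence $G_\delta$), and the image is $A$, not Borel. The Arsenin--Kunugui theorem genuinely requires the fibers to be $\sigma$-compact, and cosets of closed subgroups of a non-locally-compact Polish group are in general not $\sigma$-compact.

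The conclusion you need is nonetheless true, but for a different reason, which is precisely where the orbit hypothesis enters. For an orbit equivalence relation the saturation of a Borel set is always Borel: by the Becker--Kechris change-of-topology theorem one refines the Polish topology on $X$ so that the action remains continuous and $Y_0$ becomes (cl)open; then $[Y_0]_F=Y_0^\triangle$ by Lemma~\ref{lem:opentransformed}, and $Y_0^\triangle$ is Borel because Vaught transforms of Borel sets are Borel (see e.g.\ \cite[Lemma~5.1.7]{BecKec}). Once $A$ is Borel, the rest of your argument goes through (the graph of the selector is analytic with analytic complement, hence Borel, as you say). Two smaller remarks: with this argument the passage to a perfect $C$ on which $f$ is continuous is unnecessary, since $f$ is already Borel and injective, so $Y_0=f[\mathbb{R}]$ is Borel by Lusin--Souslin and $f$ is a Borel isomorphism onto it; and your parenthetical observation that Proposition~\ref{prop:kechrismacdonald} handles the case of Borel $F$ is correct but cannot be used to bypass the gap in general, since $F$ need not be Borel.
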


In order to prove Proposition~\ref{thm:intro3}, we further need the following result of Hjorth (\cite[Theorem~0.1]{Hjo05}).

\begin{proposition}[Hjorth] \label{prop:Hjoessctbl}
There is a Borel equivalence relation \( \EH \) such that \(\EH \leq_B F\) for some countable Borel equivalence relation \(F\), yet \(\EH\) is not Borel bireducible with any countable Borel equivalence relation.
\end{proposition}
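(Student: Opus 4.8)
The final statement to prove is Proposition~\ref{prop:Hjoessctbl}, which asserts the existence of a Borel equivalence relation \(\EH\) that is Borel reducible to a countable Borel equivalence relation but is not itself Borel bireducible with any countable Borel equivalence relation. This is Hjorth's theorem from~\cite{Hjo05}, so the proof consists of recalling his construction and the two properties it enjoys. The idea is to build \(\EH\) as a countable \emph{increasing union} \(\EH = \bigcup_n F_n\) of countable Borel equivalence relations \(F_n\) on a fixed standard Borel space, arranged so that the union remains Borel (which requires some care, since an arbitrary increasing union of countable Borel equivalence relations need not be Borel, and in general \(\EH\) will have uncountable classes) but so that \(\EH\) is \emph{not} essentially countable.

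\textbf{Key steps.} First, I would fix the ambient space to be something like \((2^{\mathbb N})^{\mathbb N}\) or a space of the form \(\prod_n X_n\), and define \(F_n\) to be a suitable countable Borel equivalence relation depending on the first \(n\) coordinates (or on all coordinates but with increasing ``mixing'' allowed as \(n\) grows). The point is to choose the \(F_n\) so that: (i) each \(F_n\) is countable Borel; (ii) \(F_n \subseteq F_{n+1}\); (iii) the union \(\EH = \bigcup_n F_n\) is Borel — this is where one uses that the sequence stabilizes on each pair in a definable way, or that \(\EH\) can be given an independent Borel definition; and (iv) the union is reducible to a single countable Borel equivalence relation (for instance because \(\EH\) embeds into a countable Borel equivalence relation built on a larger space via a ``tower'' coding). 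Second, and more substantially, I would show \(\EH\) is \emph{not} essentially countable: here one typically invokes a result to the effect that if \(\EH\) were essentially countable then, being reducible to a countable Borel equivalence relation already, it would have to \emph{be} (Borel bireducible with) one having some ergodicity or measure-theoretic rigidity incompatible with the way \(\EH\) was built — Hjorth's argument uses the theory of turbulence or a cocycle/superrigidity-style obstruction, or a direct Baire-category argument exploiting that each \(F_n\) is ``small'' but \(\EH\) is generically non-smooth in a way no single countable relation captures. The cleanest route is to cite the specific combinatorial/measure-theoretic invariant Hjorth isolates: one shows that every countable Borel equivalence relation \(G\) with \(\EH \le_B G\) must have a cross-section or selector structure on the \(F_n\)-pieces that, pulled back, would contradict the non-smoothness of \(\bigcup_n F_n / \sim\).

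\textbf{Main obstacle.} The hard part is step (iv)/(non-essential-countability): producing a Borel equivalence relation that is genuinely ``between'' the countable ones and the non-countable ones in the precise sense required. Verifying that the increasing union \(\EH = \bigcup_n F_n\) stays Borel while having uncountable classes, and simultaneously sits below a countable Borel equivalence relation, is delicate — it is exactly the phenomenon that makes the class of essentially countable Borel equivalence relations fail to be downward closed under \(\le_B\). I expect the non-essential-countability to rely on an ergodic-theoretic or turbulence-type rigidity argument: one fixes an appropriate quasi-invariant measure (or a comeager orbit structure), shows \(\EH\) acts ``ergodically'' relative to this structure in a way that any reduction to a countable Borel equivalence relation would have to collapse, and derives a contradiction. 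Since all of this is carried out in full in~\cite[Theorem~0.1]{Hjo05}, the proof here is simply to record the statement and refer to that construction; accordingly, I would present the argument as a citation with a brief indication of the construction's shape rather than reproving it from scratch.
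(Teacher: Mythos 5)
The paper gives no proof of this proposition at all: it is stated as Hjorth's theorem and justified purely by the citation \cite[Theorem~0.1]{Hjo05}, which is exactly what your proposal ultimately does, so the two approaches coincide. Your speculative sketch of Hjorth's construction is not load-bearing (and is shaky in places --- for instance, a countable increasing union of countable Borel equivalence relations is automatically Borel, being a countable union of Borel sets), but since the operative step is the citation, this does not affect correctness.
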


\begin{proof}[Proof of Proposition~\ref{thm:intro3}]
Let \( \EH \) be as in Proposition~\ref{prop:Hjoessctbl}, and let \( F \) be a countable Borel equivalence relation such that \( \EH \leq_B F \). In particular, \( F \) is idealistic by Example~\ref{xmp:idealistic}\ref{xmp:idealistic-a}.
Then \( E_1 \not\leq_B \EH \) because otherwise \( E_1 \leq_B F \), contradicting Theorem~\ref{thm : idealistic obstruction}.
Moreover, if \( \EH \sim_B E' \) for some idealistic equivalence relation \( E' \), then \( E' \leq_B F \), and so \( E' \sqsubseteq_{cB} F \) by Proposition~\ref{prop:kechrismacdonald}: let \( A \) be a Borel \( F \)-invariant set witnessing this. Then \( E' \simeq_{cB} F' \) for \( F'\) the restriction of \( F \) to \( A \), and hence \( \EH \sim_B F' \). But since \( F'\) is a countable Borel equivalence relation too, this contradicts the statement of Proposition~\ref{prop:Hjoessctbl}.
\end{proof}

\begin{remark}
Some authors drop the ccc condition from the definition of idealistic. The equivalence relation \( \EH \) would still be a counterexample to Conjecture~\ref{conj2intro} even if such alternative definition is adopted, as Proposition~\ref{prop:kechrismacdonald} hold in that broader context.
\end{remark}

We also point out the following easy consequence of Proposition~\ref{prop:Hjoessctbl}.

\begin{corollary} \label{cor:hjorth}
Let \( \mathcal{E}\) be a collection of analytic equivalence relations such that every \( E \in \mathcal{E} \) is idealistic, and \( \mathcal{E} \) contains all countable Borel equivalence relations.
Then \( \mathcal{E} \) is not downward closed under Borel reducibility.
\end{corollary}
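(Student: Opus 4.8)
The plan is to derive this as a direct consequence of the machinery already assembled, essentially by reusing the proof of Proposition~\ref{thm:intro3}. Suppose toward a contradiction that $\mathcal{E}$ is downward closed under $\leq_B$. Let $\EH$ be the Hjorth equivalence relation from Proposition~\ref{prop:Hjoessctbl}, and fix a countable Borel equivalence relation $F$ with $\EH \leq_B F$. By hypothesis $F \in \mathcal{E}$ (it is a countable Borel equivalence relation), and since $\mathcal{E}$ is downward closed under $\leq_B$, we get $\EH \in \mathcal{E}$. Hence $\EH$ is idealistic.

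Now I would feed this into Proposition~\ref{prop:kechrismacdonald}: since $\EH$ is idealistic and $F$ is Borel with $\EH \leq_B F$, we obtain $\EH \sqsubseteq_{cB} F$, witnessed by a Borel $F$-invariant set $A \subseteq \dom(F)$ with $\EH \simeq_{cB} F \restriction A$. But $F \restriction A$ is again a countable Borel equivalence relation (an invariant Borel subspace of a countable Borel equivalence relation), and $\EH \simeq_{cB} F\restriction A$ implies in particular $\EH \sim_B F \restriction A$. This contradicts the second clause of Proposition~\ref{prop:Hjoessctbl}, namely that $\EH$ is not Borel bireducible with any countable Borel equivalence relation. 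Therefore $\mathcal{E}$ cannot be downward closed under $\leq_B$.

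There is essentially no obstacle here: the statement is a repackaging of Proposition~\ref{thm:intro3}'s argument, with the single observation that ``$\mathcal{E}$ downward closed and containing all countable Borel equivalence relations'' forces $\EH \in \mathcal{E}$, which is the only new input. The one small point to be careful about is that $F \restriction A$ genuinely remains a \emph{countable} Borel equivalence relation, which is immediate because restricting to an invariant subset does not enlarge classes. I would write the proof in two or three sentences, citing Propositions~\ref{prop:Hjoessctbl} and~\ref{prop:kechrismacdonald} and noting that the argument is the same as for Proposition~\ref{thm:intro3}.
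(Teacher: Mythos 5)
Your proof is correct and follows essentially the same route as the paper's: note that downward closure would force $\EH \in \mathcal{E}$, hence $\EH$ idealistic, and then derive a contradiction via Proposition~\ref{prop:kechrismacdonald} and Proposition~\ref{prop:Hjoessctbl}. The only presentational difference is that you inline the argument from Proposition~\ref{thm:intro3} (in the special case $E' = \EH$) whereas the paper simply cites that proof.
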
 

\begin{proof}
Let \( \EH \) be as in Proposition~\ref{prop:Hjoessctbl}. If \( \mathcal{E} \) were downward closed under \( \leq_B \), then \( \EH \in \mathcal{E} \), and so \( \EH \) would be idealistic. But as shown in the proof of Proposition~\ref{thm:intro3}, this is impossible.
\end{proof}

In particular, Corollary~\ref{cor:hjorth} applies to the collections of all idealistic analytic equivalence relation and all orbit equivalence relations, as well as to their restrictions to Borel equivalence relations.

\section{Becker's equivalence relation \( \E \)} \label{sec:Beckertutto}

Recall from Section~\ref{subsec:intro2} that \( \mathcal{I} \) denotes the collection of all idealistic analytic equivalence relations such that all their equivalence classes are Borel and they are not classwise Borel isomorphic to any orbit equivalence relation.
Notice that the latter is equivalent to the seemingly stronger requirement of not being classwise Borel embeddable into an orbit equivalence relation.

The core result of this section is the following:

\begin{theorem} \label{thm : main}
Assume \(\boldsymbol{\Sigma}^1_1\)-determinacy. Then \( \mathcal{I} \neq \emptyset \).
\end{theorem}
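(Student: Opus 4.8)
The plan is to exhibit a single member of \(\mathcal{I}\) explicitly, rather than to argue abstractly for nonemptiness. The natural candidate, following Becker's unpublished work and the roadmap laid out in the introduction, is an equivalence relation \(\E\) built from the isomorphism relation on (countable) \(p\)-groups. First I would set up \(\E\) as the orbit equivalence relation \(F\) in the sense of Proposition~\ref{prop:select}: one starts with a Polish-group action whose orbit equivalence relation \(E\) is \emph{finer} than the target isomorphism-type relation, and then observes that the coarser relation \(F \supseteq E\) identifying two \(p\)-groups when they are abstractly isomorphic is obtained by ``selecting a canonical presentation'' within each \(F\)-class. The point of the \(p\)-group presentation is that Ulm's theorem gives, for countable (reduced) \(p\)-groups, a complete invariant — the Ulm sequence — and this invariant can be used to drive a Borel map \(\theta\) that selects, inside each isomorphism class, a distinguished sub-orbit of the underlying action. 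Verifying that \(\theta\) is Borel and that \(\theta(z) \mathrel F z\) for all \(z\) is the content of Sections~\ref{subsec:p-groups}--\ref{subsec:maindefinition}; granting that, Proposition~\ref{prop:select} immediately yields that \(\E\) is idealistic.

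Next I would check that every equivalence class of \(\E\) is Borel. This should follow from the fact that the isomorphism relation on countable structures restricted to a \emph{single} isomorphism type is the orbit of a point under a Polish group action, hence analytic, and is co-analytic because membership in a fixed isomorphism type of \(p\)-groups is decidable from the (Borel) Ulm invariant; so each class is \(\boldsymbol{\Delta}^1_1\). Alternatively, since \(\E\) sits below a Borel orbit equivalence relation along \(\theta\) and \(\theta\) selects an orbit inside each class, each \(\E\)-class maps to a single Borel orbit, and pulling back along the Borel \(\theta\) keeps the class Borel. Either way this step is routine once the presentation is fixed.

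The substantive step — and the one where \(\boldsymbol{\Sigma}^1_1\)-determinacy enters — is showing that \(\E\) is \emph{not} classwise Borel isomorphic to (equivalently, by the remark preceding Theorem~\ref{thm : main}, not classwise Borel embeddable into) any orbit equivalence relation. Here I would argue by contradiction: if \(\E \sqsubseteq_{cB} E_a\) for some orbit equivalence relation \(E_a\) induced by a Borel action of a Polish group, one extracts from the classwise Borel embedding enough uniformity to build a Borel assignment of ``ranks'' or Ulm-type invariants to the \(\E\)-classes that is incompatible with the known boundedness behaviour of orbit equivalence relations. The mechanism for deriving the contradiction is Steel's forcing with tagged trees: one uses it to produce, for any putative Borel reduction witnessing the embedding into an \(S_\infty\)-type action, a tagged-tree generic whose associated \(p\)-group has an Ulm length exceeding any bound the reduction could respect, thereby violating the invariance/bijectivity of the factored map \(\hat f\) on the quotient. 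Analytic determinacy is what makes the relevant trees well-founded-or-not dichotomy behave correctly and licenses the absoluteness/unboundedness arguments in Steel's construction. This is the hard part: packaging Becker's tagged-tree forcing argument (the content of Section~\ref{sec:Becker} and Appendix~\ref{appendix}) into a clean statement that ``no Borel reduction from \(\E\) into an orbit equivalence relation can be classwise,'' and in particular being careful that the argument rules out \emph{all} orbit equivalence relations and not merely the \(S_\infty\)-ones. Once that is in hand, \(\E \in \mathcal{I}\), so \(\mathcal{I} \neq \emptyset\).

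I expect the tagged-tree forcing step to be the main obstacle, both because it is technically the heaviest and because it is the only place the determinacy hypothesis is genuinely needed; the idealistic-ness via Proposition~\ref{prop:select} and the Borelness of classes are comparatively soft. Accordingly I would structure the write-up so that all the Steel-forcing machinery is isolated (deferring details to the appendix), and the proof of Theorem~\ref{thm : main} itself reads as: define \(\E\) (Sections~\ref{subsec:p-groups}--\ref{subsec:maindefinition}); apply Proposition~\ref{prop:select} to get idealistic; note each class is Borel; and invoke the Section~\ref{sec:Becker} analysis for non-embeddability into orbit equivalence relations.
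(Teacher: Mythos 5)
Your proposal follows the same route as the paper: define \(\E\) on \(\Mod(T)\) using \(p\)-groups and Ulm theory, get idealistic-ness from Proposition~\ref{prop:select} via the Borel map \(\theta\) of Lemma~\ref{lem : Borel h}, observe that the reduction to the isomorphism relation on countable abelian \(p\)-groups gives Borel classes, and rule out classwise Borel embeddability into orbit equivalence relations via Steel's forcing under analytic determinacy. One point worth correcting in your sketch of the hard step: the contradiction is \emph{not} obtained by producing a generic whose Ulm length overshoots a bound that the reduction must respect. It is instead a clash of Borel-complexity estimates for the specific \(\E\)-classes \(\widetilde H^\beta\): Lemma~\ref{lem:steel} (via a \(\boldsymbol{\Sigma}^1_1\)-determinacy game and Proposition~\ref{prop : complexity separating set}, using that \(\widetilde H^\beta\) separates \(\mathbb{P}_\beta\)- from \(\mathbb{P}_\gamma\)-generics) shows \(\widetilde H^\beta \notin \boldsymbol{\Pi}^0_{\beta+1}\) for club many \(\beta\), while Lemma~\ref{lemma:contradiction1} shows that a classwise Borel embedding into \emph{any} Polish group action (Sami's Transfer Lemma, Vaught transforms, the \(\equiv^\beta\)-estimate of Lemma~\ref{lem : Borel h}\ref{item : Borel h(d)}, and the orbit-rank bound \cite[Corollary~5.1.10]{BecKec}) would force \(\widetilde H^\beta \in \boldsymbol{\Pi}^0_{\beta+1}\) for club many \(\beta\); intersecting the clubs gives the contradiction.
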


Indeed, \Bornot{Becker}{the first author} provided in~\cite{Bec} an example \( \E \) of an idealistic analytic equivalence relation with only Borel classes which is not an orbit equivalence relation itself (see Definition~\ref{def:EdiBecker}).
Therefore, to prove Theorem~\ref{thm : main} we just need to show that \( \E \) does not classwise Borel embed into any orbit equivalence relation. This is postponed to Section~\ref{sec:Becker} (see Lemmas~\ref{lem:steel} and~\ref{lemma:contradiction1}).

Denote by \( \mathcal{O} \) the class of all Borel orbit equivalence relations with uncountably many orbits.

\begin{theorem} \label{thm : main2}
If \( E \in \mathcal{I} \) is such that \( E_0 \not\leq_B  E \)
and \(F_1,F_2 \in \mathcal{O}\), then \(E \oplus F_i \in \mathcal{I}\) and
\[
F_1\leq_B F_2 \iff E\oplus F_1\leq_B E \oplus F_2.
\]
\end{theorem}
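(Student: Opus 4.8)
The plan is to prove both the stability claim (\(E \oplus F_i \in \mathcal{I}\)) and the equivalence of the two reducibility statements, treating the nontrivial direction of the latter as the heart of the matter. First I would dispose of the stability claim: since \(E \in \mathcal{I}\) and \(F_i \in \mathcal{O}\) are both idealistic with only Borel classes, their disjoint union \(E \oplus F_i\) is readily seen to be idealistic with only Borel classes (assign to a class the ideal it carries in whichever summand it lives in; the Borelness of the resulting assignment follows coordinatewise since the two pieces sit on a clopen partition of the disjoint sum). It then remains to check that \(E \oplus F_i\) is not classwise Borel isomorphic to an orbit equivalence relation. Here I expect to argue by contradiction: if \(E \oplus F_i \simeq_{cB} G\) for an orbit equivalence relation \(G\), one should be able to pull back the classwise isomorphism to see that \(E\) itself classwise Borel embeds into \(G\) (the \(E\)-part corresponds to an invariant Borel piece of \(G\) under the factoring bijection on quotients, and a Borel invariant subset of an orbit equivalence relation is again orbit), contradicting \(E \in \mathcal{I}\). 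This is where the remark right before the theorem — that membership in \(\mathcal{I}\) already forbids classwise Borel \emph{embeddability} into an orbit equivalence relation — will be used.

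For the equivalence, the direction \(F_1 \leq_B F_2 \Rightarrow E \oplus F_1 \leq_B E \oplus F_2\) is immediate: take the identity on the \(E\)-summand and the given reduction on the \(F\)-summand. The substantial direction is the converse. Suppose \(g \colon (E \oplus F_1) \to (E \oplus F_2)\) is a Borel reduction. The issue is that \(g\) need not respect the decomposition: points of the \(F_1\)-part could map into the \(E\)-part of the target, and vice versa. The key structural fact to exploit is the hypothesis \(E_0 \not\leq_B E\): by the Harrington–Kechris–Louveau dichotomy this forces \(E\) to be smooth, and a smooth equivalence relation with only Borel classes is (classwise Borel) reducible to \(\mathrm{id}_\mathbb{R}\); meanwhile \(F_2 \in \mathcal{O}\) has uncountably many orbits, so by Proposition~\ref{prop:absoption} we have \(F_2 \sim_B \mathrm{id}_\mathbb{R} \oplus F_2\), and likewise \(\mathrm{id}_\mathbb{R} \leq_B F_1\). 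Using these, I would show the \(E\)-summand can be absorbed harmlessly on both sides, effectively reducing the problem to: from a Borel reduction of \(\mathrm{id}_\mathbb{R} \oplus F_1\) into \(\mathrm{id}_\mathbb{R} \oplus F_2\), extract \(F_1 \leq_B F_2\).

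To carry out that extraction I would analyze where the reduction sends the \(F_1\)-part. A reduction of a single \(F_1\)-class is a constant (it is a subset of one target class), so the map \(x \mapsto [g(x)]\) is \(F_1\)-invariant, hence descends to a Borel-measurable assignment from \(\mathbb{R}/F_1\)-classes to target classes; the target classes that are hit and lie in the \(\mathrm{id}_\mathbb{R}\)-summand correspond to an \(F_1\)-invariant Borel set \(B\) on which \(F_1\restriction B\) is smooth (its quotient injects Borel-measurably into \(\mathbb{R}\)), so \(F_1 \restriction B \leq_B \mathrm{id}_\mathbb{R} \leq_B F_2\); on the complementary invariant Borel set, \(g\) witnesses \(F_1 \restriction B^c \leq_B F_2\) directly. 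Since \(F_2 \sim_B \mathrm{id}_\mathbb{R} \oplus F_2 \sim_B F_2 \oplus F_2\), these two pieces glue to a single reduction \(F_1 \leq_B F_2\). I expect the main obstacle to be the bookkeeping in this last gluing step — ensuring the two partial reductions really combine into one Borel function into a single copy of \(F_2\), which is precisely what the absorption identities \(F_2 \sim_B \mathrm{id}_\mathbb{R} \oplus F_2\) and \(F_2 \sim_B F_2 \oplus F_2\) (the latter because \(F_2\) has uncountably many orbits) are there to guarantee — rather than any conceptual difficulty. A secondary point requiring care is verifying that "\(E_0 \not\leq_B E\) and all classes Borel" does give \(E \leq_B \mathrm{id}_\mathbb{R}\), which I would get from the Glimm–Effros dichotomy together with the Burgess / Silver-type fact that a Borel-class smooth equivalence relation has a Borel transversal or at least reduces to equality on reals.
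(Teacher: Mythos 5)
Your argument for the stability claim \( E \oplus F_i \in \mathcal{I} \) is essentially fine (and the paper in fact treats this part as implicit): the disjoint sum of two idealistic relations with Borel classes is idealistic with Borel classes, and a classwise Borel isomorphism \( E \oplus F_i \simeq_{cB} G \) with \(G\) orbit would pull back to exhibit \( E \sqsubseteq_{cB} G \) (restrict \(G\) to the Borel \(G\)-invariant set of classes that the quotient bijection matches with the \(E\)-part), contradicting the remark at the start of Section~\ref{sec:Beckertutto}.

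The core of your argument for \(E \oplus F_1 \leq_B E\oplus F_2 \Rightarrow F_1 \leq_B F_2\) has a fatal flaw, however. You assert that \(E_0 \not\leq_B E\) forces \(E\) to be smooth by the Harrington--Kechris--Louveau dichotomy, and you then try to absorb the \(E\)-summand into the copy of \(\mathrm{id}_\mathbb{R}\) sitting inside \(F_i\). But HKL only applies to \emph{Borel} equivalence relations, and any \(E \in \mathcal{I}\) is a proper analytic equivalence relation (Becker's \(\E\), for instance, is not Borel, else Glimm--Effros would give \(E_0\leq_B\E\)). Worse, the conclusion \(E \leq_B \mathrm{id}_\mathbb{R}\) is actually \emph{false} for any \(E\in\mathcal{I}\): since \(E\) is idealistic, if \(E \leq_B \mathrm{id}_\mathbb{R}\) then Corollary~\ref{cor:Borelidealistic}\ref{cor:Borelidealistic-2} would give \(E \simeq_{cB} F'\) for a Borel orbit equivalence relation \(F'\), contradicting \(E\in\mathcal{I}\). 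So the entire absorption strategy collapses; you cannot replace the \(E\)-summand by \(\mathrm{id}_\mathbb{R}\) on the codomain side.

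The paper sidesteps this by never applying a dichotomy to \(E\) itself. Given a Borel reduction \(f\colon X\sqcup X_1 \to X\sqcup X_2\), it restricts attention to the Borel \(F_1\)-invariant set \(Y_1 = f^{-1}(X)\cap X_1\) of \(F_1\)-points sent into the \(E\)-part, and observes that \(F := F_1\restriction Y_1\) is a \emph{Borel} equivalence relation that reduces to \(E\); hence \(E_0\not\leq_B F\), and Glimm--Effros legitimately applies to \(F\) to give \(F\leq_B \mathrm{id}_\mathbb{R}\). The complementary piece \(R := F_1\restriction (X_1\setminus Y_1)\) reduces directly to \(F_2\) via \(f\). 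One then concludes \(F_1 = F\oplus R \leq_B F_2\) by Silver's dichotomy and the absorption Proposition~\ref{prop:absoption}, splitting into cases according to whether \(R\) has countably or uncountably many classes. So the move you were missing is to push the dichotomy onto the Borel pull-back inside \(F_1\) rather than onto the analytic \(E\).
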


\begin{proof}
The forward direction of the equivalence is obvious, so we only prove the backwards implication.
Let \(f\colon X\sqcup X_1 \to X\sqcup X_2\) witness \(E\oplus F_1 \leq_B E\oplus F_2\), where \( X \) is the domain of \( E \) and \( X_1 \) and \( X_2 \) are the domains of \( F_1 \) and \( F_2 \), respectively. Let \(F \) be the restriction of \( F_1 \) to the Borel \( F_1 \)-invariant set \( Y_1 = f^{-1}(X)\cap X_1 \), so that \( F \leq_B E \), as witnessed by \( f \restriction Y_1 \).
Note that \( F \) is Borel and \(E_0 \nleq_B F\), because otherwise \(E_0 \leq_B E\), contradicting our assumption.
Therefore \( F \leq_B \mathrm{id}_\mathbb{R}\) by the Borel Glimm-Effros dichotomy. (E.g., see \cite[Theorem~3.4.3]{BecKec}.)
Let \( R  \) be the restriction of \( F_1 \) to \( X_1 \setminus Y_1 \), so that \(F_1 = F \oplus R\) and \( R \leq_B F_2 \), as witnessed by \( f \restriction (X_1 \setminus Y_1) \). If \(R\) has countably many classes, then \( F_1 = F \oplus R \leq_B \mathrm{id}_\mathbb{R} \), and so, as desired, we conclude that \(F_1 \leq F_2\) since \(\mathrm{id}_\mathbb{R} \leq_B F_2\) by Silver's dichotomy. (E.g., see~\cite[Theorem~3.3.1]{BecKec}.) If instead \( R \) has uncountably many classes, and hence \( R \in \mathcal{O} \), then \( \mathrm{id}_{\mathbb{R}}\leq_B R\) by Silver's dichotomy again, and thus by Proposition~\ref{prop:absoption} we get
\[
F_1 = F\oplus R \leq_B \mathrm{id}_\mathbb{R} \oplus R \sim_B R \leq_B F_2,
\]
and we are done.
\end{proof}

Together with (the proof of) Theorem~\ref{thm : main}, Theorem~\ref{thm : main2} entails
Theorem~\ref{thm:intro1}: 
this is because, by construction, Becker's equivalence relation \( \E \) is Ulm classifiable, and thus
\( E_0 \not\leq_B \E \) (see~\cite[Theorem~3.4.4]{BecKec}).

\subsection{Descriptive and computational aspects of \(p\)-groups} \label{subsec:p-groups}

Throughout this section, let \(H\) be a countable abelian \(p\)-group, for a fixed prime number \(p\). (Recall that an abelian group \(H\) is a \textbf{\(p\)-group} if for every \(a \in H\) there exists \(n \in \omega\) such that \(p^n a=0\).)  We say that \(H\) is \textbf{divisible} if for all \(a \in H\) and \(n > 0\) there exists
\(b \in H\) such that \(nb = a\), and that \(H\) is \textbf{reduced} if \(H\) has no nontrivial divisible subgroup. It is well-known that
every countable abelian \(p\)-group \(H\) can be written as a direct sum \(R(H)\oplus D(H)\), where \(R(H)\) is reduced and \(D(H)\) is divisible. In particular, \(D(H)\cong \mathbb{Z}(p^\infty)^{(n)}\) is the direct sum of \(n\) copies of the quasi-cyclic \(p\)-group \(\mathbb{Z}(p^\infty)\) for some \(n\in \omega+1\); we refer to such \(n\) as the \textbf{rank of the divisible part} of \(H\).  Define a transfinite sequence of subgroups \( p^\alpha H \) of \( H \), where \( \alpha \) is an ordinal, by setting
\( p^0 H = H\), \(p^{\alpha+1}H = p(p^\alpha H)\), and for \(\lambda\) limit, \(p^\lambda H =
\bigcap_{\alpha<\lambda} p^\alpha H \).
Whenever \(H\) is countable,
there is a least countable ordinal \(\rho\) such that 
\(p^\rho H  = p^{\rho+1} H \) and thus \(p^\rho H = D(H)\). 
Such ordinal \(\rho\) is called the \markdef{Ulm length} of \(H\) and will be denoted by \(l(H)\).
For all \(\alpha<l(H)\) the \markdef{\(\alpha\)-th Ulm invariant \( U_\alpha(H) \in \omega \cup \{ \infty \} \) of \(H\) }is defined by
\[
U_\alpha(H) =\dim_{\mathbb{Z}_p}(P_\alpha/P_{\alpha+1}),
\] 
where \(P_\alpha = \{a \in p^\alpha H : pa = 0\}\). 
For notational simplicity, when speaking of Ulm invariants we write \( \omega \) instead of \( \infty \), so that \( U_\alpha(H) \in \omega+1\).
Ulm’s theorem states that two countable reduced abelian \(p\)-groups
are isomorphic if and only if their Ulm lengths and Ulm invariants are the same. Consequently, two countable abelian \(p\)-groups are isomorphic if and only if they have same Ulm length, same Ulm invariants, and their divisible parts have same rank. (For a brief account of Ulm's theorem we refer the reader to \cite[Section~4]{CalTho}.)

Let \(\mathcal{L}'=\{+\}\), regarded as a relational language. For \(x\in X_{\mathcal{L}'} = 2^{\omega\times\omega\times \omega}\), denote by \(H_x\) the \(\mathcal{L}'\)-structure encoded by \(x\). More precisely, define \(H_x\) as the \(\mathcal{L}'\)-structure \((\omega, +_x)\) where \(k+_xm = n \iff x(k,m,n)\).

Fix a recursive bijection \(\langle \cdot , \cdot \rangle \colon \omega^2 \to \omega\).
Let \( \mathsf{LO} \) be the set of codes for countable (strict) linear orders, i.e., \(\mathsf{LO} \subseteq 2^\omega\) consists
 of all \(x \in 2^\omega\) such that 
 \[
 {<_x} = \{(m,n) : x(\langle m,n\rangle) = 1\}
 \] 
 is a linear order.
Let also \(\mathsf{WO} \subseteq \mathsf{LO}\) be the set of codes of well-orders of \(\omega\). If \(x \in \mathsf{WO}\), let \(|x|\) be the
order type of \(<_x\). Recall that \( \mathsf{WO} \) is a \( \Pi^1_1 \)-set which is complete for the class of coanalytic sets. (Throughout the paper we distinguish between boldface and lightface projective hierarchy.)

\begin{definition} \label{def:f}
Let \(f\colon X_{\mathcal{L}'} \times 2^\omega\to \omega + 1\) be the partial function defined as follows:
\[
f(x,y) =\begin{cases}
U_{|y|}(H_x) &\text{if \(H_x\) is an abelian \(p\)-group,}\\ &\text{ \(y\in \mathsf{WO}\) and \(|y|<l(H_x)\)};\\
\text{undefined} &\text{otherwise}.
\end{cases}
\]
\end{definition}

Next lemma states some usuful facts about the definability of Ulm classification.

\begin{lemma}
\label{lem : formulas}
\begin{enumerate-(a)}
\item \label{lem : formulas-a}
For every \(\alpha < \omega_1\) there is an \(\mathcal{L}'_{\omega_1\omega}\)-formula \(\upphi_\alpha(v)\) such that
for any abelian \(p\)-group \(H\) and any element \(a\in H\)
\[
H \models \upphi_\alpha[a] \quad\iff\quad
\alpha<l(H)\text{ and } a\in p^\alpha H.
\]
\item  \label{lem : formulas-b}
For every \(\alpha < \omega_1 \) and \(n\in \omega + 1\) there is an \(\mathcal{L}'_{\omega_1\omega}\)-sentence \(\uppsi_{\alpha,n}\) such that
for any abelian \(p\)-group \(H\) 
\[
H \models \uppsi_{\alpha,n} \quad\iff\quad
\alpha<l(H)\text{ and }  U_\alpha(H) =n.
\]
\item \label{lem : formulas-c}
For all \(x\in X_{\mathcal{L}'}\), if \(H_x\) is a \(p\)-group then
\(l(H_x)\leq\omega_1^x\).
\item \label{lem : formulas-d}
The partial function \(f\) from Definition~\ref{def:f} is \(\Pi^1_1\)-recursive.
\end{enumerate-(a)}
\end{lemma}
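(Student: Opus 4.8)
The plan is to prove each part of Lemma~\ref{lem : formulas} more or less in order, since later parts build on earlier ones.

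\medskip

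\noindent\textbf{Parts \ref{lem : formulas-a} and \ref{lem : formulas-b}.} First I would construct the formulas $\upphi_\alpha(v)$ by transfinite recursion on $\alpha$, mirroring the recursive definition of the subgroups $p^\alpha H$. Set $\upphi_0(v)$ to be $v = v$ (so it holds of every element). For the successor step, $\upphi_{\alpha+1}(v)$ should say ``$v \in p(p^\alpha H)$'', i.e.\ $\exists w\,(\upphi_\alpha(w) \wedge pw = v)$, where $pw = v$ is shorthand for the $\mathcal{L}'$-term-free formula expressing $\underbrace{w + \dots + w}_{p} = v$. For limit $\lambda$, take $\upphi_\lambda(v) = \bigwwedge_{\alpha < \lambda} \upphi_\alpha(v)$; this is a legitimate $\mathcal{L}'_{\omega_1\omega}$-formula precisely because $\lambda$ is countable. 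One checks by induction that $H \models \upphi_\alpha[a]$ iff $a \in p^\alpha H$ \emph{and} $p^\alpha H$ is reached strictly before the Ulm length, which is exactly the condition $\alpha < l(H)$ together with $a \in p^\alpha H$ — the point being that once $p^\alpha H$ stabilizes to the divisible part, the recursion is ``trivial'' but the formula still correctly tracks membership; the clause ``$\alpha < l(H)$'' in the statement is just recording when $p^\alpha H \neq p^{\alpha+1}H$, and this can be folded in by additionally conjoining a sentence asserting $p^\alpha H \neq p^{\alpha+1}H$, namely $\exists v\,(\upphi_\alpha(v) \wedge \neg\upphi_{\alpha+1}(v))$. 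For part \ref{lem : formulas-b}, recall $U_\alpha(H) = \dim_{\mathbb{Z}_p}(P_\alpha/P_{\alpha+1})$ with $P_\alpha = \{a \in p^\alpha H : pa = 0\}$. Membership in $P_\alpha$ is expressed by $\upphi_\alpha(v) \wedge pv = 0$. To say the quotient $P_\alpha/P_{\alpha+1}$ has dimension exactly $n < \omega$, assert the existence of $n$ elements of $P_\alpha$ that are $\mathbb{Z}_p$-linearly independent modulo $P_{\alpha+1}$ (a finite first-order-style block, using $\upphi_{\alpha+1}$ to express ``$\in P_{\alpha+1}$'') together with the statement that no $n+1$ such independent elements exist (again a finitary assertion over a $\mathbb{Z}_p$-vector space, since there are only $p^{n+1}$ linear combinations to rule out); for $n = \omega$ one conjoins, over all finite $k$, the existence of $k$ independent elements. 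Conjoin with the sentence witnessing $\alpha < l(H)$ as above to get $\uppsi_{\alpha,n}$.

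\medskip

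\noindent\textbf{Part \ref{lem : formulas-c}.} This is the standard fact that the Ulm length of a countable structure is bounded by its Church--Kleene ordinal relative to the code. The argument: the sequence $\langle p^\alpha H_x : \alpha < l(H_x)\rangle$ is a strictly decreasing (on the relevant part) sequence of subsets of $\omega$, and each $p^\alpha H_x$ is $\Delta^1_1(x)$-definable uniformly in (a code for) $\alpha$ by part \ref{lem : formulas-a} — more precisely, the relation ``$\alpha$ is (coded by a well-order) $<\omega_1^x$ and $n \in p^\alpha H_x$'' is $\Pi^1_1(x)$, via the recursion. A boundedness/$\Pi^1_1$-bounding argument then shows that the recursion must stabilize before $\omega_1^x$: if it ran to $\omega_1^x$ or beyond, one could define a $\Delta^1_1(x)$ injection of $\omega_1^x$ into $\mathscr{P}(\omega)$ indexed by ordinals, contradicting the fact that $\omega_1^x$ is the supremum of the $x$-recursive ordinals. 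Alternatively, cite the effective version of Ulm's theorem / the analysis of the isomorphism relation on $p$-groups (e.g.\ from \cite{CalTho} or Barwise-style admissible-set arguments) which gives $l(H_x) < \omega_1^x$ directly; I'd state it and give the one-paragraph boundedness justification.

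\medskip

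\noindent\textbf{Part \ref{lem : formulas-d}.} This is the part I expect to be the main obstacle, so I'd spend the most care here. Recall $f(x,y) = U_{|y|}(H_x)$ when $H_x$ is an abelian $p$-group, $y \in \mathsf{WO}$, and $|y| < l(H_x)$, and is undefined otherwise; being ``$\Pi^1_1$-recursive'' means the graph $\{(x,y,n) : f(x,y) = n\}$ is $\Pi^1_1$ (and the domain is $\Pi^1_1$), uniformly. The conditions ``$H_x$ is an abelian $p$-group'' is arithmetic-in-$x$ (the group axioms are first-order; the $p$-group condition ``$\forall a\,\exists k\,(p^k a = 0)$'' is $\Pi^0_2(x)$), and ``$y \in \mathsf{WO}$'' is $\Pi^1_1(y)$. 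The delicate clause is ``$|y| < l(H_x)$'' and the computation of $U_{|y|}(H_x)$. The key tool is part \ref{lem : formulas-c}: since $l(H_x) < \omega_1^x$ whenever $H_x$ is a $p$-group, the condition ``$|y| < l(H_x)$'' can be rephrased using the $\Pi^1_1$ formulas from \ref{lem : formulas-a}: it holds iff (for the linear order $<_y$, which is a well-order) the subgroup $p^{|y|}H_x$ strictly contains $p^{|y|+1}H_x$. To make this effective, I'd use the recursion-theoretic fact that over a well-order code $y$ one can carry out $|y|$-length effective transfinite recursions: the relation ``$n \in p^{|y|}H_x$'' is $\Pi^1_1$ in $(x,y)$ uniformly, by writing out the Spector–Gandy-style definition — $n \in p^{|y|}H_x$ iff there is a ``partial computation'' function along $<_y$ witnessing the successive divisions, and the existence of such a function is $\Sigma^1_1$, but the \emph{totality}/coherence forces it to actually be $\Pi^1_1$, or one argues directly that the $\mathcal{L}'_{\omega_1\omega}$-formula $\upphi_{|y|}$ is, as a function of the code $y$, $\Pi^1_1$-recursively presented (its ``formula tree'' is a recursive-in-$y$ tree) and then ``$H_x \models \upphi_{|y|}[n]$'' unwinds to a $\Pi^1_1(x,y)$ condition because model-checking an $\mathcal{L}_{\omega_1\omega}$ formula with a $\Pi^1_1$-recursive code against a structure coded by $x$ is $\Pi^1_1(x,y)$. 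Finally, given all this, $U_{|y|}(H_x) = n$ becomes: ``$H_x$ is a $p$-group, $y \in \mathsf{WO}$, $p^{|y|}H_x \neq p^{|y|+1}H_x$, and there exist $n$ elements independent mod $p^{|y|+1}H_x$ inside $P_{|y|}$ but not $n+1$'' — a conjunction of $\Pi^1_1$ conditions, hence $\Pi^1_1$; the ``not $n+1$'' clause is also $\Pi^1_1$ since quantifying over finitely many group elements and using the $\Pi^1_1$ membership relations keeps us in $\Pi^1_1$ (the universal number-quantifier over ``all $\mathbb{Z}_p$-combinations'' is bounded). The main subtlety to get right is the uniformity: that everything — the formulas $\upphi_\alpha$, $\uppsi_{\alpha,n}$, the bound $\omega_1^x$, the model-checking — goes through $\Pi^1_1$-recursively \emph{in the well-order code $y$}, rather than just for each fixed $\alpha$ separately; this is where the effective transfinite recursion theorem and the $\Pi^1_1$-recursive presentation of the Ulm-invariant formulas along a well-order do the real work.
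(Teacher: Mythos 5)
Your overall strategy matches the paper's, which is quite terse: for parts~\ref{lem : formulas-a}--\ref{lem : formulas-b} it cites Barwise--Eklof rather than carrying out the recursion explicitly (your construction is the standard one, though note you cannot literally conjoin $\exists v\,(\upphi_\alpha(v)\wedge\neg\upphi_{\alpha+1}(v))$ into the definition of $\upphi_\alpha$ --- that is circular; first build the ``core'' formula $\upphi^0_\alpha$ by recursion and then set $\upphi_\alpha = \upphi^0_\alpha \wedge \exists v\,(\upphi^0_\alpha(v)\wedge\neg\upphi^0_{\alpha+1}(v))$); for \ref{lem : formulas-c} it cites Ash--Knight; and for \ref{lem : formulas-d} it reduces, as you do, to showing that the graph of $f$ is $\Pi^1_1$, by observing that the sequence $(p^\beta H_x \mid \beta\le|y|+1)$ is uniformly computable from $(x,y)$ by effective transfinite recursion along $<_y$, after which $n=U_{|y|}(H_x)$ is arithmetic in that sequence. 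Two places in your sketch do not close as written. In \ref{lem : formulas-c}, the claimed absurdity --- a ``$\Delta^1_1(x)$ injection of $\omega_1^x$ into $\mathscr{P}(\omega)$'' --- is not actually absurd ($\omega_1^x$ is countable and there are continuum-many $\Delta^1_1(x)$ reals), so the boundedness argument as stated proves nothing; either cite the result, as the paper does, or give the genuine $\Sigma^1_1$-bounding argument, which takes more care. In \ref{lem : formulas-d}, treating ``$n\in p^{|y|}H_x$'' as merely $\Pi^1_1$ is not enough: the dimension count necessarily involves \emph{non}-membership conditions (``the combination $\sum c_i a_i$ is \emph{not} in $P_{|y|+1}$''), and a mixture of $\Pi^1_1$ and $\Sigma^1_1$ clauses is a priori only $\Delta^1_2$. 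What actually rescues the computation --- and what your own remark about the uniqueness of the recursion function when $y\in\mathsf{WO}$ is implicitly using --- is that, on the $\Pi^1_1$ side condition $y\in\mathsf{WO}$, the membership relation $n\in p^{|y|}H_x$ is $\Delta^1_1(x,y)$, so both it and its complement can be placed in $\Pi^1_1$ form and the whole graph remains $\Pi^1_1$. You have the right ingredients; you should just make the use of \emph{both} the $\Sigma^1_1$ and $\Pi^1_1$ descriptions of the recursion explicit rather than speaking only of $\Pi^1_1$.
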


\begin{proof}
The proof of~\ref{lem : formulas-a} and~\ref{lem : formulas-b} is implicit in \cite[Lemmas~2.1 and 2.2]{BarEkl}. 
Part~\ref{lem : formulas-c} is the relativized version of \cite[Theorem~8.17]{AshKni}.
For part~\ref{lem : formulas-d}, by~\cite[Theorem 3D.2]{Mos} it is enough to show that the graph of \( f \) is a \(\Pi^1_1 \)-subset of \( X_{\mathcal L'} \times 2^\omega \times (\omega+1)  \), where \(\omega+1\) is identified in the obvious way with \( \omega \). This follows from the fact that \( \mathsf{WO} \) is \( \Pi^1_1 \), 
that the sequence \( ( p^\beta H_x \mid \beta \leq |y|+1 ) \) can be computed recursively-in-\( (x,y) \), and that the predicate \( n = U_{|y|}(H_x) \) is arithmetic in the parameters \( p^{|y|} H_x \) and \( p^{|y|+1} H_x \), and hence in \( (x , y) \).
\end{proof}

Given a countable ordinal \(\beta<\omega_1\) and two \( \mathcal{L} \)-structures \( H_1 \) and \( H_2 \), we write \( H_1 \equiv^\beta H_2 \) if and only if \( H_1 \) and \( H_2 \) satisfy the same \( \mathcal{L}_{\omega_1\omega} \)-sentences of quantifier rank at most \(\beta\). 

\begin{lemma}{\cite[Theorem~3.1]{BarEkl}}
\label{lem : BarEkl}
Let \(H_1\) and \(H_2\) be two abelian \(p\)-groups, viewed as \( \mathcal{L}' \)-structures, and \(\beta < \min \{l(H_1), l(H_2) \} \) be an ordinal satisfying \(\omega\beta=\beta\). Suppose that \(U_\alpha(H_1)=U_\alpha(H_2)\) for all \(\alpha<\beta\). Then \(H_1\equiv^\beta H_2\).
\end{lemma}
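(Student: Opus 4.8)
The plan is to prove the statement by an Ehrenfeucht–Fra\"iss\'e / back-and-forth argument calibrated to quantifier rank $\beta$, using the filtration $(p^\alpha H_i)_{\alpha}$ as a scaffolding. Recall that $H_1 \equiv^\beta H_2$ is equivalent to Player II having a winning strategy in the $\beta$-round EF game on $H_1, H_2$ (this is the classical Karp/Barwise characterization of $\mathcal{L}_{\omega_1\omega}$-equivalence up to quantifier rank; for limit/infinite ranks one plays a descending ordinal clock). So it suffices to exhibit a winning strategy for II. The natural invariant to maintain is a family of partial isomorphisms $q \colon \bar a \mapsto \bar b$ (finite partial maps $H_1 \rightharpoonup H_2$) that respect, at each stage corresponding to ``remaining rank $\gamma$'', the height function relative to the filtration: if $a \in \bar a$ has $H_1$-height $\alpha$ (i.e.\ $a \in p^\alpha H_1 \setminus p^{\alpha+1}H_1$, or $a \in p^{\alpha}H_1$ with $\alpha$ large), then $q(a)$ should have matching height in $H_2$ as far as one can ``see'' within the remaining budget $\gamma$; more precisely one tracks $\min(\mathrm{ht}(a),\gamma)$ and similarly for finite $\Z_p$-linear combinations and for the ``$p$-divides'' relations among the chosen elements.

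First I would set up the bookkeeping: fix $\beta$ with $\omega\beta = \beta$, fix $\gamma_0 = \beta$, and define the notion of a \emph{good position} of rank $\gamma \le \beta$ to be a finite partial injection $q$ between the groups which is a partial homomorphism for $+$ and which ``$\gamma$-approximates heights'': for every element $c$ in the subgroup of $H_1$ generated by $\dom(q)$, either $\mathrm{ht}_{H_1}(c) = \mathrm{ht}_{H_2}(q(c)) < \gamma$, or both heights are $\ge \gamma$. (One also needs the orders of elements to match exactly, which is automatic from $q$ being a partial homomorphism and injective on the generated subgroups — or one folds it in.) The empty position is good of rank $\beta$ because $U_\alpha(H_1) = U_\alpha(H_2)$ for $\alpha < \beta$ together with matching Ulm length $\ge \beta$ forces the first $\beta$ quotients $p^\alpha H_i / p^{\alpha+1}H_i$ to have the same dimension. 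Then I would prove the \textbf{extension lemma}: given a good position $q$ of rank $\gamma$, a smaller ordinal $\gamma' < \gamma$, and a new element $a \in H_1$ (the Spoiler's move on the left; symmetrically on the right), Player II can find $b \in H_2$ so that $q \cup \{(a,b)\}$ is good of rank $\gamma'$. At a limit stage $\gamma$ with $\gamma' < \gamma$ arbitrary, one simply weakens the approximation, so the real content is when we actually add an element.

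The heart of the extension lemma — and what I expect to be the main obstacle — is the following local algebra fact about countable abelian $p$-groups: if two elements $a$ (in $H_1$) and the subgroup $G = \langle \dom(q)\rangle$ have a prescribed height pattern, and we want to hit a target $b \in H_2$ matching the height of $a$ and of all $\Z_p$-combinations $na + c$ ($c$ in $q(G)$), we must choose $b$ in the correct coset of $\langle q(G)\rangle$, of the correct height, respecting relations like ``$a \in p^\delta H_1 + c$''. This is exactly the kind of computation that underlies Ulm's theorem: one moves up the filtration, and at height $\delta < \gamma'$ uses $U_\delta(H_1) = U_\delta(H_2)$ to transport socle elements; at heights $\ge \gamma'$ one only needs the existence of \emph{some} element of height $\ge \gamma'$ in the appropriate coset, which is guaranteed since $\gamma' < \beta \le l(H_2)$ so $p^{\gamma'}H_2$ is still ``large''. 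The condition $\omega\beta = \beta$ is what lets the approximation ordinals $\gamma'$ absorb the finitely many height-increments $+1, +2, \dots$ that arise from looking at $p a, p^2 a, \dots$ when resolving a single Spoiler move, so that the class of good positions is genuinely closed under the game dynamics. Once the extension lemma is in hand, the back-and-forth system $\{\, q : q \text{ is a good position of some rank} \ge \gamma\,\}$ witnesses $H_1 \equiv^\beta H_2$ by the standard translation between back-and-forth systems and $\mathcal{L}_{\omega_1\omega}$-equivalence up to quantifier rank, completing the proof.

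(Of course, since this is quoted verbatim from \cite[Theorem~3.1]{BarEkl}, in the paper one would simply cite it; the above is the argument I would reconstruct if pressed.)
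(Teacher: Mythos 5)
The paper does not prove this lemma at all; it cites it verbatim from Barwise--Eklof, so there is no in-house argument to compare against. Your reconstruction is a reasonable sketch of the Barwise--Eklof style of proof: one does build a back-and-forth system of partial isomorphisms that approximately preserve \(p\)-heights, and matching Ulm invariants below \(\beta\) is used to transport socle elements level by level, exactly as you describe.

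Two points worth flagging. First, the role of \(\omega\beta=\beta\) is slightly misattributed: the issue is not so much absorbing the finitely many increments \(+1,+2,\dots\) coming from \(pa,p^2a,\dots\), but that the \(\mathcal L'_{\omega_1\omega}\)-formulas \(\upphi_\alpha\) defining the filtration \(p^\alpha H\) have quantifier rank on the order of \(\omega\alpha\) (this is implicit in Lemma~\ref{lem : formulas}\ref{lem : formulas-a}). The hypothesis \(\omega\beta=\beta\) is what ensures that every statement about \(p\)-heights below \(\beta\) lives at quantifier rank strictly below \(\beta\), so that the Ulm data up to \(\beta\) really can control \(\equiv^\beta\). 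Second, your ``extension lemma'' is only named, not established: the local algebraic transport step --- producing \(b\in H_2\) in the correct coset with the correct height and the correct divisibility relations to all of \(\langle q(\dom q)\rangle\) --- is precisely where the substantive work of the Barwise--Eklof proof lies, and as written your proposal asserts rather than proves it. Since the paper treats the lemma as a black box, neither gap matters for the paper's purposes, but they would need filling to count as a self-contained proof.
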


\subsection{The equivalence relation \( \E \)} \label{subsec:maindefinition}

Let \(\mathcal{L} = \{+,a_0, a_1, a_2 \dotsc\}\) be the expansion of \( \mathcal L' = \{ + \} \) obtained by adding countably many new constant symbols \(a_0, a_1, a_2 \dotsc\). 
For \(x\in X_\mathcal{L}\) let \(\mathcal{M}_x\) denote the \(\mathcal{L}\)-structure encoded by \(x\). (We continue to use the letter \(H\) to denote a group structure and the letter \(\mathcal{M}\) to denote a structure in the expanded language \(\mathcal{L}\).)

Let \(a_0, a_1, a_2, \dotsc\) name the elements of the infinite rank quasi-cyclic \(p\)-group \(\mathcal{D} = \mathbb{Z}(p^\infty)^{(\omega)}\), and construe the atomic diagram \( \mathrm{Diag}(\mathcal{D}) \) of \( \mathcal{D}\)
as an \(\mathcal{L}\)-theory. 
Let \(T'\) be the theory of abelian \(p\)-groups, and let 
\[
T = T' \cup \mathrm{Diag}(\mathcal{D}).
\]

For any model \(\mathcal{M}\) of \(T\), we denote by \(H(\mathcal{M})\) the \(\mathcal{L}'\)-reduct of \(\mathcal{M}\); by choice of \( T \), \( H(\mathcal{M}) \) is always an abelian \( p \)-group whose divisible part has infinite rank.

\begin{definition} \label{def:EdiBecker}
Let \(X=\Mod(T)\) be topologized as a subspace of \(X_\mathcal{L}\). Becker's equivalence relation \( \E \) on \( X \) is defined by setting 
\( x \mathrel{\E} y \iff H(\mathcal{M}_x) \cong H(\mathcal{M}_y) \).
\end{definition}

We now prove some facts that will be used later on.
Given \( \mathcal{M} \models T\),
let \( A(\mathcal{M}) \) be the \( \mathcal L \)-substructure of \( \mathcal{M} \) whose domain consists of the interpretations of the constants \( a_i \). Notice that \( A(\mathcal M) \models T \), and that \( H(A(\mathcal{M})) \cong \mathcal D \).

\begin{definition} \label{def : H(M)}
Let \(H\) be a countable group whose divisible part has infinite rank, and let \(n\in \omega+1\). 
Then \(\mathcal{M}(H,n)\) denotes the unique (up to isomorphism) countable model \(\mathcal{M}\) of \( T \) such that:
\begin{enumerate-(a)}
\item 
\( H(\mathcal M) = H \);
\item 
the divisible subgroup \( D(H(\mathcal{M})) \) is the direct sum of \( A(\mathcal{M})\) and a divisible \(p\)-group of rank \(n\).
\end{enumerate-(a)}
\end{definition}

In particular, if \(\mathcal{M}(H,m) \cong \mathcal{M}(H,n)\), then \(m=n\). Furthermore,
if \( \mathcal M \models T \), then there is a unique \( n \in \omega+1 \) such that \( \mathcal M \cong \mathcal M(H,n) \) for \( H = H(\mathcal M) \). We call \(n\) the \textbf{non-Ulm-invariant} of \(\mathcal{M}\).

\begin{lemma} \label{lem : Borel h}
\begin{enumerate-(a)}
\item \label{item : Borel h(a)}
\(\Mod(T)\) is a \( \cong \)-invariant \(G_\delta\)-subset of \(X_\mathcal{L}\).
\item \label{item : Borel h(b)}
There is a Borel map \(\theta \colon \Mod(T) \to \Mod(T)\) such that
\begin{enumerate-(1)}
\item 
\(\mathcal{M}_x\) and \(\mathcal{M}_{\theta(x)}\) have same Ulm length and same Ulm invariants, and
\item 
\( \mathcal M_{\theta(x)} \cong \mathcal M(H,\omega) \) for \( H = H(\mathcal M_x) \) (or, equivalently, \( H = H(\mathcal M_{\theta(x)}) \)).
\end{enumerate-(1)}
\item \label{item : Borel h(d)}
Let \(x,y\in\Mod(T)\) and \(\beta = \omega \beta \leq \min \{ l(H(\mathcal{M}_x)), l(H(\mathcal{M}_y)) \} \) be 
such that
\(U_\alpha (H(\mathcal{M}_x)) = U_\alpha (H(\mathcal{M}_y))\)  for all \(\alpha<\beta\). Then
\(\mathcal{M}_x\equiv^\beta \mathcal{M}_y \).
\end{enumerate-(a)}
\end{lemma}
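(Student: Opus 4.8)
\textbf{Proof proposal for Lemma~\ref{lem : Borel h}.}

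The plan is to treat the three parts essentially independently, as they correspond to three different flavours of argument: definability bookkeeping for \ref{item : Borel h(a)}, an explicit Borel construction for \ref{item : Borel h(b)}, and a transfer from the reduct to the expansion for \ref{item : Borel h(d)}. For part~\ref{item : Borel h(a)}, I would observe that $\Mod(T)$ is cut out inside $X_{\mathcal L}$ by the requirement that $x$ satisfy each axiom of $T$. Since $T = T' \cup \mathrm{Diag}(\mathcal D)$ and both $T'$ (the theory of abelian $p$-groups) and $\mathrm{Diag}(\mathcal D)$ are (conjunctions of countably many) axioms that are either universal or of the form ``$\forall \bar v \exists w\, \varphi$'' with $\varphi$ quantifier-free --- note $T'$ includes the $p$-group axiom $\forall v\, \bigvee_{n} p^n v = 0$, which is a countable disjunction, hence still a Borel (indeed $F_\sigma$ when read fibrewise) condition on each element --- the set $\Mod(T)$ is a countable intersection of sets each of which is $G_\delta$ (the ``$\forall\exists$'' closure conditions give $G_\delta$, the atomic-diagram conditions give clopen sets, the disjunctive $p$-torsion axiom gives an $F_{\sigma\delta}$ set... ) Here I should be slightly careful: the torsion axiom, quantified over all of $\omega$, is $\bigcap_{k\in\omega}\bigcup_{n\in\omega}\{x : p^n [k] =_x 0\}$, which is $F_{\sigma\delta}$, not obviously $G_\delta$. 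The standard fix, which I would spell out, is that $\Mod(T)$ is Borel always, and is $G_\delta$ because one can replace $T$ by a logically equivalent theory whose models form a $G_\delta$ set; concretely, the statement in the literature (e.g.\ in the Becker--Kechris framework on $\Mod(\mathcal L)$) is that for any $\mathcal L_{\omega_1\omega}$ sentence the model set is Borel, and that $\Mod$ of a countable first-order theory axiomatizable by $\Pi_2$ sentences is $G_\delta$; $T$ is of this form after the usual Skolem-type reformulation of the torsion axiom. Invariance under $\cong$ is immediate since $T$ is a theory. I expect the referee-facing subtlety here is purely the $G_\delta$ claim, and I would cite the standard fact rather than reprove it.

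For part~\ref{item : Borel h(b)}, I would build $\theta$ by composing two Borel operations. First, recall from Lemma~\ref{lem : formulas} that the Ulm length and all Ulm invariants of $H(\mathcal M_x)$ are computable from $x$ in a $\Pi^1_1$-recursive way (via the partial function $f$ of Definition~\ref{def:f} and Lemma~\ref{lem : formulas}\ref{lem : formulas-d}, together with Lemma~\ref{lem : formulas}\ref{lem : formulas-c} bounding $l(H_x)$ by $\omega_1^x$). By Ulm's theorem, a countable reduced abelian $p$-group is determined up to isomorphism by this data, and by the structure theory a countable abelian $p$-group is $R\oplus D$ with $D$ the divisible part. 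The idea is: from $x$, read off $l(H(\mathcal M_x))$ and the function $\alpha\mapsto U_\alpha(H(\mathcal M_x))$, use these to Borel-uniformly build a concrete copy of the reduced part $R(H(\mathcal M_x))$ on $\omega$, then form the $\mathcal L'$-structure $R(H(\mathcal M_x))\oplus \mathcal D$ (i.e.\ reduced part plus a divisible part of rank exactly $\omega$), and finally expand to an $\mathcal L$-structure $\mathcal M_{\theta(x)}$ by interpreting the constants $a_i$ as names for a fixed copy of $\mathcal D$ sitting inside the divisible part --- this is exactly $\mathcal M(H,\omega)$ for $H = R(H(\mathcal M_x))\oplus\mathcal D \cong H(\mathcal M_x)$ up to the rank of the divisible part, which we have just normalized to $\omega$. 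Since all constructions of countable structures from countable ordinal/cardinal data of this shape can be carried out by a Borel function into $X_{\mathcal L}$ --- the only genuinely infinitary ingredient, the ordinal $l(H(\mathcal M_x))$ and the invariants, being extracted by a $\Pi^1_1$-recursive (hence Borel) map --- $\theta$ is Borel. Condition~(1) holds by construction (we kept the same $l$ and the same $U_\alpha$'s), and condition~(2) holds because we deliberately set the rank of the divisible part to $\omega$ and arranged the constants to name an internal copy of $\mathcal D$, which is the defining property of $\mathcal M(H,\omega)$ in Definition~\ref{def : H(M)}.

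For part~\ref{item : Borel h(d)}, I would reduce to Lemma~\ref{lem : BarEkl}. That lemma gives $H(\mathcal M_x) \equiv^\beta H(\mathcal M_y)$ as $\mathcal L'$-structures, under exactly the stated hypotheses $\beta = \omega\beta \le \min\{l(H(\mathcal M_x)), l(H(\mathcal M_y))\}$ and $U_\alpha(H(\mathcal M_x)) = U_\alpha(H(\mathcal M_y))$ for all $\alpha<\beta$. What remains is to upgrade an $\equiv^\beta$-equivalence of the $\mathcal L'$-reducts to an $\equiv^\beta$-equivalence of the full $\mathcal L$-structures $\mathcal M_x, \mathcal M_y \models T$. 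The key observation is that the extra data in the $\mathcal L$-expansion is entirely captured by the interpretations of the constants $a_i$, and on the substructures $A(\mathcal M_x), A(\mathcal M_y)$ these interpretations are governed by the atomic diagram $\mathrm{Diag}(\mathcal D) \subseteq T$: since both $\mathcal M_x$ and $\mathcal M_y$ model $T$, we have $A(\mathcal M_x) \cong \mathcal D \cong A(\mathcal M_y)$ via the canonical map sending $a_i$ to $a_i$, so the ``named part'' is literally isomorphic on the nose. I would then run the standard back-and-forth / Ehrenfeucht--Fra\"iss\'e argument for $\mathcal L_{\omega_1\omega}$ of quantifier rank $\le\beta$: a winning strategy for Duplicator in the length-$\beta$ EF game on the reducts $H(\mathcal M_x), H(\mathcal M_y)$ can be combined with the fixed isomorphism on the named elements to yield a winning strategy on the full $\mathcal L$-structures, because any play involving the constants $a_i$ is forced and never hurts Duplicator (the constants always match up via the canonical iso), while plays on the remaining elements are handled by the reduct strategy. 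Here one uses that $\beta$ is closed under successor (indeed $\omega\beta = \beta$), so there is enough room to absorb the finitely many ``constant'' moves. The main obstacle I anticipate is precisely making this EF-game splicing clean: one must check that adding constants to the language does not increase the quantifier rank needed and that Duplicator's combined strategy genuinely respects $+$ and all the $a_i$ simultaneously; but since the named substructure is isomorphic via a canonical map that commutes with everything, this is a routine (if slightly fiddly) bookkeeping argument rather than a conceptual difficulty. I would present the $\mathcal M_x\equiv^\beta\mathcal M_y$ conclusion as essentially ``Lemma~\ref{lem : BarEkl} plus the named part is rigidly pinned down by $T$''.
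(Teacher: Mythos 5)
Part~\ref{item : Borel h(a)} is fine; the paper disposes of it with the remark ``by counting quantifiers,'' and your more careful axiom-by-axiom analysis reaches the same conclusion.

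Part~\ref{item : Borel h(b)} has a genuine gap. Your plan is to extract $l(H(\mathcal M_x))$ and the Ulm invariants from $x$, then Borel-uniformly rebuild a reduced part with those invariants and attach a divisible part of rank $\omega$. But the map $x \mapsto l(H(\mathcal M_x))$ cannot be coded in a Borel way: Ulm lengths range unboundedly over all countable ordinals, and any ``code'' for them would land in a $\Pi^1_1$-complete set such as $\mathsf{WO}$. The function $f$ of Definition~\ref{def:f} is a \emph{partial} $\Pi^1_1$-recursive function whose domain is itself not Borel, so the parenthetical ``$\Pi^1_1$-recursive (hence Borel)'' does not apply here. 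This is not incidental bookkeeping: it is precisely the reason isomorphism of countable $p$-groups fails to be Borel. Even granting the extraction, there is no uniform Borel construction of a reduced $p$-group realizing a prescribed Ulm sequence over an arbitrary countable ordinal. The paper sidesteps all of this with a much simpler $\theta$: send $x$ to a code for the direct sum $\mathcal M_x \oplus \mathbb{Z}(p^\infty)^{(\omega)}$ (with the constants $a_i$ still naming the old copy of $\mathcal D$ inside $\mathcal M_x$). This is visibly Borel (even continuous), it does not change the $\mathcal L'$-isomorphism type since $H(\mathcal M_x)$ already has divisible part of infinite rank, and it manifestly pushes the non-Ulm-invariant to $\omega$.

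Part~\ref{item : Borel h(d)} is in the right spirit but the splicing as you describe it does not quite go through, and the fix is exactly the step the paper does take. You apply Lemma~\ref{lem : BarEkl} to $H(\mathcal M_x)$ and $H(\mathcal M_y)$ and then try to ``combine'' Duplicator's strategy on the reducts with the canonical isomorphism on the named elements. The trouble is that the named elements $a_i^{\mathcal M_x}$ are \emph{inside} the reduct $H(\mathcal M_x)$, so a winning strategy for the $\mathcal L'$-reduct game has no reason to send $a_i^{\mathcal M_x}$ to $a_i^{\mathcal M_y}$, and there is no well-defined notion of ``plays on the remaining elements.'' The paper avoids this by first writing $\mathcal M_x = H_x \oplus A(\mathcal M_x)$ and $\mathcal M_y = H_y \oplus A(\mathcal M_y)$ as internal direct sums (this is possible because $A(\mathcal M_\cdot)\cong\mathcal D$ is divisible, hence a direct summand), observing that the complements $H_x, H_y$ still have the same Ulm length and invariants as the full reducts, applying Lemma~\ref{lem : BarEkl} to $H_x$ and $H_y$, and then playing the two strategies coordinatewise on the two disjoint summands. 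That decomposition is what makes the constant-handling clean, and your proof should include it explicitly rather than leave it at ``routine but fiddly bookkeeping.''
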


\begin{proof}
\ref{item : Borel h(a)}
By counting quantifiers.

\ref{item : Borel h(b)}
Given \( x \in \Mod(T) \), it is easy to recover in a Borel way a code for the direct sum of \( \mathcal M_x \) and \( \mathbb{Z}(p^\infty)^{(\omega)} \).

\ref{item : Borel h(d)} 
By~\cite[Propositions 7.17 and 7.47]{Vaa}, it is enough to show that Player II has a winning strategy in the 
dynamic Ehrenfeuch-Fra\"iss\'e game \( \mathrm{EFD}_\beta(\mathcal{M}_x,\mathcal{M}_y) \).
There are abelian \(p\)-groups \(H_x \), \(H_y \) such that
\[
\mathcal{M}_x = H_x \oplus A(\mathcal M_x)\qquad \text{ and }\qquad
\mathcal{M}_y = H_y \oplus A(\mathcal M_y).
\]
The structures \(A(\mathcal M_x)\) and \( A(\mathcal M_y) \) are divisible and isomorphic via the map \( f \) sending the interpretation of a constant symbol \( a_i \) in \( \mathcal M_x \) to the interpretation of the same symbol in \( \mathcal M_y \). Therefore, 
\(H_x\) and \(H_y\) have the same Ulm length and the same Ulm invariants as \( H(\mathcal{M}_x)\) and \(H(\mathcal{M}_y)\), respectively.
By Lemma~\ref{lem : BarEkl} we get that \(H_x\equiv^\beta H_y\) as \( \mathcal L'\)-structures, hence Player II has a winning strategy \( \sigma \) in \( \mathrm{EFD}_\beta(H_x,H_y) \). On the other hand, the isomorphism \( f \) yields an obvious winning strategy \( \tau \) for player II in \( \mathrm{EFD}_\beta(A(\mathcal M_x), A(\mathcal M_y)) \). Using \( \sigma \) and \( \tau \) coordinatewise, Player II easily obtains a winning strategy in the game \( \mathrm{EFD}_\beta(\mathcal{M}_x,\mathcal{M}_y) \) over the direct products \( H_x \oplus A(\mathcal M_x) \) and \( H_y \oplus A(\mathcal M_y) \).
\end{proof}

Before introducing useful notation, we recall that Zippin's theorem~\cite[Chapter~11, Theorem~1.9]{Fuc15} implies that
for any \(\beta<\omega_1\), there is a countable reduced abelian \(p\)-group \(R^\beta\) such that \(l(R^\beta)= \beta\), and all its Ulm invariants are infinite.

\begin{definition}
For \(\beta<\omega_1\) and \( n \in \omega + 1 \), let \(\mathcal{M}^{(\beta,n)}\) denote the unique (up to isomorphism) \( \mathcal{L} \)-structure \( \mathcal{M} \) such that:
\begin{enumerate-(a)}
\item 
\( \mathcal{M} \models T \);
\item 
\(l(H(\mathcal{M})) = \beta\); 
\item 
\(U_\alpha (H(\mathcal{M}))= \omega \) for all \(\alpha<\beta\);
\item 
the non-Ulm-invariant of \(\mathcal{M}\) is \(n\).
\end{enumerate-(a)}
\end{definition}

For \(\beta<\omega_1\) and \(n\in\omega+1\), we let
\begin{align*}
\widetilde{\mathcal{M}}^{(\beta,n)} & =\{x\in\Mod(T) : \mathcal{M}_x\cong \mathcal{M}^{(\beta,n)}\} \\
\widetilde H^\beta & = \bigcup\{\widetilde{\mathcal{M}}^{(\beta,n)}:n \in \omega+1\} \\
\widetilde S & = \bigcup\{\widetilde{\mathcal{M}}^{(\beta,\omega)}:\beta<\omega_1\}
\end{align*}

\begin{lemma} \label{lem : S tilde}
The set \(\widetilde S\) is \(\boldsymbol{\Sigma}^1_1\).
\end{lemma}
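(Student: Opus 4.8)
The plan is to show that membership in $\widetilde{S}$ can be expressed by a formula with a single existential real quantifier over an arithmetic (indeed Borel) matrix. The key observation is that, by the definition of $\mathcal{M}^{(\beta,\omega)}$, a point $x \in \Mod(T)$ lies in $\widetilde{S}$ if and only if $H(\mathcal{M}_x)$ is a reduced abelian $p$-group all of whose Ulm invariants are infinite, and the non-Ulm-invariant of $\mathcal{M}_x$ is $\omega$. (The condition ``$H(\mathcal{M}_x)$ is reduced'' is not automatic from $x \in \Mod(T)$, since $H(\mathcal M)$ always has infinite divisible part; what is needed is that $D(H(\mathcal M_x))$ is exactly the divisible hull of $A(\mathcal M_x)$, i.e.\ the non-Ulm-invariant is the largest possible value $\omega$ — this is precisely clause (d) in the definition of $\mathcal M^{(\beta,n)}$ together with the fact that $\widetilde S$ ranges over all countable $\beta$.)

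The concrete steps I would carry out are as follows. First, I would produce a witness-code: $x \in \widetilde S$ iff there exists $y \in 2^\omega$ with $y \in \mathsf{WO}$, $|y| = l(H(\mathcal M_x))$, and moreover $U_\alpha(H(\mathcal M_x)) = \omega$ for every $\alpha < |y|$, and the non-Ulm-invariant of $\mathcal M_x$ equals $\omega$. By Lemma~\ref{lem : formulas}\ref{lem : formulas-c} the Ulm length of $H(\mathcal M_x)$ is at most $\omega_1^x$, so such a well-order code $y$ exists whenever $x \in \widetilde S$; conversely the existence of such a $y$ pins down $l(H(\mathcal M_x))$ and all the Ulm invariants, hence the isomorphism type of the reduced part, and together with the non-Ulm-invariant being $\omega$ it forces $\mathcal M_x \cong \mathcal M^{(|y|,\omega)}$ by Ulm's theorem (in the form recalled before Lemma~\ref{lem : formulas}) and the uniqueness in Definition~\ref{def : H(M)}. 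Second, I would check that the matrix ``$y \in \mathsf{WO}$ and $|y| = l(H(\mathcal M_x))$ and $\forall \alpha < |y|\ (U_\alpha(H(\mathcal M_x)) = \omega)$ and (non-Ulm-invariant of $\mathcal M_x$ is $\omega$)'' is Borel in $(x,y)$. The predicate ``$y \in \mathsf{WO}$ and $U_{|y|}(H_x)$ is defined'' unwinds via the $\Pi^1_1$-recursive function $f$ of Definition~\ref{def:f} and Lemma~\ref{lem : formulas}\ref{lem : formulas-d}; but since the transfinite sequence $(p^\beta H_x \mid \beta \le |y|)$ is computable recursively-in-$(x,y)$ once $y \in \mathsf{WO}$ is assumed, and the $U_\alpha$ predicates are arithmetic in these parameters, the whole matrix is in fact $\boldsymbol{\Delta}^1_1$ relative to the closed condition $y \in \mathsf{WO}$, hence Borel. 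Finally, ``the non-Ulm-invariant of $\mathcal M_x$ equals $\omega$'' is Borel: it says $D(H(\mathcal M_x))$ has infinite rank while the divisible hull of $A(\mathcal M_x)$ inside $\mathcal M_x$ already accounts for all of it — equivalently, every element of $\mathcal M_x$ of infinite $p$-height lies in the divisible hull of $\{a_i\}$, which is an arithmetic condition on $x$ (in fact expressible by an $\mathcal L_{\omega_1\omega}$ sentence of the sort in Lemma~\ref{lem : formulas}). Putting it together, $x \in \widetilde S \iff \exists y \in 2^\omega\, \Phi(x,y)$ with $\Phi$ Borel, so $\widetilde S$ is $\boldsymbol{\Sigma}^1_1$.

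The main obstacle I anticipate is bookkeeping rather than conceptual: carefully verifying that the ``initial segment'' of the Ulm sequence $(p^\beta H_x \mid \beta \le |y|)$ really is computed uniformly Borel-in-$(x,y)$ on the closed set where $y$ codes a well-order — this is the content behind Lemma~\ref{lem : formulas}\ref{lem : formulas-d} and just needs to be invoked correctly rather than reproved — and, slightly more delicately, getting the quantifier-complexity accounting right so that the single $\exists y$ is not accidentally buried under a hidden universal real quantifier. The one genuinely substantive point to be careful about is that demanding $|y| = l(H(\mathcal M_x))$ exactly (rather than $|y| \le l(H(\mathcal M_x))$, which would only give a lower bound on the Ulm length) is both expressible in the allowed complexity and necessary: it is what guarantees $H(\mathcal M_x)$ is reduced with Ulm length precisely $|y|$, so that the isomorphism type is actually $\mathcal M^{(|y|,\omega)}$ and not merely something with a long enough Ulm sequence.
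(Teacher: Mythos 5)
The proposal has a genuine gap at its central step, and it is precisely the point where the paper's proof does real work. Your final formula is $x \in \widetilde S \iff \exists y\, \Phi(x,y)$ where $\Phi(x,y)$ includes the clause ``$y \in \mathsf{WO}$'', and you argue that $\Phi$ is Borel because the rest of the matrix becomes $\boldsymbol{\Delta}^1_1$ ``relative to the closed condition $y \in \mathsf{WO}$.'' But $\mathsf{WO}$ is not closed --- it is $\Pi^1_1$-complete --- and a set that is Borel when relativized to a $\Pi^1_1$ set is not Borel. With $\Phi$ containing a $\Pi^1_1$ conjunct, the formula $\exists y\, \Phi(x,y)$ a priori only witnesses $\boldsymbol{\Sigma}^1_2$, not $\boldsymbol{\Sigma}^1_1$, so the approach as written does not prove the lemma. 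What is missing is exactly the boundedness argument: you observe (correctly, citing Lemma~\ref{lem : formulas}\ref{lem : formulas-c}) that $l(H(\mathcal M_x)) \le \omega_1^x$, but you do not use this to \emph{bound the quantifier}. The paper does: it replaces the blind $\exists y$ with $\exists y \in \Delta^1_1(x)$, computing suitable well-order codes from $\Delta^1_1(x)$ reals, and then appeals to the restricted-quantification theorem~\cite[Theorem 4D.3]{Mos}, which says precisely that $\Delta^1_1(x)$-bounded existential quantification preserves $\Pi^1_1$. That step is the heart of the proof, and your write-up leaves it out while asserting the Borelness that would render it unnecessary.

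A secondary problem: you assert that ``the non-Ulm-invariant of $\mathcal M_x$ equals $\omega$'' is Borel and sketch it as ``every element of infinite $p$-height lies in the divisible hull of $\{a_i\}$.'' That is the condition for the non-Ulm-invariant to equal $0$, not $\omega$: the non-Ulm-invariant $n$ is the rank of a complement of $A(\mathcal M_x)$ inside $D(H(\mathcal M_x))$, so $n = \omega$ means there are \emph{arbitrarily many} independent divisible elements \emph{outside} the divisible hull of $\{a_i\}$. The paper only claims this set is analytic, not Borel, and for the proof it suffices that it be analytic; you should not lean on a Borelness claim that has not been established.
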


\begin{proof}
The set of all \( x \in \Mod(T) \) such that the non-Ulm-invariant of \( \mathcal{M} \) is \( \omega \) is easily seen to be analytic, therefore we have only to show that the set
\[  
P = \{ x \in \Mod(T) : \forall \alpha < l(H(\mathcal{M}_x)) \, [U_\alpha(H(\mathcal{M}_x)) = \omega] \}
\]
is analytic too. The set \( \{ (y,x) \in 2^\omega\times 2^\omega  \mid y \in \Delta^1_1(x) \} \) is a proper \( \Pi^1_1 \)-set (\cite[Exercises 4D.14 and 4D.16]{Mos}). Thus there is a computable function \( g \colon 2^\omega\times 2^\omega \to \mathsf{LO} \) such that \( y \in \Delta^1_1(x) \iff g(y,x) \in \mathsf{WO} \), and for each \( x \in 2^\omega \) we have 
\[ 
\sup \{ |g(y,x)| : y \in \Delta^1_1(x) \} = \omega_1^x . 
\]
Let \( h \colon \mathsf{LO} \times \omega \to \mathsf{LO} \) be a \( \Delta^1_1 \)-map sending \( (y,k) \) to a code for \( <_y \restriction \{ m \in \omega \mid m <_y k \} \).
By Lemma~\ref{lem : formulas}\ref{lem : formulas-c}, we have
\[
x \notin P \iff x \notin \Mod(T) \vee \exists y \in \Delta^1_1(x) \, \exists n , k \in \omega \, [f(x,h(g(y,x),k)) = n],
\]
where \( f \) is as in Definition~\ref{def:f}. 
By Lemma~\ref{lem : formulas}\ref{lem : formulas-d}, the graph of \( f \) is \( \Pi^1_1 \). Then, by~\cite[Theorem 4D.3]{Mos}, the set of \(x\in \Mod(T)\) such that 
\[\exists y \in \Delta^1_1(x) \, \exists n , k \in \omega \, [f(x,h(g(y,x),k)) = n]\]
is \(\Pi^1_1.\)
It follows that the complement of \( P \) is coanalytic, and hence \( P \) is analytic.
\end{proof}

\subsection{Proof of Theorem~\ref{thm : main}}
\label{sec:Becker}

Recall that \( X = \Mod(T) \subseteq X_{\mathcal{L}} \), and that the map \( H \) associates to every \( \mathcal{M} \models T\) its \( \mathcal{L}'\)-reduct \( H(\mathcal{M}) \).
Notice that \( H \) can be construed as a Borel map \( X \to X_{\mathcal{L}'} \), hence it witnesses that \( \E \) is Borel reducible to the isomorphism relation on countable abelian \( p \)-groups. This readily implies that \( \E \) is an analytic equivalence relation all of whose equivalence classes are Borel, and that \( \E \) is classifiable by Ulm invariants. 

Notice also that by the observation after Definition~\ref{def : H(M)}, each \( E \)-equivalence class is the union of a countably infinite set of isomorphism classes of (codes for) \( \mathcal{L} \)-structures whose \( \mathcal{L'} \)-reducts are abelian \( p \)-groups with the same Ulm length and the same Ulm invariants but arbitrary non-Ulm-invariant. More in detail, for any \(\E\)-equivalence class \(C\subseteq X\) and \(n\in \omega+1\), let \( C_n \) be the collection of all \( x \in C \) such that the non-Ulm-invariant of \(\mathcal{M}_x\) is \(n\). Then \( \{ C_n : n \in \omega+1 \} \) is a partition of \( C \), and each \( C_n \) is a \(\cong \)-equivalence class. \emph{In particular, each \( \widetilde H^\beta \) constitutes an \( \E \)-equivalence class.}

To prove that \( \E \) is idealistic, note that
the Borel map \(\theta \colon X \to X\) from Lemma~\ref{lem : Borel h}\ref{item : Borel h(b)} selects an \(\cong_\mathcal{L}\)-equivalence class within every \(\E\)-class, hence we can apply Proposition~\ref{prop:select}.

Finally, we show that \( \E \) does not classwise Borel embed into any orbit equivalence relation.
This is obtained by combining Lemmas~\ref{lem:steel} and~\ref{lemma:contradiction1} below. 
The following result first appeared as Lemma~3.2 in~\cite{Bec}, where it is attributed (essentially) to Steel.

\begin{lemma} \label{lem:steel}
Assume \(\boldsymbol{\Sigma}^1_1\)-determinacy. Then for club many \(\beta< \omega_1\), the set  \(\widetilde H^\beta\) is not \(\boldsymbol{\Pi}^0_{\beta+1}\).
\end{lemma}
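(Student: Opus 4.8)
The plan is to show that if $\widetilde H^\beta$ were $\boldsymbol{\Pi}^0_{\beta+1}$ for a non-club-many set of $\beta$, then on a stationary set we could use Steel's forcing with tagged trees to build, for such a $\beta$, two models $\mathcal M_x, \mathcal M_y \models T$ which are forced to be $\equiv^\beta$-equivalent — hence agree on all $\mathcal L_{\omega_1\omega}$-sentences of quantifier rank $\leq\beta$, and in particular on membership in any $\boldsymbol{\Pi}^0_{\beta+1}$ set, once one codes such sets by sentences of the appropriate rank — yet have \emph{different} Ulm lengths (say $l(H(\mathcal M_x)) = \beta$ and $l(H(\mathcal M_y)) = \beta+1$, or more precisely $\beta < l(H(\mathcal M_y))$), so that $x \in \widetilde H^\beta$ but $y \notin \widetilde H^\beta$. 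This contradiction would pin down the set of bad $\beta$ as containing a club. So the real content is: for club-many $\beta$, a $\boldsymbol{\Pi}^0_{\beta+1}$ set cannot separate models of $T$ whose reducts have Ulm invariants $U_\alpha = \omega$ for all $\alpha < \beta$ from those that additionally have length exactly $\beta$.

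First I would recall the reduction of Borel (hence $\boldsymbol{\Pi}^0_{\beta+1}$) pointclasses to $\mathcal L_{\omega_1\omega}$-definability: a $\boldsymbol{\Pi}^0_{\beta+1}$ subset of $\Mod(T)$ is, up to the standard coding of structures by elements of $X_{\mathcal L}$, definable by an $\mathcal L_{\omega_1\omega}$-sentence of quantifier rank bounded by a fixed (small, e.g. $\beta + c$ for some finite or small ordinal $c$) ordinal; shrinking to a club of $\beta$ closed under $\alpha \mapsto \omega^\alpha$ and additively closed, I can arrange that $\equiv^\beta$ already refutes membership being preserved if the two sides of the forcing disagree on it. Then I would invoke Lemma~\ref{lem : Borel h}\ref{item : Borel h(d)}: any two $x, y \in \Mod(T)$ with $l(H(\mathcal M_x)), l(H(\mathcal M_y)) \geq \beta$, $\beta = \omega\beta$, and matching Ulm invariants below $\beta$ satisfy $\mathcal M_x \equiv^\beta \mathcal M_y$. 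Combining with Zippin's theorem (the groups $R^\beta$) and Definition of $\mathcal M^{(\beta,n)}$, I produce explicit witnesses: $\mathcal M^{(\beta,\omega)}$ has reduct of length exactly $\beta$, while $\mathcal M^{(\beta',\omega)}$ for any $\beta < \beta' < \omega_1$ has reduct of length $\beta' > \beta$, all Ulm invariants $\omega$, and by Lemma~\ref{lem : Borel h}\ref{item : Borel h(d)} the two are $\equiv^\beta$. Since the first is in $\widetilde H^\beta$ and the second is not, no $\mathcal L_{\omega_1\omega}$-sentence of quantifier rank $\leq\beta$ — hence (after the coding reduction) no $\boldsymbol{\Pi}^0_{\beta+1}$ set — can equal $\widetilde H^\beta$, for every $\beta = \omega\beta$, which is a club.

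Actually, on reflection the purely Scott-analysis argument above already seems to give the club of $\beta = \omega\beta$ without any determinacy — so I expect the role of $\boldsymbol{\Sigma}^1_1$-determinacy, and of Steel's tagged-tree forcing, is subtler: it must be needed to handle the \emph{quantifier-rank bookkeeping} in the reduction from $\boldsymbol{\Pi}^0_{\beta+1}$ to $\mathcal L_{\omega_1\omega}$-sentences, i.e.\ to show that the rank of the defining sentence can be taken to be exactly $\beta+1$ rather than something like $\beta \cdot \omega$ (the naive bound), and possibly to obtain the sharper index. The forcing with tagged trees produces a generic pair of structures with controlled Scott rank along the lines of Steel's work on $\boldsymbol{\Sigma}^1_1$-bounding, and $\boldsymbol{\Sigma}^1_1$-determinacy enters through the closure/boundedness principles that make the generic construction land at the right countable stage. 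Hence my plan would follow Becker's outline: set up Steel's forcing (as in the Appendix), force two tagged trees coding models of $T$ with reducts having all Ulm invariants $\omega$ below $\beta$ but different Ulm lengths, use genericity plus $\boldsymbol{\Sigma}^1_1$-determinacy to show the generic pair cannot be separated by any $\boldsymbol{\Pi}^0_{\beta+1}$ set, and conclude that $\widetilde H^\beta \notin \boldsymbol{\Pi}^0_{\beta+1}$ for club-many $\beta$.

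The main obstacle, I expect, is precisely this quantifier-rank calculus inside the tagged-tree forcing: controlling the Scott rank of the generically produced structures tightly enough to beat the $\boldsymbol{\Pi}^0_{\beta+1}$ bound (rather than a coarser $\boldsymbol{\Pi}^0_{\gamma}$ for larger $\gamma$), while simultaneously ensuring the two generic structures have \emph{different} Ulm lengths — these two requirements pull in opposite directions, since making the lengths differ wants to put extra structure (more $p^\alpha H$-layers) on one side, which threatens to push up its complexity. Reconciling them is where Steel's technique, and the determinacy hypothesis, do the real work.
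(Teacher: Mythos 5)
Your opening Scott-analysis argument has a genuine flaw, and the error is one the paper itself implicitly exhibits. You claim that two models in $Y$ (both with Ulm invariants $\omega$ below $\beta$ and length $\geq\beta$) are $\equiv^\beta$ and therefore ``agree on membership in any $\boldsymbol{\Pi}^0_{\beta+1}$ set, once one codes such sets by sentences of the appropriate rank.'' But the rank of the coding sentence is not $\leq\beta$: the sharpest bound used in the paper (via Kechris' Theorem 16.8 / Proposition 16.9, see Claim~\ref{claim4}) is quantifier rank $\omega\alpha$ for a $\cong$-invariant set of Borel rank $\alpha$, so for a $\boldsymbol{\Pi}^0_{\beta+1}$ set this is about $\omega(\beta+1) = \beta+\omega$, which strictly exceeds $\beta$ even when $\omega\beta=\beta$. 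Consequently $\equiv^\beta$ does \emph{not} decide membership in $\boldsymbol{\Pi}^0_{\beta+1}$ invariant sets. Indeed Claim~\ref{claim4} proves that $\widetilde{\mathcal M}^{(\beta,0)}$ \emph{is} $\boldsymbol{\Pi}^0_{\beta+1}$, while $\mathcal M^{(\beta,0)}\equiv^\beta\mathcal M^{(\beta,1)}$ by Lemma~\ref{lem : Borel h}\ref{item : Borel h(d)} yet only one lies in $\widetilde{\mathcal M}^{(\beta,0)}$ --- a direct counterexample to the principle you rely on. So the ``determinacy-free'' shortcut does not exist; the lemma really does need the forcing machinery.

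Your second half identifies the right ingredients (Steel's tagged-tree forcing, determinacy, Becker's outline) but misses the mechanism by which they interact, and your description of ``forcing two tagged trees coding models of $T$'' is not what actually happens. The paper introduces the game $\mathcal{G}(\widetilde S)$ in which Player II aims to produce $x\in\widetilde S$ with $\omega_1^{[x]}\geq\omega_1^w$; $\boldsymbol\Sigma^1_1$-determinacy gives determinacy of this $\boldsymbol\Sigma^1_1$ game, and a Gandy-basis argument (Claim~\ref{claim2}) rules out a winning strategy for Player I, so II has a winning strategy $\sigma$. This yields a \emph{continuous} map $F(w)=w*\sigma$ sending arbitrary reals into $\widetilde S$ with $\omega_1^w\leq\omega_1^{[F(w)]}\leq\omega_1^{\langle w,\sigma\rangle}$. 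Steel's forcing then enters: if $g$ is $\mathbb P_\beta$-generic over $L(\sigma)$ for $\sigma$-admissible $\beta$, then $\omega_1^{t(g)}=\omega_1^{\langle t(g),\sigma\rangle}=\beta$ (Fact~\ref{fact:omega1}), forcing $F(t(g))\in\widetilde M^{(\beta,\omega)}$. Taking the next $\gamma\in W$ lands $\mathbb P_\gamma$-generics in $\widetilde M^{(\gamma,\omega)}$. So $F^{-1}(\widetilde H^\beta)$ separates $\mathbb P_\beta$-generics from $\mathbb P_\gamma$-generics, and Proposition~\ref{prop : complexity separating set} (the retagging lemma) says no $\boldsymbol{\Pi}^0_{\beta+1}$ set can do that. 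Your worry that ``the two requirements pull in opposite directions'' evaporates in this setup: the different Ulm lengths come from generics for \emph{different} posets $\mathbb P_\beta$ and $\mathbb P_\gamma$, not from a single forcing trying to control both, and there is no delicate Scott-rank calculus to do on the generic side --- the complexity bound is the retagging lemma, applied once, to the pullback $F^{-1}(\widetilde H^\beta)$.
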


\begin{proof}
Recall that \(X_\mathcal{L}\) is recursively homeomorphic to the Cantor space \(2^\omega \); for the purposes of this proof, it is convenient to actually identify \(X_\mathcal{L}\) with \(2^\omega\).
For \(x\in X_\mathcal{L}\), let
\[
\omega_1^{[x]} = \inf\{\omega_1^y: \mathcal{M}_y \cong \mathcal{M}_x\}.
\]

Consider the following game \(\mathcal{G}({\widetilde{S}})\). Players I and II each play elements of \(2=\{0,1\}\), alternating moves in the usual way:

\begin{center}
\begin{tabular}{c|cccccccccc}
Player I & \(w_0\) && \(w_1\)&\(\dotsm\)&& \(w_{n}\) &&\(\dotsm\)&\\
\hline
Player II & & \(x_0\) &&&\(\dotsm\)&&\(x_{n}\)&&\(\dotsm\)
\end{tabular}
\end{center}
    
After infinitely many moves, Player I has played \(w = (w_n)_{n \in \omega} \in 2^\omega\), and Player II has played \(x = (x_n)_{n \in \omega} \in 2^\omega\). Player II wins the round of \(\mathcal{G}(\widetilde S)\) if
    
\[
x\in \widetilde S \quad\text{ and }\quad \omega_1^{[x]} \geq \omega_1^w.
\]
Set \( B = \{(x,w): \omega_1^{[x]} \geq \omega_1^w\} \).

\begin{claim} \label{claim1}
The set \( B \) is \( \Sigma^1_1 \) and the game \(\mathcal{G}(\widetilde S)\) is determined.
\end{claim}

\begin{proof}[Proof of the Claim]
Let \(j_\mathcal{L}\colon S_\infty\curvearrowright X_\mathcal{L}\) be the logic action, which is continuous and induces \( \cong \) as its associated orbit equivalence relation. By \cite[Theorem~3.2]{Sam94}, for any \(x\in X_\mathcal{L}\) the set \(\{g\in S_\infty:\omega_1^{j_\mathcal{L}(g,x)}=\omega_1^{[x]}\}\) is comeager. We claim that
\begin{equation} \label{eq:nonmeager}
\omega_1^{[x]} \geq \omega_1^w \iff \{g\in G: \omega_1^{j_\mathcal{L}(g,x)}\geq \omega_1^w\}\text{ is not meager}.
\end{equation}
The forward direction is obvious. For the backward direction, notice that if \( \omega_1^{[x]} < \omega_1^w \), then \( \{g\in G: \omega_1^{j_\mathcal{L}(g,x)}\geq \omega_1^w\} \) is contained in \(\{g\in S_\infty:\omega_1^{j_\mathcal{L}(g,x)}>\omega_1^{[x]}\}\).

Since the logic action \( j_\mathcal{L} \) is actually recursive, the set \( \{ (g,x,w) \in S_\infty \times (2^\omega)^2 \mid \omega_1^{j_\mathcal{L}(g,x)}\geq \omega_1^w \} \) is a recursive preimage of \( \{ (y,z) \in (2^\omega)^2 \mid \omega_1^y \leq \omega_1^z \}\); since the latter is \( \Sigma^1_1 \) (\cite[Lemma A.1.1]{Gao}) and  \( \Sigma^1_1\) is closed under category quantifiers (see e.g.\ \cite{Kec73} and \cite[Exercise 4F.19]{Mos}), the set \( B \) is \( \Sigma^1_1 \) by the equivalence~\eqref{eq:nonmeager}. 

The set \(\widetilde S\) is analytic by Lemma~\ref{lem : S tilde},
and hence so is the payoff set for Player II. 
Therefore we are done by \(\boldsymbol{\Sigma}^1_1\)-determinacy.
\end{proof}

\begin{claim} \label{claim2}
Player I does not have a winning strategy in \(\mathcal{G}(\widetilde S)\).
\end{claim}

\begin{proof}[Proof of the Claim]
Towards a contradiction, assume that \( \sigma \) is a winning strategy for Player I in \(\mathcal{G}(\widetilde S)\). Let \(a \in 2^\omega\) be such that \(\widetilde S\) is \(\Sigma^1_1(a)\). 
Therefore the set
\[
V= \{
x\in2^\omega : x\in \widetilde S \text{ and } \omega_1^{\langle \sigma, a\rangle} \leq \omega_1^{[x]}
\}
\]
is \(\Sigma^1_1(\langle\sigma,a\rangle )\) because it is the intersection of \( \widetilde S \) with the horizontal section of \( B \) determined by the point \( \langle \sigma, a \rangle \).
By Lemma~\ref{lem : formulas}\ref{lem : formulas-c}, if \( x \in \widetilde{M}^{(\beta,\omega)}\) and \( z \in X_{\mathcal{L}'} \) is such that \( H_z = H(\mathcal{M}_x) \), then 
\[
\omega_1^x \geq \omega_1^z \geq l(H_z) = l(H(\mathcal{M}_x)) =\beta.
\]
Therefore, \(\sup \{ \omega_1^{[x]} : x\in\widetilde S\} = \omega_1\), so that
\(V\neq \emptyset\). 
By the Gandy Basis Theorem\footnote{The Gandy basis theorem~\cite{Gan60} states that for every \( y \in 2^\omega \), each nonempty \(\Sigma^1_1(y)\) set
contains some element \(x_0\) with \(\omega_1^{\langle y, x_0\rangle}\leq \omega_1^{y}\).}, there is \(x_0 \in V\) such that \(\omega_1^{\langle \sigma, a,x_0\rangle} \leq \omega_1^{\langle \sigma, a\rangle}\). Consider a round of the game in which Player II plays such a \(x_0 \in V \subseteq \widetilde S\) and Player I, following the strategy \(\sigma\), plays some \( w \in 2^\omega\). Clearly, \(w\) is recursive-in-\(\langle\sigma,x_0\rangle\), hence
\[
\omega_1^w \leq
\omega_1^{\langle\sigma,x_0 \rangle}
\leq \omega_1^{\langle \sigma, a,x_0\rangle}\leq
\omega_1^{\langle\sigma,a \rangle} \leq
\omega_1^{[x_0]}.\]
Thus, Player II wins this round of the game, contradicting the fact that \(\sigma\) is winning for Player I.
\end{proof}

Let \(v \in 2^\omega\). Recall that a countable ordinal \(\alpha\) is \markdef{\(v\)-admissible} if there is a \(y \in  2^\omega\) such that \(\alpha = \omega_1^{\langle v,y\rangle}\).

\begin{claim} \label{claim3}
If Player II has a winning strategy \( \sigma \) in \(\mathcal{G}(\widetilde S)\), then 
\[
\{ \beta< \omega_1 : \beta \text{ is \(\sigma\)-admissible and } \widetilde H^\beta \text{ is not } \boldsymbol{\Pi}^0_{\beta+1} \}
\]
contains a club.
\end{claim}

\begin{proof}[Proof of the Claim] 
Let \(F\colon 2^\omega\to 2^\omega\) be the function \(F(w)= w\ast \sigma\).
Let
\[
W = \left\{\beta<\omega_1 : \beta \text{ is \(\sigma\)-admissible and } \omega_1^{[x]}<\beta \text{ for all \(x\in \bigcup\nolimits_{\alpha < \beta} \widetilde M^{(\alpha,\omega )}\)} \right\}.
\]
By~\cite[Theorem 0.11]{Bec94}, the set \( \{ \beta < \omega_1 : \beta \text{ is \( \sigma \)-admissible} \} \) contains a club.
Using standard arguments, one sees that also 
\[
\left\{ \beta < \omega_1 : \omega_1^{[x]}<\beta \text{ for all \(x\in \bigcup\nolimits_{\alpha < \beta} \widetilde M^{(\alpha,\omega )}\)} \right\}
\] 
contains a club.
Therefore \(W\) contains a club. 

\emph{In what follows, we heavily use some notation and standard arguments related to Steel's forcing; the reader unfamiliar with such technique might consider reading Appendix~\ref{appendix} before continuing with the rest of this proof.}
Fix any \( \beta \in W \) and denote by \(\mathbb{P}_\beta\) the Steel's forcing poset of level \(\beta\). Let \(g\) be \( \mathbb{P}_\beta \)-generic over \( L(\sigma) \), and let \(w \in2^{\omega}\) be the (code for the) tree \( t(g) \) obtained from \( g \) by removing its tags. (The existence of generics for \(\mathbb{P}_\beta\) is discussed in Proposition~\ref{prop:existenceofgenerics}.)
We are going to show that \(F(w)\in\widetilde M^{(\beta,\omega)}\).
Since \( \beta \) is \( \sigma \)-admissible, we have \(\omega_1^w = \omega_1^{\langle w,\sigma \rangle}= \beta\). (Fact~\ref{fact:omega1}.) 
Since \(\sigma\) is a winning strategy for Player II, we have \(F(w) \in \widetilde S\) and \(\omega_1^{[F(w)]}\geq \beta\). 
On the other hand, \(F(w)\) is recursive-in-\(\langle w,\sigma\rangle\), hence \(\omega_1^{[F(w)]} \leq \omega_1^{F(w)} \leq \omega_1^{\langle w, \sigma \rangle }= \beta\). Hence, \( \omega_1^{[F(w)]} = \beta \). 
By definition of \(\widetilde S\) and \( F(w) \in \widetilde S \), there is a unique \( \alpha < \omega_1 \) such that \( F(w) \in \widetilde M^{(\alpha,\omega )}\).  Let \( x \in [F(w)]_{\cong} = \widetilde M^{(\alpha,\omega)} \) be such that \( \omega_1^x = \omega_1^{[F(w)]} \). It follows from Lemma~\ref{lem : formulas}\ref{lem : formulas-c} that 
\[
\alpha = l(H(\mathcal M_x)) \leq \omega_1^x = \omega_1^{[F(w)]} = \beta.
\]
But the case \( \alpha < \beta \) is excluded by the definition of \( W \),
hence \( \alpha = \beta \) and \(F(w) \in \widetilde M^{(\beta, \omega)}\).

We now prove that \( \widetilde H^\beta \notin \boldsymbol{\Pi}^0_{\beta+1} \) for every \( \beta \in W \), which concludes the proof of the claim. 
Let \(\beta\in W\) be arbitrary, and let \(\gamma\) be the least element of \(W\) greater than \(\beta\). If \( \widetilde H^\beta \in \boldsymbol{\Pi}^0_{\beta+1} \), then also \(Q = F^{-1}(\widetilde H^{\beta}) \in \boldsymbol{\Pi}^0_{\beta+1} \) because \( F \) is continuous.
Since \(\widetilde M^{(\beta,\omega)}\subseteq \widetilde H^\beta\) and \(\widetilde M^{(\gamma,\omega) }\cap \widetilde H^\beta=\emptyset\), the previous paragraph shows that, in the jargon of Steel's forcing, \( Q \) separates \( \mathbb P_\beta\)-generics from \( \mathbb P_\gamma \)-generics over \( L(\sigma) \) (see the paragraph after Fact~\ref{fact:omega1} for the precise definition). But this contradicts Proposition~\ref{prop : complexity separating set}, hence \( \widetilde H^\beta \notin \boldsymbol{\Pi}^0_{\beta+1} \).
\end{proof}
Putting Claims~\ref{claim1}, \ref{claim2}, and~\ref{claim3} together, the proof of Lemma~\ref{lem:steel} is complete.
\end{proof}

The following lemma is instead modeled after~\cite[Lemma~3.1]{Bec}. Together with Lemma~\ref{lem:steel}, it shows that under \(\boldsymbol{\Sigma}^1_1\)-determinacy the equivalence relation \( \E \) does not classwise Borel embed into any orbit equivalence relation, and thus it concludes the proof of Theorem~\ref{thm : main}. We will use the following simple observation.

\begin{lemma}[Folklore] \label{lem:opentransformed}
Let \( E_a \) be the orbit equivalence relation induced by a continuous action \( a \colon G \times Z \to Z \) of a Polish group \( G \) on a Polish space \( Z \). For every open set \( U \subseteq X \), its Vaught's transform 
\begin{align*} 
U^{\triangle} & = \{ z \in Z : \exists^* g \in G \, (a(g,z) \in U) \} \\
& = \{ z \in Z : \{ g \in G : a(g,z) \in U \} \text{ is not meager} \}
\end{align*}
coincides with its \( E_a \)-saturation \( [U]_{E_A} \).
\end{lemma}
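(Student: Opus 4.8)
The plan is to prove the two displayed characterizations of $U^{\triangle}$ by a direct double inclusion, using the fact that for a \emph{continuous} action the preimage maps $g \mapsto a(g,z)$ are continuous and hence Baire-category-preserving in the appropriate sense. First I would record that the two formulations of $U^{\triangle}$ coincide trivially by unwinding the definition of $\exists^*$, so the real content is the identity $U^{\triangle} = [U]_{E_a}$.

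For the inclusion $[U]_{E_a} \subseteq U^{\triangle}$, take $z \in [U]_{E_a}$, so there is $h \in G$ with $a(h,z) \in U$. By continuity of the action, $\{ g \in G : a(g,z) \in U \}$ is an open subset of $G$ (it is the preimage of the open set $U$ under the continuous map $g \mapsto a(g,z)$), and it is nonempty since it contains $h$. A nonempty open subset of a Polish group is nonmeager, so $z \in U^{\triangle}$. For the reverse inclusion $U^{\triangle} \subseteq [U]_{E_a}$, suppose $z \in U^{\triangle}$; then in particular $\{ g \in G : a(g,z) \in U \}$ is nonempty (a meager set, and a fortiori the empty set, cannot be nonmeager), so there is $g$ with $a(g,z) \in U$, whence $a(g,z) \mathrel{E_a} z$ and $a(g,z) \in [U]_{E_a}$; but $[U]_{E_a}$ is $E_a$-invariant, so $z \in [U]_{E_a}$.

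I anticipate the only subtle point is making sure the category hypotheses are used in the right direction: one needs that $U^{\triangle}$ is \emph{contained} in $[U]_{E_a}$, which uses nothing about category (just nonemptiness), while the opposite containment genuinely uses that nonempty open subsets of $G$ are nonmeager, i.e.\ the Baire category theorem for the Polish group $G$. Continuity of $a$ is exactly what guarantees $\{ g : a(g,z) \in U\}$ is open for open $U$; if one only assumed Borelness of the action this would fail and the statement would need the more elaborate Vaught-transform machinery. Since here the lemma explicitly assumes a continuous action on a Polish space, the argument above is complete and elementary; this is why the paper labels it folklore and flags it only as a ``simple observation.''
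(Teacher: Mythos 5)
Your proof is correct and follows essentially the same route as the paper's: for the nontrivial inclusion $[U]_{E_a} \subseteq U^{\triangle}$, both use continuity of the action to see that $\{g \in G : a(g,z) \in U\}$ contains a nonempty open set and is hence nonmeager by the Baire category theorem for $G$. The only cosmetic difference is that the paper extracts an open neighborhood $V$ of the witnessing $g$ while you observe directly that the whole preimage is open; your treatment of the easy reverse inclusion, which the paper leaves implicit, is also correct.
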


\begin{proof}
For the nontrivial inclusion, let \(  z \in Z \) and \( g \in G \) be such that \( a(g,z) \in U \): we want to show that \( z \in U^\triangle \). Since the action is continuous, there is an open neighborhood \( V \subseteq G \) of \( g \) such that \( a(g',z) \in U \) for every \( g' \in V \). Since \( G \) is Baire, \( V \) is nonmeager, and thus it witnesses \( z \in U^\triangle \).
\end{proof}

\begin{lemma} \label{lemma:contradiction1}
Suppose that \( \E \embeds_{cB} E_a\) for some orbit equivalence relation \( E_a \) induced by a Borel action \(a\colon G\times Z \to Z\) of a Polish group \(G\) on a standard Borel space \(Z\). Then for club many \(\beta<\omega_1\), the set \(\widetilde H^\beta\) is \(\boldsymbol{\Pi}^0_{\beta+1}\).
\end{lemma}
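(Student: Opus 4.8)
The plan is to bound the Borel complexity of each class $\widetilde H^\beta$ by transporting it, via the given reduction, to an $E_a$-orbit and applying the structure theory of orbits of Polish group actions, following \cite[Lemma~3.1]{Bec}. First I would perform two harmless changes of topology. By \cite{BecKec} I refine the topology of $Z$ to a finer Polish one with the same Borel sets making $a$ continuous; this leaves $X$ untouched, hence does not affect the classes $\boldsymbol{\Pi}^0_{\beta+1}$ in the conclusion. Since $f\colon X\to Z$ is Borel, I then refine the topology of $X$ to a finer Polish topology $\tau$ (again with the same Borel sets) making $f$ continuous; if $\gamma_0<\omega_1$ bounds the Borel rank, in the original topology, of the countably many sets so adjoined, then $\boldsymbol{\Pi}^0_{\beta+1}(\tau)=\boldsymbol{\Pi}^0_{\beta+1}$ for all $\beta\ge\gamma_0$, so it suffices to prove the statement relative to $\tau$. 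Now fix, as witnesses of $\E\embeds_{cB}E_a$, a Borel $E_a$-invariant $A\subseteq Z$ and Borel reductions $f\colon X\to A$ and $g\colon A\to X$ of $\E$ into $E_a\restriction A$ and conversely, whose quotient maps $\hat f,\hat g$ are mutually inverse bijections. For $\beta<\omega_1$ put $O_\beta=[f(x)]_{E_a}$ for some (equivalently, any) $x\in\widetilde H^\beta$: since $f$ is a reduction and each $\widetilde H^\beta$ is a single $\E$-class, $O_\beta$ is a well-defined $E_a$-orbit, contained in $A$; one has $\widetilde H^\beta=f^{-1}(O_\beta)$, and $g$ maps $O_\beta$ back into $\widetilde H^\beta$. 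As $f$ is continuous, it is now enough to prove that $O_\beta$ is $\boldsymbol{\Pi}^0_{\beta+1}$ (in $Z$) for club many $\beta$.

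Next I set up the admissibility bookkeeping. Fix a real $r$ coding all the data (the action $a$, the two topologies, $A$, $f$ and $g$). Using Lemma~\ref{lem : formulas}\ref{lem : formulas-c} relativized to $r$, together with the closure arguments already used for the set $W$ in the proof of Lemma~\ref{lem:steel}, the set $C$ of $\beta<\omega_1$ that are $r$-admissible, satisfy $\beta\ge\gamma_0$, and are closed under the relevant rank functions contains a club. For $\beta\in C$ I would exhibit a ``low'' representative of $O_\beta$: since $\beta$ is $r$-admissible, the effectivity of Zippin's construction yields a code $x^{*}\in\widetilde H^\beta$ for some $\mathcal{M}^{(\beta,n)}$ lying in $L_\beta[r]$, so that $\omega_1^{\langle x^{*},r\rangle}=\beta$ (the inequality $\ge$ being immediate from Lemma~\ref{lem : formulas}\ref{lem : formulas-c}); then $z_\beta:=f(x^{*})\in O_\beta$ is recursive in $\langle x^{*},r\rangle$, whence $\omega_1^{\langle z_\beta,r\rangle}\le\beta$.

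Finally I would estimate $O_\beta=[z_\beta]_{E_a}$ via the Becker--Kechris Scott analysis of orbits of continuous Polish group actions \cite{BecKec}: $O_\beta$ is a countable intersection of sets built, along the countable local rank of $z_\beta$ with respect to the action, by iterating the saturation operation --- which by Lemma~\ref{lem:opentransformed} sends open sets to $E_a$-invariant sets of controlled complexity --- together with complementation and countable unions and intersections; a relativized Nadel-type bound (cf.\ \cite{Sam94}) keeps this local rank below $\beta$, once $\beta\in C$. The main obstacle is the ensuing level count. A mere Borel reduction of $\E$ into an orbit equivalence relation --- for instance the continuous reduct map into the isomorphism relation on countable abelian $p$-groups --- would by the above reasoning only place $\widetilde H^\beta$ in $\boldsymbol{\Pi}^0_{\beta+2}$, which is perfectly compatible with Lemma~\ref{lem:steel}; to shave off the extra level one must genuinely use that $g$ carries $O_\beta$ back into $\widetilde H^\beta$ and that $\hat f,\hat g$ are inverse bijections, so as to certify that the local rank of $O_\beta$ equals the ``correct'' value rather than one unit larger. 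Carrying out this bookkeeping exactly, as in \cite[Lemma~3.1]{Bec}, one obtains $O_\beta\in\boldsymbol{\Pi}^0_{\beta+1}$, and hence $\widetilde H^\beta=f^{-1}(O_\beta)\in\boldsymbol{\Pi}^0_{\beta+1}(\tau)=\boldsymbol{\Pi}^0_{\beta+1}$ for club many $\beta$, as required.
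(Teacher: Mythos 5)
Your proposal takes a genuinely different route from the paper's, and it has a gap at the decisive step --- one you yourself flag. You try to bound the Borel rank of the orbit \( O_\beta=[f(x)]_{E_a} \) (for \( x\in\widetilde H^\beta \)) directly, by producing a ``low'' representative \( z_\beta \) with \( \omega_1^{\langle z_\beta,r\rangle}\leq\beta \) and invoking the Becker--Kechris Scott analysis of orbits. But, as you note, the generic outcome of that analysis is \( \boldsymbol{\Pi}^0_{\beta+2} \), one level too coarse to contradict Lemma~\ref{lem:steel}. You then assert that using \( g \) and the bijectivity of \( \hat f,\hat g \) one can ``certify that the local rank of \( O_\beta \) equals the correct value,'' and defer the bookkeeping to Becker's notes. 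This is exactly the content to be proved and is not supplied: a low reflecting ordinal \( \omega_1^{\langle z_\beta,r\rangle}\leq\beta \) does not by itself force the orbit rank of \( z_\beta \) to fall \emph{strictly} below \( \beta \), which is what the direct approach would need to shave off the extra level.

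The paper earns the \( \boldsymbol{\Pi}^0_{\beta+1} \) bound quite differently, without computing the rank of any single orbit. It restricts attention to the \( \E \)-invariant \( \boldsymbol{\Pi}^0_\beta \) set \( Y \) of codes whose Ulm data matches that of \( \widetilde H^\beta \) below level \( \beta \); all points of \( Y \) are then pairwise \( \equiv^\beta \)-equivalent by Lemma~\ref{lem : Borel h}\ref{item : Borel h(d)}. Claim~\ref{claim4} shows --- via Sami's Transfer Lemma, the Vaught transform (Lemma~\ref{lem:opentransformed}), and the quantifier-rank bound from \cite[Theorem~16.8]{Kec} --- that \emph{at most one} \( \E \)-class inside \( Y \) can lie in \( \boldsymbol{\Pi}^0_{\beta+1} \), and that candidate is \( \widetilde H^\beta \). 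It then applies \cite[Corollary~5.1.10]{BecKec} to the \( E_a \)-invariant \( \boldsymbol{\Pi}^0_\beta \) set \( g^{-1}(Y) \), on which \( E_0 \) does not continuously embed because of Ulm classifiability, to produce \emph{some} \( E_a \)-orbit of complexity \( \boldsymbol{\Pi}^0_{\beta+1} \); its preimage under \( f \) is a \( \boldsymbol{\Pi}^0_{\beta+1} \) \( \E \)-class inside \( Y \) and, by the uniqueness of Claim~\ref{claim4}, must be \( \widetilde H^\beta \). This ``uniqueness plus existence'' strategy, using \( g \) only to transport the invariant set \( Y \) to the \( Z \) side where the existence theorem applies, is the idea missing from your sketch.
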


\begin{proof}
Recall that the domain \( X = \Mod(T) \) of \( \E \) is a Polish space when equipped with the topology \( \tau \) induced by \( X_{\mathcal{L}}\) (Lemma~\ref{lem : Borel h}\ref{item : Borel h(a)}).
Suppose that \(f\colon X \to Z\) witnesses that \( \E \sqsubseteq_{cB} F\) with classwise Borel inverse\footnote{In the sense of Definition~\ref{def:classwise}.} \(g\colon [f(X)]_{E_a} \to X\). 
Since \( [f(X)]_{E_a} \) is Borel and \( E_a \)-invariant, without loss of generality we may assume that \([f(X)]_{E_a} = Z\). By~\cite[Theorem~5.2.1]{BecKec}, the standard Borel space \(Z\) can be given a Polish topology \(t\) so that the action \(a\colon G\times Z\to Z\) is continuous.

Let \( \omega \leq \gamma < \omega_1 \) be such that both \(f \) and \( g\) are \(\boldsymbol{\Sigma}^0_\gamma\)-measurable. Then for every \(\delta \geq \gamma\omega\) and every \(A\in \boldsymbol{\Sigma}^0_\delta(Z,t)\), we have \(f^{-1}(A)\in \boldsymbol{\Sigma}^0_\delta(X,\tau)\). 
Indeed, an easy induction on \( \beta < \omega_1 \) shows that \(f^{-1}(B)\in \boldsymbol{\Sigma}^0_{\gamma+\beta}(X,\tau)\)
for all \(B \in \boldsymbol{\Sigma}^0_{1+\beta}(Z,t)\). In particular, \( f^{-1}(B) \in \boldsymbol{\Sigma}^0_{\gamma \cdot (n+1)}(X,\tau) \) for every \(B \in \boldsymbol{\Sigma}^0_{\gamma \cdot n} (Z,t)\) because \( \gamma \geq \omega \), and thus 
\(f^{-1}(A)\in \boldsymbol{\Sigma}^0_{\gamma \omega}(X,\tau)\) for all \(A\in \boldsymbol{\Sigma}^0_{\gamma\omega}(Z,t)\).
This settles the case \( \delta = \gamma \omega \). The general case \( \delta \geq \gamma \omega \) can then be proved by an easy induction. A similar argument shows that \( g^{-1}(A) \in \boldsymbol{\Sigma}^0_\delta(Z,t) \) for every \( \delta \geq \gamma \omega \) and \( A \in \boldsymbol{\Sigma}^0_\delta(X,\tau) \).

Fix \(r\in 2^\omega\) such that \(G\) is recursively-in-\(r\) presented and \(a\colon G\times Z\to Z\), \(f \colon X \to Z\), and \(g \colon Z \to X\) are all \(\Delta^1_1(r)\)-functions.

Let \( W \) be the collection of all 
\( \beta < \omega_1 \) such that 
\begin{itemizenew}
\item 
\( \beta > \omega_1^r \) and \( \beta \geq \gamma \omega\);
\item 
\( \omega \beta = \beta \) (in particular, \( \beta \) is limit); 
\item 
for every \( \alpha < \beta \) and \( n \in \omega+1\), the formulas \( \upphi_\alpha(v)\) and \(\uppsi_{\alpha, n}\) from Lemma~\ref{lem : formulas} define%
\footnote{Each \( \mathcal{L}'_{\omega_1 \omega} \)-formula \( \upvarphi(v_1, \dotsc, v_k) \) defines the set
\[ \left \{ (x, n_1, \dotsc, n_k) \in X_{\mathcal{L}'} \times \omega^k  \mid \mathcal{M}_x \models \upvarphi[n_1, \dotsc, n_k ]  \right\} .\]}
Borel subsets of rank less than \(\beta\) (with respect to \( \tau \)).
\end{itemizenew}

We are going to show that \(\widetilde{H}^\beta \in \boldsymbol{\Pi}^0_{\beta+1}(X,\tau) \) for every \(\beta\in W\): since \( W \) is a  club,
this concludes the proof of the lemma.

Fix any \(\beta \in W\). Let
\[
Y=\{
x\in X : l(H(\mathcal{M}_x)) \geq \beta\text{ and } \forall  \alpha<\beta\,  (U_\alpha(H(\mathcal{M}_x)) = \omega) \}.
\]
By Lemma~\ref{lem : Borel h}\ref{item : Borel h(d)},
for any two \(x,y\in Y\) we have \(\mathcal{M}_x\equiv^\beta \mathcal{M}_y\).
Moreover, \(Y\) is \( \E\)-invariant (hence also \(\cong \)-invariant), and \( Y \in \boldsymbol{\Pi}^0_\beta(X,\tau) \) 
by the last condition in the definition of \(W\). 
Notice also that the \( \E \)-equivalence class \( \widetilde H^\beta \) is contained in \( Y \).

Our argument splits in two parts. Following~\cite[Claim on p.\ 22]{Bec}, we first observe that 
every \( \E \)-equivalence class \(C\subseteq Y\) with \(C\neq \widetilde H^\beta\) is not in \(\boldsymbol{\Pi}^0_{\beta+1}(X,\tau)\). Then we show that there must be an \(\E\)-equivalence class contained in \( Y \) which belongs \(\boldsymbol{\Pi}^0_{\beta+1}(X,\tau)\): necessarily, such class is \( \widetilde H^\beta \) and we are done.

\begin{claim} \label{claim4}
Let \(C\subseteq Y\) be an \(E\)-equivalence class other than \(\widetilde H^\beta\). Then \(C \notin \boldsymbol{\Pi}^0_{\beta+1}(X,\tau)\).
\end{claim}

\begin{proof}[Proof of the Claim]
Recall that \( \mathcal{L} \) is obtained by adding to \( \mathcal{L}' = \{ + \} \) infinitely many constants \( a_n \). By Lemma~\ref{lem : formulas}\ref{lem : formulas-a}--\ref{lem : formulas-b}, for all \(x\in X\) we have that \(x\in \widetilde{\mathcal{M}}^{(\beta,0)}\) if and only if 
it belongs to the intersection of the following three sets:
\begin{align*}
& \bigcap_{\alpha < \beta} \left\{ x \in X \mid \mathcal{M}_x \models \exists v \, (\upphi_\alpha(v) \wedge \neg \upphi_{\alpha+1}(v)) \right \} ; \\
& \bigcap_{i \in \omega} \, \bigcup_{j,k \in \omega} \, \bigcup_{\alpha < \beta }\, \left\{ x \in X \mid \mathcal{M}_x \models i = j+k \wedge \upphi_\alpha[j] \wedge \neg \upphi_{\alpha+1}[j] \wedge \bigvee_{n \in \omega} k = a_n \right\} ; \\
& \bigcap_{\alpha < \beta } \left\{ x \in X \mid \mathcal{M}_x \models \uppsi_{\alpha,\omega} \right \}.
\end{align*}
By the third condition in the definition of \( W \) and the fact that \( \beta \) is limit, the first and last sets are in \( \boldsymbol{\Pi}^0_\beta(X,\tau) \), while the middle one belongs to \( \boldsymbol{\Pi}^0_{\beta+1}(X,\tau) \). Therefore \(\widetilde{\mathcal{M}}^{(\beta,0)} \), which is a subset of \( \widetilde H^\beta \), belongs to \(\boldsymbol{\Pi}^0_{\beta+1}(X,\tau)\).

Towards a contradiction, suppose that \(C\subseteq Y\) is an \( \E\)-equivalence class such that \( C \neq \widetilde H^\beta \) and \( C \in \boldsymbol{\Pi}^0_{\beta+1}(X,\tau) \).
In particular, \( C \) is disjoint from \( \widetilde{\mathcal{M}}^{(\beta,0)} \).
By Sami's Transfer Lemma~\cite[Section 4.3]{Sam94}  there is a Polish topology \( \tau'\) on \(X\) such that:
\begin{itemizenew}
\item 
the logic action \(j_\mathcal{L}\) is still continuous if \( X \) is equipped with \( \tau' \);
\item 
\( \tau'\) has a countable basis consisting of sets in \( \bigcup_{1 \leq \alpha < \beta} \boldsymbol{\Pi}^0_\alpha(X,\tau)\);
\item
both \( C\) and \(\widetilde {\mathcal{M}}^{(\beta,0)}\) are \(G_\delta\) with respect to \( \tau'\).
\end{itemizenew}

We claim that there is a nonempty \( \tau' \)-basic open set \( D_0\) such that either
\begin{align*}
 D_0 \cap  C\neq \emptyset  \quad &\text{ and } \quad D_0 \cap \widetilde{\mathcal{M}}^{(\beta,0)}=\emptyset, \qquad \text{or else}\\
D_0 \cap  C =\emptyset  \quad &\text{ and } \quad D_0 \cap \widetilde{\mathcal{M}}^{(\beta,0)}\neq\emptyset.
\end{align*}
To see this,
let \( X' \subseteq X \) be the \( \tau' \)-closure of \( C \cup \widetilde{\mathcal{M}}^{(\beta,0)} \). If all \( \tau' \)-basic open sets \( D \) with \( D \cap  X' \neq \emptyset \) meet both \(C\) and \( \widetilde{\mathcal{M}}^{(\beta,0)}\), then such sets would be dense in \(  X' \), and hence comeager because they are \( G_\delta \) with respect to \( \tau' \): since \( C \cap \widetilde{\mathcal{M}}^{(\beta,0)} = \emptyset \), this contradicts that fact that \( X' \), being Polish, is a Baire space.

Consider the Vaught's transform \( D_0^\triangle \) of \( D_0 \) with respect to the logic action \( j_{\mathcal{L}} \), which is a \( \cong \)-invariant set. On the one hand, working in the space \( (X,\tau') \) and applying Lemma~\ref{lem:opentransformed} we get that \( D_0^\triangle = [D_0]_{\cong} \) because \( D_0 \) is \( \tau' \)-open. 
Since both \( C \) and \( \widetilde{\mathcal{M}}^{(\beta,0)}\) are \( \cong \)-invariant, then \( D_0 \cap C = \emptyset \iff [D_0]_{\cong} \cap C = \emptyset \) and \( D_0 \cap \widetilde{\mathcal{M}}^{(\beta,0)} = \emptyset \iff [D_0]_{\cong} \cap \widetilde{\mathcal{M}}^{(\beta,0)} = \emptyset \), and hence in all cases there are \( x,y \in Y \) such that \( x \in D_0^\triangle \) and \( y \notin D_0^\triangle \). On the other hand, the Borel rank of \( D_0^\triangle \) with respect to the original topology \( \tau \) is some ordinal \( \alpha < \beta  \) because this is true of \( D_0 \) (see \cite[Lemma~5.1.7(g)]{BecKec}).
Therefore, \( D_0^\triangle \) can be defined by an \(\mathcal{L}_{\omega_1\omega}\)-sentence \( \uptheta \) of quantifier rank at most \(\omega\alpha\) by~\cite[Theorem 16.8]{Kec}. 
 (Indeed, following the proof of~\cite[Proposition 16.9]{Kec} one sees that when the Borel rank of the \( \cong \)-invariant set at hand is increased by \( 1 \), then the \( \mathcal{L}_{\omega_1 \omega} \)-formula defining it needs only finitely many additional quantifiers with respect to the ones used in the previous levels, and no new quantifier is needed when dealing with limit levels. An easy induction on \( \alpha \) gives then the desired result.) 
 As \( x \in D_0^\triangle \) and \( y \notin D_0^\triangle \), \( \mathcal{M}_x \models \uptheta \) while \( \mathcal{M}_y \not\models \uptheta \). Since \( \omega \alpha < \omega \beta = \beta \),
 it follows that \( \mathcal{M}_x \not\equiv^\beta \mathcal{M}_y \), a contradiction with the fact that \( x,y \in Y \) entails \( \mathcal{M}_x \equiv^\beta \mathcal{M}_y \). 
 \end{proof}

To conclude the proof, we just need
to show that there is at least an \( \E \)-equivalence class \(C\subseteq Y\), which by Claim~\ref{claim4} is necessarily \( \widetilde H^\beta \),  that belongs to \(\boldsymbol{\Pi}^0_{\beta+1}(X,\tau)\).
Since \(Y \in \boldsymbol{\Pi}^0_\beta(X,\tau)\), then \(Y' = g^{-1}(Y) \in \boldsymbol{\Pi}^0_\beta(Z,t) \subseteq \boldsymbol{\Pi}^0_{\beta+1}(Z,t) \) because \(\beta \geq \gamma\omega\), 
and \( Y' \) is clearly \( E_a \)-invariant. Recall also that \( \beta \in W \) is a countable limit ordinal. Notice that \( E_a \restriction Y' \) is classifiable by Ulm invariants because \( E_a \leq_B \E \) via \( g\), hence \(E_0 \not\sqsubseteq_c E_a\restriction Y'\)
by~\cite[Theorem~3.4.4]{BecKec}. 
By~\cite[Corollary~5.1.10]{BecKec},
there is an \(E_a\)-equivalence class \(C'\subseteq Y'\) such that \(C' \in \boldsymbol{\Pi}^0_{\beta+1}(Z,t)\), and since \(\beta \geq \gamma\omega\), the \(\E\)-equivalence class \(C = f^{-1}(C') \subseteq Y \) belongs to \(\boldsymbol{\Pi}^0_{\beta+1}(X,\tau)\).
\end{proof}

\section{Some questions} \label{sec:questions}

Despite being discussed since the early nineties, the notion of idealistic is one of the most mysterious and intriguing notions in invariant descriptive set theory. 
Besides Question~\ref{question:basic}, which is still open as stated, and in particular in the Borel case, there are many natural and fundamental questions that still need to be answered. In the list below we collect some of the questions that are tightly related to the content of this manuscript, but we also include other more general problems about idealistic equivalence relations.

\begin{question}
\label{q:no determinacy}
Can the \( \boldsymbol{\Sigma}^1_1\)-determinacy assumption be removed from Theorem~\ref{thm : main}?
\end{question}

The equivalence relation \(\E\) is in the G\"odel's constructible universe \(L\), and the definition of \( \mathcal{I} \) is \(\Pi^1_3\). Therefore, under analytic determinacy Theorem~\ref{thm : main} is true in \(L\) and this might suggest that no assumption beyond \( \mathsf{ZF} \) is necessary.

\begin{question}
\label{q:Borel in I}
Does \( \mathcal{I} \) contain Borel equivalence relations?
\end{question}
We remark that the equivalence relation \(\E\) is not Borel, otherwise we would have \( E_0 \leq \E \) by the Borel Glimm-Effros dichotomy; indeed, Question~\ref{q:Borel in I} is open even under \(\boldsymbol{\Sigma}^1_1\)-determinacy.

As already observed in Section~\ref{sec:Becker},
\(\E\) is Borel reducible to an orbit equivalence relation, and precisely to \(\cong_\mathcal{L}\). However, the following problem remains open.

\begin{question}
Is 
\(\E\) Borel bireducible with an orbit equivalence relation? 
\end{question}

Notice that, by construction, \(\E\) is Borel bireducible with the restriction of an orbit equivalence relation to an invariant analytic set.

A major problem in the area
is to understand whether we can have any sort of dichotomy theorem concerning idealistic equivalence relations.
For example, since Hjorth's \( \EH \) is Borel reducible to a countable Borel (hence orbit) equivalence relation, the following conjecture remains unaffected by the results of this paper.

\begin{conjecture} \label{conj3intro} 
Let \( E \) be a Borel equivalence relation. Then either \( E_1 \leq_B E \) or \( E \) is Borel reducible to an orbit (or even just idealistic) equivalence relation.
\end{conjecture}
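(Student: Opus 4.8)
This final statement is a long-standing open conjecture, so what follows is an attack strategy rather than a proof. The natural framework is a Glimm--Effros--style dichotomy carried out with effective descriptive set theory: fix a $\Delta^1_1$ copy of $E$ on $2^\omega$, work with the Gandy--Harrington topology on the domain, and try to show that exactly one of two mutually exclusive alternatives holds --- on the positive side one manufactures the structure demanded by the conclusion, while on the negative side the failure of that construction is packaged into a tree of finite approximations from which a continuous reduction witnessing $E_1\leq_B E$ is read off.

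First I would target the weaker ``idealistic'' form of the conclusion, since getting the full ``orbit'' version out of it is precisely the Borel case of Question~\ref{question:basic}, itself open. Aiming only at $E\leq_B F$ for \emph{some} idealistic $F$ --- as opposed to $F\sim_B E$, which is impossible in general by Proposition~\ref{thm:intro3} --- buys substantial room: rather than equipping $E$ itself with idealistic data, it is enough to Borel-reduce $E$ into a well-understood idealistic relation, and the cleanest candidates are orbit relations of concrete Polish group actions or isomorphism relations of countable structures. A cheap way to win outright would be to build a Borel assignment $x\mapsto\tau_x$ of a Polish topology on each class $[x]_E$, depending only on the class, whose associated meager-ideal assignment $C\mapsto I_C$ satisfies the Borelness requirement in~\eqref{eq:idealistic}; such an assignment already witnesses that $E$ is idealistic, in the spirit of Example~\ref{xmp:idealistic}\ref{xmp:idealistic-a}. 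The hypothesis $E_1\nleq_B E$ must be what makes this feasible: the content of Theorem~\ref{thm : idealistic obstruction} is that the $E_1$-pattern --- countably many mutually independent, individually reversible ``tail'' perturbations with no finite bound --- is exactly the configuration blocking a Borel-uniform choice of ccc ideals on the classes.

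When the construction above breaks down, one expects it to break down for a reason: at some countable stage, inside a Gandy--Harrington-generic class, one should locate an infinite array of perturbations that are pairwise independent and each individually invertible, and these assemble into a continuous reduction of $E_1$ into $E$. This is the part I expect to be genuinely hard, and it is where previous attempts have stalled. The difficulty is twofold. First, the positive alternative (``there is a Borel idealistic $F$ with $E\leq_B F$'') is a $\Sigma^1_2$-type statement, and such statements are not obviously absolute enough to be refuted by the forcing/genericity arguments behind the Glimm--Effros dichotomy, Silver's dichotomy, and the later Hjorth--Kechris dichotomies; one would need a genuinely ``internal'', combinatorial characterization of reducibility-into-an-idealistic-relation to run the argument. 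Second, there is no precedent for extracting idealistic structure from a negative reducibility hypothesis: every known family of idealistic relations --- orbit relations, the measure-class relations of Example~\ref{xmp:idealistic}\ref{xmp:idealistic-b}, Becker's $\E$ via Proposition~\ref{prop:select} --- comes with a bespoke construction, never from ``$E_1\nleq_B E$'' alone.

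Given these obstacles, the realistic plan is incremental. One should first establish the conjecture on subclasses where the structure is already visible --- equivalence relations induced by Borel actions of abelian (or, more ambitiously, amenable) Polish groups, extending Solecki's analysis of coset relations; relations low in the hypersmooth hierarchy, where everything is outright $\leq_B E_0$ or $\leq_B E_1$; and relations classifiable by countable structures, via Corollary~\ref{cor:intro2} together with the reductions furnished by H.\ Friedman's theorem. Even a uniform treatment of these cases would, at a minimum, isolate the precise combinatorial invariant separating ``$E$ is Borel reducible to an idealistic relation'' from ``$E_1\leq_B E$'' --- the indispensable ingredient for any dichotomy proof in the general Borel case.
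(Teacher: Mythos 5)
This statement is an open conjecture, not a theorem of the paper: the paper records it precisely because it is one of the central unresolved problems about idealistic equivalence relations, and the surrounding discussion only cites partial progress (Solecki's verification for quotients $2^\omega/\mathcal I$ by Borel ideals, from \cite{Sol}). You correctly recognize this and offer an attack plan rather than a proof, which is the honest thing to do; there is therefore nothing in the paper to compare your argument against, and no proof to verify.

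As an attack strategy, what you sketch is reasonable and identifies the genuine obstruction --- that the positive alternative ``$E$ is Borel reducible to some Borel idealistic relation'' sits high in the projective hierarchy and is not obviously absolute, so it resists the Gandy--Harrington/forcing machinery behind the known dichotomies; and that there is no known internal, combinatorial reformulation of ``reducible to an idealistic relation'' to feed into such a dichotomy argument. Two small corrections. First, you propose the subclass ``classifiable by countable structures'' as a test case via Corollary~\ref{cor:intro2}, but for such $E$ the conjecture is trivially true, since classifiability by countable structures \emph{means} $E$ is Borel reducible to an $S_\infty$-orbit equivalence relation; Corollary~\ref{cor:intro2} addresses a stronger conclusion (classwise Borel isomorphism) under the additional hypothesis that $E$ is itself idealistic, which is not what the conjecture asks for. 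Second, you fold together two distinct Solecki results: \cite{Sol09} (coset relations of Abelian Borel subgroups) verifies the Kechris--Louveau dichotomy of Problem~\ref{conj1intro} in that restricted setting, while \cite{Sol} (Borel ideals on $\omega$) is what verifies the present Conjecture~\ref{conj3intro} for the corresponding quotients. Keeping those separate matters, because the two conjectures have different positive alternatives and Proposition~\ref{thm:intro3} refutes one but not the other.
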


Conjecture~\ref{conj3intro} is one of the most important and longstanding structural problems in the area, and it deserves full attention from the descriptive set-theoretic community. It appears in~\cite[Question 10.9]{Hjo}
and is discussed in~\cite{Kec2009} (see the diagram on p.\ 101 and the explanations following it).
Solving in the positive another problem from~\cite{KecLou97},
Solecki has verified Conjecture~\ref{conj3intro} in the special case of equivalence relations of the form \( 2^\omega / \mathcal{I} \) for \( \mathcal{I} \) an ideal on \( \omega \) which is Borel as a subset of \( 2^\omega  \) (see~\cite{Sol}).
However, the general case remains open.

To the best of our knowledge, the following questions are also open.

\begin{questions} \label{q:final}
\begin{enumerate-(1)}
\item
Is there a complete idealistic equivalence relation?
\item
Are idealistic equivalence relations closed under the Friedman-Stanley jump operator?
\item \label{q:weak}
Is every idealistic (Borel) equivalence relation Borel reducible to a orbit equivalence relation? Can such orbit equivalence relation be taken to be Borel is so is the idealistic equivalence relation we start with?
\item \label{q:omega1}
Is there any idealistic equivalence relation with exactly \(\omega_1\)-many equivalence classes? 
\item
Does the fact that \( E \) is idealistic imply that every \( E \)-equivalence class is Borel?
\item\label{q:E_1 not reducible}
Is it true that \(E_1\not \leq_B E\) for any idealistic (not necessarily Borel) equivalence relation \(E\)?
\end{enumerate-(1)}
\end{questions}

Question~\ref{q:final}\ref{q:weak} is a weakening of Question~\ref{question:basic} and is clearly strictly related to Proposition~\ref{prop:intro2}. Indeed, if the answer to Question~\ref{q:final}\ref{q:weak} is positive, then also Question~\ref{question:basic} could be answered positively for Borel equivalence relations, provided that one can extend H.\ Friedman's result~\cite{HFri} from actions of \( S_\infty \) to actions of arbitrary Polish groups (see~\cite[Question 10.8]{Hjo}).
A negative answer to Question~\ref{q:final}\ref{q:omega1} would imply the longstanding topological Vaught conjecture, so it is very challenging.
Question~\ref{q:final}\ref{q:E_1 not reducible} has been answered by Kechris and Louveau in the special cases when \( E \) is either idealistic Borel, or an orbit equivalence relation (\cite[Theorem~4.1, and Theorem~4.2]{KecLou97}).

\appendix
\section{Steel's forcing} \label{appendix}

We give an almost self-contained presentation of Steel's forcing with tagged trees, that is, trees equipped with a function assigning to each of their nodes a tag in \( \omega_1 \cup \{ \infty \} \). The main goal is to prove Proposition~\ref{prop : complexity separating set}, that is used in the proof of Theorem~\ref{thm : main}.

Given an ordinal \(\alpha\), we denote by \(\mathbb{P}_\alpha\)  the \textbf{Steel's forcing poset} of level \(\alpha\).
A condition in \(\mathbb{P}_\alpha\) is a tagged tree \(\langle T, h\rangle\) such that:
\begin{enumerate-(1)}
\item
\(T\) is a finite tree on \(\omega\), i.e., a finite subset of \(\omega^{<\omega}\) such that if \(\tau \in T\) and \(\sigma \subseteq \tau \) then \(\sigma \in T\).
\item 
\(h\colon T \to\omega\alpha \cup \{\infty\}\) is a function such that
\begin{enumerate-(i)}
\item 
\(h(\emptyset) = \infty\);
\item
\(h(\tau)<h(\sigma)\) for all \(\sigma\subset\tau\) in \(T\), where \( < \) is the usual strict order on ordinals extended to \( \infty \) by stipulating that \(\beta<\infty\) for all \(\beta \in \omega \alpha \).
\end{enumerate-(i)}
\end{enumerate-(1)}

If \(p=\langle S, h_S \rangle\) and \(q=\langle T, h_T \rangle\) are two conditions in \(\mathbb{P}_\alpha\), we define
\[p\leq_\alpha q \quad \iff\quad S \supseteq T 
\text{ and }h_S\restriction T =h_T.\]

Steel's forcing only deals with a particular kind of infinitary formulas, called here tree-formulas, rather than arbitrary first-order formulas in the language of set theory. 
\begin{definition}
\textbf{Tree-formulas} are defined recursively as follows:
\begin{enumerate-(i)} 
\item 
\textbf{Atomic} tree-formulas are those of the form \( \sigma \in X \), for some \( \sigma \in \omega^{< \omega} \).
\item 
If \(\upvarphi\) is a tree-formula, then \(\neg\upvarphi\) is a tree-formula too.
\item 
If \((\upvarphi_n)_{n \in \omega}\) is a countable sequence of tree-formulas, then \(\bigwedge_{n \in \omega}\upvarphi_n\) is a tree-formula.
\end{enumerate-(i)}
\end{definition}

Next, we introduce a specific notion of rank for tree-formulas.
\begin{definition}
The \textbf{rank} \( \rnk(\upvarphi) \) of a tree-formula \( \upvarphi \) is defined by recursion on the complexity of \( \varphi \) as follows:
\begin{enumerate-(i)}
\item 
If \( \upvarphi \) is atomic, then \(\rnk(\upvarphi) = 1\).
\item 
If \( \upvarphi \) is of the form \( \neg \uppsi \), then \(\rnk (\upvarphi) = \rnk(\uppsi)+1\).

\item 
If \( \upvarphi \) is of the form \( \bigwedge_{n \in \omega} \upvarphi_n \), then \(\rnk (\upvarphi) = \sup\{\rnk(\upvarphi_n)+1 : n \in \omega\} \).

\end{enumerate-(i)}
\end{definition}

Still working by recursion on its complexity, to each tree-formula \( \varphi \) we can also canonically associate a set \( Q_\upvarphi \subseteq 2^{(\omega^{< \omega})}\).

\begin{definition} \label{def:Q_phi}
\begin{enumerate-(i)}
\item 
If \( \upvarphi \) is an atomic formula of the form \( \sigma \in X \), then \( Q_\upvarphi = \{ T \in 2^{(\omega^{< \omega})} \mid \sigma \in T \} \).
\item 
If \( \upvarphi \) is of the form \( \neg \uppsi \), then \( Q_\upvarphi = 2^{(\omega^{< \omega})} \setminus Q_\uppsi \).
\item 
If \( \upvarphi\) is of the form \( \bigwedge_{n \in \omega} \upvarphi_n \), then \( Q_\upvarphi = \bigcap_{n \in \omega} Q_{\upvarphi_n} \).
\end{enumerate-(i)}
\end{definition}

Given a set \( Q \subseteq 2^{(\omega^{< \omega})}\), we say that that a tree-formula \( \upvarphi \) \textbf{represents} \( Q \)  if \( Q = Q_\upvarphi \).

\begin{lemma} \label{lem:Q_phi}
For every tree formula \( \upvarphi \), the set \( Q_\upvarphi \) is Borel.
Conversely, if \( 1 \leq \xi < \omega_1 \) and  \( Q \subseteq 2^{(\omega^{< \omega})} \) is \( \boldsymbol{\Pi}^0_\xi \) (respectively: \( \boldsymbol{\Sigma}^0_\xi \)), then \( Q \) is represented by a tree-formula \( \upvarphi \) with rank \( 3+2 \xi\) (respectively: \( 3 + 2 \xi + 1 \)).  
\end{lemma}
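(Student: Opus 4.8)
The plan is to prove both halves by induction, mirroring the recursive definitions of $Q_\upvarphi$ and $\rnk$. For the first assertion, I would argue by induction on the complexity of the tree-formula $\upvarphi$ that $Q_\upvarphi$ is Borel. The atomic case is immediate: $\{T \in 2^{(\omega^{<\omega})} \mid \sigma \in T\}$ is clopen, being determined by the single coordinate $\sigma$. The negation case is closed under complementation, and the countable conjunction case under countable intersection, so Borelness propagates; this direction is entirely routine.

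The substantive direction is the converse, and I would prove it by induction on $\xi$ with $1 \le \xi < \omega_1$, proving simultaneously the two bounds (the $\boldsymbol{\Pi}^0_\xi$-bound $3+2\xi$ and the $\boldsymbol{\Sigma}^0_\xi$-bound $3+2\xi+1$). The key bookkeeping point is that negation raises rank by exactly $1$, so passing from a $\boldsymbol{\Pi}^0_\xi$ set to its $\boldsymbol{\Sigma}^0_\xi$ complement (or vice versa) costs one unit of rank, which accounts for the ``$+1$'' discrepancy between the two bounds. For the base case $\xi = 1$: a basic clopen set in $2^{(\omega^{<\omega})}$ is a finite Boolean combination of the atomic sets $\{\sigma \in X\}$ and their complements, hence represented by a tree-formula of rank at most $3$; a general open set is a countable union of basic clopen sets, so its complement (a closed, i.e.\ $\boldsymbol{\Pi}^0_1$, set) is a countable intersection of sets of rank $\le 3$, giving rank $\sup\{3+1\} = 4 = 3 + 2\cdot 1 - 1$. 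Hmm — I would need to be slightly careful with the exact arithmetic at the base case and possibly absorb small constants; the cleanest route is to note that a $\boldsymbol{\Pi}^0_1$ set is $\bigcap_n C_n$ with each $C_n$ clopen, each $C_n = Q_{\upvarphi_n}$ with $\rnk(\upvarphi_n) \le 4 \le 3 + 2\xi$ for $\xi \ge 1$ after possibly padding with a dummy negation, so $\rnk(\bigwedge_n \upvarphi_n) \le 3 + 2\xi$, hence a $\boldsymbol{\Sigma}^0_1$ set has rank $\le 3+2\xi+1$ by one negation. The inductive step at $\xi$: a $\boldsymbol{\Sigma}^0_\xi$ set is $\bigcup_n A_n$ with each $A_n \in \boldsymbol{\Pi}^0_{\xi_n}$, $\xi_n < \xi$; by induction each $A_n = Q_{\uppsi_n}$ with $\rnk(\uppsi_n) \le 3 + 2\xi_n < 3 + 2\xi$, so $\neg A_n$ has rank $\le 3 + 2\xi_n + 1 \le 3+2\xi$ (using $\xi_n + 1 \le \xi$), and $\bigcup_n A_n = \neg \bigwedge_n \neg A_n = Q_\upvarphi$ for $\upvarphi = \neg \bigwedge_n \neg \uppsi_n$, whose rank is $\sup\{3+2\xi_n+1+1\}+1 \le 3 + 2\xi + 1$; one more negation yields the $\boldsymbol{\Pi}^0_\xi$ bound $3+2\xi$. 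The limit-$\xi$ case works identically since $\xi_n + 1 \le \xi$ still holds when $\xi$ is a limit and $\xi_n < \xi$.

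The main obstacle I anticipate is purely arithmetical: verifying that the ordinal bounds $3+2\xi$ and $3+2\xi+1$ survive the suprema-plus-one operations in the countable-conjunction clause without drifting, particularly ensuring that the ``$+1$''s introduced by the De Morgan rewriting of unions as negated conjunctions of negations do not accumulate beyond the stated bound. The crucial inequality making this work is $2(\xi_n+1) \le 2\xi$ whenever $\xi_n < \xi$ (true for all ordinals, limit or successor), together with the fact that $\sup_n(\alpha_n + 1) \le \alpha$ when each $\alpha_n < \alpha$ — one must check this even at limits, where $\sup_n(\alpha_n+1)$ could equal $\alpha$ but not exceed it, and then the outermost ``$+1$'' from the final negation gives exactly $3+2\xi+1$ or, after padding, $3+2\xi$. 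Once the induction hypothesis is set up to carry both bounds in tandem, every step reduces to these elementary ordinal inequalities.
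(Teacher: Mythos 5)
Your overall strategy mirrors the paper's: the first direction by routine induction on formula complexity, and the converse by induction on \( \xi \) carrying both the \( \boldsymbol{\Pi} \)- and \( \boldsymbol{\Sigma} \)-bounds, with the ordinal inequality \( 2(\xi_n + 1) \le 2\xi \) doing the work of keeping the suprema in check. Your ``cleanest route'' at \( \xi = 1 \) lands on the same count as the paper's (subbasic rank \( 1 \) or \( 2 \), basic opens rank \( 3 \), their complements rank \( 4 \), closed sets rank \( 5 = 3 + 2 \)), and your earlier attempt giving \( 4 \) is indeed off because the complement of a basic open is represented by a rank-\(4\), not rank-\(3\), formula.

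The genuine misstep is the sentence ``one more negation yields the \( \boldsymbol{\Pi}^0_\xi \) bound \( 3 + 2\xi \).'' Negation always \emph{raises} rank by one: negating your \( \boldsymbol{\Sigma}^0_\xi \) formula \( \upvarphi = \neg \bigwedge_n \neg \uppsi_n \), of rank \( \le 3 + 2\xi + 1 \), yields rank \( \le 3 + 2\xi + 2 \), overshooting the claimed \( \boldsymbol{\Pi} \)-bound; one cannot reach the smaller bound from the larger by adding a negation. The correct observation is already contained in your own computation: the subformula \( \bigwedge_n \neg \uppsi_n \) --- obtained by \emph{dropping} the outer negation, not adding one --- represents the complement \( \bigcap_n \bigl(2^{(\omega^{<\omega})} \setminus A_n\bigr) \), which is a general \( \boldsymbol{\Pi}^0_\xi \) set, and you have already shown its rank to be \( \sup\{3 + 2(\xi_n+1)\} \le 3 + 2\xi \). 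The paper avoids the issue by proving \( \boldsymbol{\Pi}^0_\xi \) first (write it as \( \bigcap_n B_n \) with \( B_n \in \boldsymbol{\Sigma}^0_{\xi_n} \), apply the \( \boldsymbol{\Sigma} \)-inductive hypothesis to each \( B_n \), take the conjunction) and then getting \( \boldsymbol{\Sigma}^0_\xi \) by a single negation. Either order is fine, but the rank bookkeeping must respect that negation only ever increases rank.
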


In particular, if \( \xi = n \in \omega\), then \( 3 + 2\xi = 2n+3 \), while if \( \xi = \lambda + n \) for some limit \( \lambda \) and \( n \in \omega \), then \( 3 + 2 \xi = \lambda + 2n \).

\begin{proof}
If \( \upvarphi \) is atomic, then \( Q_\upvarphi \) is in the canonical subbase for the product topology on \( 2^{(\omega^{< \omega})}\). 
Thus, by induction one sees that \( Q_\upvarphi \) is always in the smallest \( \sigma \)-algebra generated by such sets, and it is thus Borel.

For the second part, we argue by induction on \( 1 \leq \xi < \omega_1 \). 
Since sets in \( \boldsymbol{\Sigma}^0_\xi \) are complements of sets in \( \boldsymbol{\Pi}^0_\xi \), we just need to use one more negation to represent them. Thus it is enough to consider the case of \( \boldsymbol{\Pi}^0_\xi \).
For the basic case \( \xi = 1 \), first observe that the sets in the canonical subbase for the product topology are either of the form \( \{ T \in 2^{(\omega^{<\omega})} \mid T(\sigma) = 1 \} \) or \( \{ T \in 2^{(\omega^{<\omega})} \mid T(\sigma) = 0 \} \), for \( \sigma \in \omega^{< \omega} \). The former is represented by \( \sigma \in X \), which has rank \( 1 \), the latter is represented by \( \neg (\sigma \in X ) \), which has rank \( 2 \). Thus the basic open sets are represented by a conjuction of tree-formulas as above, which has thus rank \( 3 \). This implies that closed sets, being intersections of complements of basic open sets, are represented by tree-formulas of rank \( 5 \).

Suppose now that \( \xi = \zeta+1\) is a successor ordinal. By inductive hypothesis, sets in \( \boldsymbol{\Sigma}^0_\zeta \) are represented by tree-formulas of rank \( 3 + 2\zeta + 1 \). It follows that every set in \( \boldsymbol{\Pi}^0_{\xi} \), being an intersection of \( \boldsymbol{\Sigma}^0_\zeta \)-sets, is represented by a tree-formula of rank 
\[ 
3 + 2\zeta + 1 + 1 = 3 + 2(\zeta+1) = 3 + 2 \xi.
\]

Finally, suppose that \( \xi \) is limit, and let \( (\xi_n)_{n \in \omega} \) be a strictly increasing sequence of ordinals cofinal in \( \xi \). Pick any \( A \in \boldsymbol{\Pi}^0_\xi \), and let \( A_n \in \boldsymbol{\Sigma}^0_{\xi_n} \) be such that \( A = \bigcap_{n \in \omega } A_n \). Then by inductive hypothesis each \( A_n \) is represented by a tree-formula \( \upvarphi_n \) with \( \rnk(\upvarphi_n) = 3 + 2 \xi_n + 1 \). Therefore \( A \) is represented by the tree-formula \( \bigwedge_{n \in \omega} \upvarphi_n \), which has rank
\[  
\sup \{ 3 + 2 \xi_n + 1 + 1 : n \in \omega \} = \sup \{ 3 + 2 (\xi_n+1) : n \in \omega \} = 3 + 2 \xi. \qedhere
\]
\end{proof}

\begin{definition}
Given an ordinal \( \alpha \), we define the Steel's forcing relation \( \Vdash_\alpha \) as follows, where \( p = (T,h) \) is any condition in \( \mathbb{P}_\alpha \) and \( \upvarphi \) is a tree-formula:
\begin{enumerate-(i)}
\item if \( \upvarphi \) is an atomic tree-formula of the form \( \sigma \in X \), then \(p\Vdash_\alpha \upvarphi \) if and only if either \(\sigma \in T\) or \(\sigma\) is an immediate successor of some \(\tau \in T\) with \(h(\tau)\neq 0\);
\item
\(p\Vdash_\alpha \neg \upvarphi\) if and only if  for all \(q\leq_\alpha p\) we have \(q\not\Vdash_\alpha\upvarphi\);
\item 
\(p\Vdash_\alpha \bigwedge_{n \in \omega} \upvarphi_n\) if and only if \(p\Vdash_\alpha \upvarphi_n\) for all \(n \in \omega\).
\end{enumerate-(i)}
\end{definition}

A useful feature of Steel's forcing is the so called Retagging Lemma. By definition, if \(\alpha\leq\beta \) then \(\mathbb{P}_\alpha \subseteq \mathbb{P}_\beta\). In this case, there is a function 
\[
\mathbb{P}_\beta\to \mathbb{P}_\alpha, \quad p\mapsto p^\alpha, 
\]
called \textbf{retagging function}, which respects the partial orderings and is the identity function on \(\mathbb{P}_\alpha\):
for \(p=\langle T,h\rangle\in \mathbb{P}_\beta\), the retagging of \(p\) in \(\mathbb{P}_\alpha\) is the condition \(p^\alpha=\langle T,h'\rangle\) with 

\[
h'(\sigma)=
\begin{cases}
    h(\sigma) & h(\sigma)<\omega\alpha\\
    \infty & \text{otherwise}.
\end{cases}
\]
It is easy to check that: 
\begin{itemizenew}
\item 
\(p^\alpha = p\) for all \(p\in \mathbb{P}_\alpha\);
\item 
for any \(\alpha\leq\beta \), the retagging function from \(\mathbb{P}_\beta \) to \( \mathbb{P}_\alpha \) is monotone, that is, \(p\leq_\beta q \implies p^\alpha\leq_\alpha q^\alpha\) for every \( p,q \in \mathbb{P}_\beta \).
\end{itemizenew}

\begin{theorem}[Retagging Lemma] \label{thm:retagging}
If \(\beta <\gamma < \omega_1\) and \(\upvarphi\) is a tree-formula with \(\rnk(\upvarphi) < \beta\), then for every \( p \in \mathbb{P}_\gamma \) we have
\(
p \Vdash_\gamma \upvarphi \iff p^\beta \Vdash_\beta \upvarphi.
\)
\end{theorem}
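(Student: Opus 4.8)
The plan is to prove the Retagging Lemma by induction on the rank of the tree-formula $\upvarphi$, simultaneously for all conditions $p \in \mathbb{P}_\gamma$ and all pairs $\beta < \gamma < \omega_1$ with $\rnk(\upvarphi) < \beta$. The statement to be inducted on is the biconditional $p \Vdash_\gamma \upvarphi \iff p^\beta \Vdash_\beta \upvarphi$; note that the retagging $p \mapsto p^\beta$ is well-defined since $\rnk(\upvarphi) < \beta$ guarantees we are in the regime where the Steel forcing $\mathbb{P}_\beta$ can "see" enough. First I would handle the atomic case: if $\upvarphi$ is $\sigma \in X$, then $\rnk(\upvarphi) = 1 < \beta$, and I would observe that $p \Vdash_\gamma \sigma \in X$ depends only on the underlying tree $T$ and on which nodes $\tau \in T$ have $h(\tau) \neq 0$. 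The key point is that retagging to level $\beta$ only changes tags that are $\geq \omega\beta$ (sending them to $\infty$), so in particular it never changes whether a tag equals $0$; hence the atomic forcing relation is preserved verbatim.

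The conjunction case is immediate from the definition: $p \Vdash_\gamma \bigwedge_n \upvarphi_n$ iff $p \Vdash_\gamma \upvarphi_n$ for all $n$, and since $\rnk(\upvarphi_n) + 1 \leq \rnk(\bigwedge_n \upvarphi_n) < \beta$, each $\upvarphi_n$ has rank $< \beta$ and the inductive hypothesis applies to each conjunct. The negation case is where the real work lies. Suppose $\upvarphi = \neg \uppsi$ with $\rnk(\upvarphi) = \rnk(\uppsi) + 1 < \beta$, so $\rnk(\uppsi) < \beta$ as well, in fact $\rnk(\uppsi) + 1 < \beta$. We must show $p \Vdash_\gamma \neg\uppsi \iff p^\beta \Vdash_\beta \neg\uppsi$, i.e. that $p$ has no extension in $\mathbb{P}_\gamma$ forcing $\uppsi$ iff $p^\beta$ has no extension in $\mathbb{P}_\beta$ forcing $\uppsi$. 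One direction is easy: given $q \leq_\gamma p$ with $q \Vdash_\gamma \uppsi$, the monotonicity of retagging gives $q^\beta \leq_\beta p^\beta$, and the inductive hypothesis (applicable since $\rnk(\uppsi) < \beta$) gives $q^\beta \Vdash_\beta \uppsi$; contrapositively, if $p^\beta \Vdash_\beta \neg \uppsi$ then $p \Vdash_\gamma \neg\uppsi$.

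The hard direction — and the main obstacle — is the converse: starting from a condition $r \leq_\beta p^\beta$ in $\mathbb{P}_\beta$ with $r \Vdash_\beta \uppsi$, I must produce an \emph{extension in $\mathbb{P}_\gamma$} of $p$ itself (not of $p^\beta$) that forces $\uppsi$. The issue is that $r$ is a tagged tree with tags in $\omega\beta \cup \{\infty\}$, whereas to extend $p$ I need tags compatible with $p$'s original tags, which may lie in the interval $[\omega\beta, \omega\gamma)$. The standard device is a \textbf{re-tagging up}: one lifts $r$ to a condition $\tilde r \in \mathbb{P}_\gamma$ by keeping $r$'s tree, restoring the original tags of $p$ on nodes of $T$, and assigning to the new nodes of $r$ (those outside $T$) tags chosen to lie strictly below the tags of their predecessors while respecting the constraint $< \infty$ where needed — this can be done because on any finite tree one only needs finitely many ordinals below $\omega\gamma$, and crucially the nodes tagged $\infty$ in $r$ can be re-tagged $\infty$ in $\tilde r$. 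One then checks (i) $\tilde r \leq_\gamma p$ in $\mathbb{P}_\gamma$, (ii) $(\tilde r)^\beta = r$ or at least $(\tilde r)^\beta \leq_\beta r$, and (iii) by the inductive hypothesis applied to $\tilde r$, from $(\tilde r)^\beta \Vdash_\beta \uppsi$ we get $\tilde r \Vdash_\gamma \uppsi$, witnessing that $p \not\Vdash_\gamma \neg\uppsi$. The delicate bookkeeping is ensuring the re-tagging is monotone and that it does not disturb the atomic forcing facts — but this is exactly where the rank bound $\rnk(\uppsi) + 1 < \beta$ is used: it guarantees that the "depth" of branching relevant to deciding $\uppsi$ is bounded well below $\beta$, so the finitely many tags one must invent can all be found below $\omega\beta$, and the retagging to level $\beta$ genuinely recovers $r$ (up to an inessential strengthening). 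I would carry out this re-tagging argument carefully as the technical heart of the proof, after dispatching the atomic and conjunction cases and the easy direction of the negation case.
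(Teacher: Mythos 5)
The paper states the Retagging Lemma without proof, citing it as a known component of Steel's forcing; so I will evaluate your argument on its own terms. The induction set-up, the atomic case, the conjunction case, and the easy direction of the negation case are all fine. The genuine gap is in step (ii) of your hard direction: the claim that $(\tilde r)^\beta = r$ (or even $(\tilde r)^\beta \leq_\beta r$). Since $\tilde r$ and $r$ share the same tree $S$, the relation $(\tilde r)^\beta \leq_\beta r$ would force equality, and equality fails in general. Concretely, take $\sigma \in T$ with $h_p(\sigma) = \omega\beta$ and a chain $\sigma \subset \tau_1 \subset \tau_2$ in $S \setminus T$ with $h_r(\tau_1) = h_r(\tau_2) = \infty$ (legitimate, since $h_{p^\beta}(\sigma) = \infty$). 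To build $\tilde r \leq_\gamma p$ you must set $h_{\tilde r}(\sigma) = \omega\beta$, so $h_{\tilde r}(\tau_1), h_{\tilde r}(\tau_2)$ are ordinals strictly below $\omega\beta$; then $(\tilde r)^\beta$ keeps those ordinal tags while $r$ has $\infty$ there. Your stated plan also contains an internal tension: you say the new tags "can all be found below $\omega\beta$" and simultaneously that "retagging to level $\beta$ genuinely recovers $r$," but tags below $\omega\beta$ are fixed by the retagging map and therefore cannot become $\infty$ again. Either way, $(\tilde r)^\beta \neq r$, and step (iii) — which uses $(\tilde r)^\beta \Vdash_\beta \uppsi$ — is unfounded.

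What is missing is a strictly stronger lemma (sometimes called a Replacement or Tag-Insensitivity Lemma) of the form: if $(T,h)$ and $(T,h')$ are tagged trees that agree at every node where at least one of the two tags is below $\omega\xi$, then they force the same tree-formulas of rank $< \xi$. This is proved by its own induction, and it is exactly this lemma that makes the back-and-forth in the negation step go through: one replaces large tags by \emph{other} large tags (not necessarily the same ones), using the fact that $\omega\xi$ is a limit ordinal so there is always an infinite interval $[\omega \cdot \rnk(\uppsi),\, h(\sigma))$ to draw fresh strictly-decreasing tags from — this is precisely why the tag range is $\omega\alpha$ and not $\alpha$. The Retagging Lemma is then the special case where $h' = h^\beta$ and $\xi = \beta$. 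Without this auxiliary result, the hypothesis $r \Vdash_\beta \uppsi$ gives you no information about $(\tilde r)^\beta$, since the two conditions are $\leq_\beta$-incomparable. You should isolate and prove the Replacement Lemma first, then derive the Retagging Lemma as a corollary, rather than attempting a direct induction that presupposes a bijective correspondence between $\mathbb{P}_\beta$-extensions of $p^\beta$ and $\mathbb{P}_\gamma$-extensions of $p$.
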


Any generic filter \(g\subseteq \mathbb{P}_\alpha\) can be canonically identified with a tagged tree by taking unions. The tree \(t(g)\) is obtained from such a tagged tree by removing the tags, that is, 
\[
t(g) = \bigcup\{T : (T,h) \in g \text{ for some } h \}.
\]
Formally, \( t(g) \) is a subset of \( \omega^{< \omega} \), and thus \( t(g) \in 2^{(\omega^{<\omega})}\). However, the latter can be canonically identified with \( 2^\omega \), hence when needed we can conveniently regard \( t(g) \) as an element of the Cantor space, so that it makes sense to e.g.\ consider \( \omega_1^{t(g)} \) and alike.

For any tree-formula \( \upvarphi \), there is an interesting link between the forcing relation \( p \Vdash_\beta \upvarphi\) and the set \( Q_\upvarphi \) represented by \( \upvarphi \) introduced in Definition~\ref{def:Q_phi}.

\begin{proposition} \label{prop:Q_phi}
Let \( \alpha < \omega_1 \), \( \upvarphi \) be a tree-formula, and \( g \) be \( \mathbb{P}_\alpha \)-generic.
Then
\[
t(g) \in Q_\upvarphi \qquad \text{if and only if} \qquad p \Vdash_\alpha \upvarphi \text{ for some } p \in g.
\]
\end{proposition}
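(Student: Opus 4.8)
The plan is to prove both implications by a single induction on the rank \( \rnk(\upvarphi) \), after recording two auxiliary facts about \( \Vdash_\alpha \), each itself proved by induction on \( \rnk(\upvarphi) \). The first is \emph{downward closure}: if \( p \Vdash_\alpha \upvarphi \) and \( q \leq_\alpha p \), then \( q \Vdash_\alpha \upvarphi \); this is immediate from the definition in all three cases (for an atomic \( \upvarphi = (\sigma \in X) \) one uses that, writing \( p=(T_p,h_p) \) and \( q = (T_q,h_q) \), we have \( T_q \supseteq T_p \) and \( h_q \restriction T_p = h_p \)). The second, which is the real engine, is \emph{quasi-density of negation}: if \( p \not\Vdash_\alpha \upvarphi \) then there is \( q \leq_\alpha p \) with \( q \Vdash_\alpha \neg\upvarphi \). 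For a negation this is a one-line consequence of downward closure, and for a conjunction \( \bigwedge_n \upvarphi_n \) it reduces to quasi-density for a single \( \upvarphi_{n_0} \) with \( p \not\Vdash_\alpha \upvarphi_{n_0} \). The atomic case is the delicate point: if \( p \not\Vdash_\alpha (\sigma \in X) \), so \( \sigma \notin T_p \) and \( \sigma \) is not an immediate successor of any node of \( T_p \) with nonzero tag, then either the branch towards \( \sigma \) already runs into a \( 0 \)-tagged node of \( T_p \) (in which case \( p \) itself forces \( \neg(\sigma \in X) \)) or else one appends to \( T_p \), just below its last node on that branch, a single new node with tag \( 0 \); the strict-decrease requirement on tags makes \( 0 \) the correct choice, the result is a legitimate condition \( q \leq_\alpha p \), and no extension of \( q \) can ever put \( \sigma \) into its tree, so \( q \Vdash_\alpha \neg(\sigma \in X) \). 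Combining the two facts, for every tree-formula \( \upvarphi \) the set \( \{ q : q \Vdash_\alpha \upvarphi \text{ or } q \Vdash_\alpha \neg\upvarphi \} \) is dense in \( \mathbb{P}_\alpha \).

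Next I would prove, by simultaneous induction on \( \rnk(\upvarphi) \): (A) if \( p \in g \) and \( p \Vdash_\alpha \upvarphi \) then \( t(g) \in Q_\upvarphi \); and (B) if \( p \in g \) and \( p \Vdash_\alpha \neg\upvarphi \) then \( t(g) \notin Q_\upvarphi \). For (A) with \( \upvarphi = (\sigma \in X) \): either \( \sigma \in T_p \subseteq t(g) \), or \( \sigma \) is an immediate successor of some \( \tau \in T_p \) with \( h_p(\tau) \neq 0 \), and then \( \{ q \leq_\alpha p : \sigma \in T_q \} \) is dense below \( p \) (add \( \sigma \) with tag \( 0 < h_p(\tau) \)), so genericity gives \( \sigma \in t(g) \). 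For (A) with \( \upvarphi = \neg\uppsi \) the conclusion is literally statement (B) for \( \uppsi \), available by induction since \( \rnk(\uppsi) < \rnk(\upvarphi) \). For (A) with \( \upvarphi = \bigwedge_n \upvarphi_n \): \( p \) forces every \( \upvarphi_n \), so \( t(g) \in Q_{\upvarphi_n} \) for all \( n \), hence \( t(g) \in \bigcap_n Q_{\upvarphi_n} = Q_\upvarphi \). For (B) with \( \upvarphi = (\sigma \in X) \): if \( \sigma \in T_q \) for some \( q \in g \), a common lower bound \( r \in g \) of \( p,q \) has \( \sigma \in T_r \), so \( r \Vdash_\alpha (\sigma \in X) \), contradicting \( r \leq_\alpha p \) and \( p \Vdash_\alpha \neg(\sigma \in X) \); hence \( \sigma \notin t(g) \). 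For (B) with \( \upvarphi = \neg\uppsi \): \( p \Vdash_\alpha \neg\neg\uppsi \) says exactly that \( \{ q \leq_\alpha p : q \Vdash_\alpha \uppsi \} \) is dense below \( p \); genericity yields \( q \in g \) forcing \( \uppsi \), and statement (A) for \( \uppsi \) gives \( t(g) \in Q_\uppsi \), i.e.\ \( t(g) \notin Q_{\neg\uppsi} = Q_\upvarphi \). For (B) with \( \upvarphi = \bigwedge_n \upvarphi_n \): by definition \( \forall q \leq_\alpha p\, \exists n\, (q \not\Vdash_\alpha \upvarphi_n) \), so quasi-density makes \( \{ q \leq_\alpha p : \exists n\, (q \Vdash_\alpha \neg\upvarphi_n) \} \) dense below \( p \); genericity produces \( q \in g \) and \( n \) with \( q \Vdash_\alpha \neg\upvarphi_n \), and statement (B) for \( \upvarphi_n \) gives \( t(g) \notin Q_{\upvarphi_n} \), whence \( t(g) \notin \bigcap_m Q_{\upvarphi_m} = Q_\upvarphi \).

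To finish, given any tree-formula \( \upvarphi \) and a generic \( g \), density of the ``deciding'' set yields \( p \in g \) with \( p \Vdash_\alpha \upvarphi \) or \( p \Vdash_\alpha \neg\upvarphi \). In the first case (A) gives \( t(g) \in Q_\upvarphi \) and the right-hand side of the equivalence holds. In the second case (B) gives \( t(g) \notin Q_\upvarphi \), and no \( q \in g \) can force \( \upvarphi \): a common lower bound \( r \in g \) of \( p \) and \( q \) would satisfy \( r \Vdash_\alpha \upvarphi \) by downward closure, while \( r \leq_\alpha p \) and \( p \Vdash_\alpha \neg\upvarphi \) forbid this; so both sides of the equivalence fail.

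I expect the main obstacle to be exactly the atomic case of the quasi-density lemma: confirming that whenever \( p \) does not force \( \sigma \) into the generic tree, one can \emph{honestly} extend \( p \) to a condition that blocks \( \sigma \) permanently. This is the only place where the arithmetic of the tags is used in an essential, non-formal way (locating the last node of \( T_p \) on the branch to \( \sigma \), distinguishing the \( 0 \)-tagged and positive-tagged subcases, and checking that appending one \( 0 \)-tagged node respects strict decrease and cannot be undone by any extension). By contrast, the infinitary conjunction causes no trouble: in direction (A) because \( \Vdash_\alpha \) of a conjunction is \emph{defined} as forcing each conjunct, and in direction (B) precisely because quasi-density converts ``\( q \) fails to force \( \upvarphi_n \)'' into ``some extension of \( q \) forces \( \neg\upvarphi_n \)''; the remaining steps are routine density arguments plus the inductive hypothesis.
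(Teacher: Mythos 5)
Your proof is correct, and it is precisely the standard rank induction that the paper gestures at; the paper itself does not present the argument, deferring instead to Lutz's unpublished notes. The auxiliary facts you isolate (downward closure of \(\Vdash_\alpha\) and quasi-density of negations, with the atomic case handled by appending a single \(0\)-tagged node to permanently block the branch toward \(\sigma\)) are exactly the expected scaffolding, and the simultaneous induction on statements (A) and (B) is the standard way to organize it.
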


The above proposition can be proved by induction on the complexity of \( \upvarphi \), using standard arguments. For a detailed exposition, see e.g.\ \cite[Theorem 3.5]{Lut}, keeping in mind that \( t(g) \in Q_\upvarphi \) if and only if, in Lutz's notation, \( t(g) \models \upvarphi \).

For the rest of this section, we fix \( r \in 2^\omega \) and analyze what happens when using various Steel's forcings \( \mathbb{P}_\alpha \) over \( L(r) \).

\begin{proposition}[Folklore] \label{prop:existenceofgenerics}
Assume \(\boldsymbol{\Sigma}^1_1\)-determinacy, and fix any \( \alpha < \omega_1 \). Then for every \( p \in \mathbb{P}_\alpha \) there is a \( \mathbb{P}_\alpha \)-generic \( g \) over \(L(r)\) such that \( p \in g \).
\end{proposition}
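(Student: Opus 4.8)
The plan is to reduce the statement to the assertion that, under $\boldsymbol{\Sigma}^1_1$-determinacy, only countably many dense subsets of $\mathbb{P}_\alpha$ belong to $L(r)$, and then to build $g$ by the usual diagonalization below $p$. All of the content is in the first step, which is where the determinacy hypothesis enters.

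First I would record that, by Harrington's theorem, $\boldsymbol{\Sigma}^1_1$-determinacy implies that $r^\#$ exists, and hence --- via the theory of Silver indiscernibles --- that every uncountable cardinal of $V$, in particular $\aleph_1 = \aleph_1^V$, is inaccessible in $L(r) = L[r]$. Since $\alpha$ is an ordinal, $\mathbb{P}_\alpha$ is definable from $\alpha$, so $\mathbb{P}_\alpha \in L(r)$, and a direct count of conditions gives $|\mathbb{P}_\alpha|^{L(r)} = |\alpha|^{L(r)} + \aleph_0$: there are only $\aleph_0$ finite trees on $\omega$, and for each of them at most $|\omega\alpha \cup \{\infty\}|^{|T|} \leq |\alpha| + \aleph_0$ admissible taggings. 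As $\alpha < \aleph_1$ and $\aleph_1$ is a cardinal of $L(r)$, we get $|\alpha|^{L(r)} < \aleph_1$, hence $|\mathbb{P}_\alpha|^{L(r)} < \aleph_1$. Since $L(r) \models \mathsf{GCH}$ and $\aleph_1$ is a limit cardinal of $L(r)$, it follows that $\mathcal{P}(\mathbb{P}_\alpha)^{L(r)}$ has $L(r)$-cardinality $\bigl(|\mathbb{P}_\alpha|^{L(r)}\bigr)^{+L(r)} < \aleph_1$; as $\mathcal{P}(\mathbb{P}_\alpha) \cap L(r) = \mathcal{P}(\mathbb{P}_\alpha)^{L(r)}$, this set is genuinely countable.

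The construction of $g$ is then routine. I would fix an enumeration $(D_n)_{n \in \omega}$ of all dense subsets of $\mathbb{P}_\alpha$ lying in $L(r)$, recursively choose a $\leq_\alpha$-decreasing sequence $p = p_0 \geq_\alpha p_1 \geq_\alpha \cdots$ with $p_{n+1} \in D_n$ (possible since each $D_n$ is dense), and set $g = \{ q \in \mathbb{P}_\alpha : p_n \leq_\alpha q \text{ for some } n \in \omega \}$. As the $p_n$ form a chain, $g$ is a filter; it contains $p = p_0$; and it meets every $D_n$, hence every dense subset of $\mathbb{P}_\alpha$ in $L(r)$, so $g$ is $\mathbb{P}_\alpha$-generic over $L(r)$.

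The one slightly delicate point is the countability of $\mathcal{P}(\mathbb{P}_\alpha) \cap L(r)$ when $\alpha \geq \omega_1^{L(r)}$, so that $\mathbb{P}_\alpha$ is uncountable in $L(r)$ and one cannot simply appeal to the countability of $\mathcal{P}(\omega) \cap L(r)$: this is precisely where $\mathsf{GCH}$ in $L(r)$ together with the inaccessibility of $\aleph_1$ in $L(r)$ --- the large-cardinal-type consequence of $\boldsymbol{\Sigma}^1_1$-determinacy --- are used. For $\alpha < \omega_1^{L(r)}$ the argument is more elementary, since then $\mathbb{P}_\alpha$ is countable in $L(r)$ and $\mathcal{P}(\mathbb{P}_\alpha)^{L(r)}$ is in $L(r)$-bijection with $\mathcal{P}(\omega)^{L(r)}$, which is countable because $\omega_1^{L(r)} < \aleph_1$.
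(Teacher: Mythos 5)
Your proof is correct and takes essentially the same route as the paper: both reduce to showing that only countably many dense subsets of $\mathbb{P}_\alpha$ belong to $L(r)$, using the fact (via $r^\#$) that $\boldsymbol{\Sigma}^1_1$-determinacy makes $\aleph_1^V$ inaccessible in $L(r)$, and then diagonalize. The paper's version is just terser, citing Jech for the inaccessibility and leaving the cardinal arithmetic implicit.
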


\begin{proof}
Work in \( L(r) \).
Since \( |\mathbb{P}_\alpha| < \omega_1^V \), there are at most \( 2^{|\mathbb{P}_\alpha|}\)-many dense subsets of \( \mathbb{P}_\alpha \), 
and since under our assumptions \( \omega_1^V \) is inaccessible in \( L(r) \)~\cite[Corollary~33.11]{Jec}, there are strictly less than \( \omega_1^V\)-many of them. Therefore, stepping back into \( V \), there are only countably many dense subsets of \( \mathbb{P}_\alpha \) which belong to \( L(r) \), and thus the required \( \mathbb{P}_\alpha \)-generic \( g \) can easily be constructed using the standard argument (see e.g.\ \cite[Lemma 14.4]{Jec}).
\end{proof}

Recall that a countable ordinal \(\alpha\) is \markdef{\(r\)-admissible}, for some \(r \in 2^\omega\),  if there is \(y \in  2^\omega\) such that \(\alpha = \omega_1^{\langle r,y\rangle}\).
We will also need the following fact, due to Steel (see~\cite[Theorems~2.9 and 2.10]{Har78}). 

\begin{fact} \label{fact:omega1}
If \(\alpha < \omega_1 \) is \(r\)-admissible and \(g\) is \(\mathbb{P}_\alpha\)-generic
over \(L(r)\), then \(\omega_1^{t(g)} = \omega_1^{\langle t(g),r\rangle} = \alpha\). 
\end{fact}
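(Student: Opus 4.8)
The plan is to establish both equalities $\omega_1^{t(g)} = \omega_1^{\langle t(g),r\rangle} = \alpha$ by a combination of a genericity/density argument (to push $\omega_1^{t(g)}$ up to $\alpha$) and an absoluteness argument (to keep it from exceeding $\alpha$). First I would observe that since $g$ is $\mathbb{P}_\alpha$-generic over $L(r)$, the tagged tree $\bigcup g$ uses only tags below $\omega\alpha$ together with $\infty$, and genericity guarantees that it is in a strong sense ``saturated below $\omega\alpha$'': for every tag value $\gamma < \omega\alpha$ and every node, a suitable density argument produces an extension where that node gets an immediate successor tagged with any prescribed ordinal $< \gamma$. Consequently $t(g)$, viewed as an element of $2^\omega$, computes (uniformly) a well-founded relation of rank exactly $\omega\alpha$, which forces $\omega_1^{t(g)} \geq \alpha$ (recall $\omega\alpha$ and $\alpha$ have the same admissible closure since $\alpha$ is a limit of the relevant admissibles, or more simply because $\omega\alpha < \alpha^+$). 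This is the classical content of Steel's analysis and I would cite \cite{Har78} for the precise density lemmas.

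Next I would handle the upper bound $\omega_1^{\langle t(g),r\rangle} \leq \alpha$, which is where $r$-admissibility of $\alpha$ enters. Since $\alpha$ is $r$-admissible, fix $y \in 2^\omega$ with $\alpha = \omega_1^{\langle r,y\rangle}$; then $L_\alpha[\langle r,y\rangle]$ is admissible. The poset $\mathbb{P}_\alpha$ and the relevant dense sets are all coded inside $L_\alpha(r) \subseteq L_\alpha[\langle r,y \rangle]$, so the generic $g$ (and hence $t(g)$) meets every dense set lying in this small admissible structure; a Löwenheim–Skolem / reflection argument then shows that $L_\alpha[\langle t(g), r\rangle]$ is itself admissible, giving $\omega_1^{\langle t(g),r\rangle} \leq \alpha$. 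Combined with the trivial inequalities $\omega_1^{t(g)} \leq \omega_1^{\langle t(g),r\rangle}$ and the lower bound $\omega_1^{t(g)} \geq \alpha$ from the previous paragraph, all three quantities coincide and equal $\alpha$.

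The main obstacle is the bookkeeping in the upper-bound argument: one must verify carefully that $L(r)$-genericity of $g$ already suffices to reflect admissibility down to $L_\alpha[\langle t(g),r\rangle]$, rather than only to some larger level. This uses that $\alpha$ is $r$-admissible in an essential way — the dense sets needed to pin down $\omega_1^{t(g)}$ lie in $L_\alpha(r)$, and $\alpha$ being $r$-admissible means there are ``enough'' of them available to be met by a generic realized inside the admissible set $L_\alpha[\langle r,y\rangle]$. Rather than reproducing Steel's full construction, I would structure the proof as a citation to \cite[Theorems~2.9 and 2.10]{Har78} for the density lemmas, supplying only the short admissibility-reflection verification in detail, since that is the part specific to how the fact is used here (namely, with genericity taken over $L(r)$ rather than over $L$ of a richer parameter).
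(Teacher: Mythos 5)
The paper does not give a proof of this fact: it is stated with a pointer to~\cite[Theorems~2.9 and 2.10]{Har78} and nothing more. Your decomposition --- a density argument for $\omega_1^{t(g)}\geq\alpha$ and preservation of admissibility for $\omega_1^{\langle t(g),r\rangle}\leq\alpha$ --- is the right shape of Steel's argument, but the lower-bound step contains a claim that, read literally, contradicts the conclusion you are trying to prove. If $t(g)$ uniformly computed a well-founded relation of rank exactly $\omega\alpha$ (and $\omega\alpha=\alpha$ here, since $\alpha$ is admissible), the boundedness theorem would force $\omega_1^{t(g)}>\alpha$, not $=\alpha$. What genericity actually gives is that $t(g)$ hyperarithmetically codes each ordinal strictly \emph{below} $\alpha$, while the ranked part of $t(g)$ --- the subset whose rank genuinely is $\omega\alpha$ --- is deliberately not $\Delta^1_1$ in $t(g)$; the invisibility of the ranked/unranked distinction at quantifier rank below $\alpha$ is exactly what the Retagging Lemma (Theorem~\ref{thm:retagging}) guarantees. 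The same lemma, not a general L\"owenheim--Skolem or reflection argument, is also what drives the upper bound: it shows that the forcing relation for tree-formulas of rank $<\beta$ is already decided inside $\mathbb{P}_\beta$ for $\beta<\alpha$, and this local definability of forcing is what yields $\Sigma_1$-bounding in $L_\alpha[t(g),r]$. So both steps you flag as the main obstacle are indeed the crux, and both hinge on retagging rather than on the devices you invoked.
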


Finally, fix \(\beta<\gamma < \omega_1\), and suppose that there are both \( \mathbb{P}_\beta \)-generics and \( \mathbb{P}_\gamma \)-generics over \( L(r) \). Then we say that \(Q\subseteq 2^{(\omega^{< \omega})}\) \textbf{separates \(\mathbb{P}_\beta\)-generics from  \(\mathbb{P}_\gamma\)-generics over \( L(r) \)}  if the following conditions hold:
\begin{enumerate-(a)}
\item 
If \(g\) is \(\mathbb{P}_\beta\)-generic over \( L(r) \), then \(t(g)\in Q\);
\item 
If \(g\) is \(\mathbb{P}_\gamma\)-generic over \( L(r) \), then \(t(g)\notin Q\).
\end{enumerate-(a)}

The Retagging Lemma~\ref{thm:retagging} yields the following useful fact, in which the \(\boldsymbol{\Sigma}^1_1\)-determinacy assumption is only used to ensure the existence of \( \mathbb{P}_\beta \)-generics and \( \mathbb{P}_\gamma \)-generics over \( L(r) \). 

\begin{proposition} \label{prop : complexity separating set}
Assume \(\boldsymbol{\Sigma}^1_1\)-determinacy.
Suppose that for some \(\beta<\gamma < \omega_1 \) with \( \beta \) limit, the set \(Q \subseteq 2^{(\omega^{< \omega})}\) separates \(\mathbb{P}_\beta\)-generics from  \(\mathbb{P}_\gamma\)-generics over \( L(r) \). 
Then \(Q\) is not \(\boldsymbol{\Pi}^0_{\beta+1}\).
\end{proposition}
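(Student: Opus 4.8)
The plan is to argue by contradiction: assuming $Q\in\boldsymbol{\Pi}^0_{\beta+1}$, I will produce a $\mathbb{P}_\beta$-generic filter $g'$ over $L(r)$ with $t(g')\notin Q$, contradicting clause~(a) in the definition of separation. The only ingredients are the Retagging Lemma~\ref{thm:retagging}, the dictionary between tree-formulas and the sets $Q_\upvarphi$ supplied by Lemma~\ref{lem:Q_phi} and Proposition~\ref{prop:Q_phi}, and the existence of generics (Proposition~\ref{prop:existenceofgenerics}), which is the only place $\boldsymbol{\Sigma}^1_1$-determinacy enters.

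It is worth isolating first the obstruction that shapes the whole argument, since the main point is to circumvent it. The naive attempt would be to represent $Q$ by a single tree-formula $\upvarphi$ and to transfer forcing statements between $\mathbb{P}_\gamma$ and $\mathbb{P}_\beta$ by retagging. This cannot work directly: by Lemma~\ref{lem:Q_phi} a $\boldsymbol{\Pi}^0_{\beta+1}$ set is represented only by a tree-formula of rank $3+2(\beta+1)$, which equals $\beta+2$ because $\beta$ is a limit ordinal (so that $3+2\beta=\beta$); as $\beta+2\not<\beta$, the Retagging Lemma does not apply to $\upvarphi$. The remedy, and the conceptual heart of the proof, is to retag along a canonical countable decomposition of $Q$ rather than $Q$ itself: each ``small'' piece will be represented by a tree-formula of rank strictly below $\beta$.

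Concretely, fix $Q=\bigcap_n B_n$ with $B_n\in\boldsymbol{\Sigma}^0_\beta$, and, using that $\beta$ is limit, further write $B_n=\bigcup_m C_{n,m}$ with $C_{n,m}\in\boldsymbol{\Pi}^0_{\xi_{n,m}}$ for suitable $\xi_{n,m}<\beta$. Then $B_n^{c}=\bigcap_m C_{n,m}^{c}$, each $C_{n,m}^{c}$ lies in $\boldsymbol{\Sigma}^0_{\xi_{n,m}}$, and Lemma~\ref{lem:Q_phi} represents it by a tree-formula $\upchi_{n,m}$ of rank $3+2\xi_{n,m}+1$. A short computation with ordinals, again exploiting that $\beta$ is limit (so $3+2\xi<\beta$ whenever $\xi<\beta$), yields $\rnk(\upchi_{n,m})<\beta<\gamma$. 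Hence $B_n^{c}$ is represented by the tree-formula $\bigwedge_m\upchi_{n,m}$, and the point is that each of its conjuncts has rank below $\beta$, so the Retagging Lemma applies to each conjunct separately (even though it need not apply to $\bigwedge_m\upchi_{n,m}$ as a whole).

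Finally, the main step. Fix a $\mathbb{P}_\gamma$-generic $g$ over $L(r)$. By clause~(b), $t(g)\notin Q$, so $t(g)\notin B_{n_0}$ for some $n_0$, i.e.\ $t(g)\in Q_{\bigwedge_m\upchi_{n_0,m}}$; by Proposition~\ref{prop:Q_phi} there is $p\in g$ with $p\Vdash_\gamma\bigwedge_m\upchi_{n_0,m}$, hence $p\Vdash_\gamma\upchi_{n_0,m}$ for every $m$. Retagging each $\upchi_{n_0,m}$ (its rank is below $\beta$) gives $p^\beta\Vdash_\beta\upchi_{n_0,m}$ for every $m$, and so $p^\beta\Vdash_\beta\bigwedge_m\upchi_{n_0,m}$ by the forcing clause for conjunctions. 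Now pick, by Proposition~\ref{prop:existenceofgenerics}, a $\mathbb{P}_\beta$-generic $g'$ over $L(r)$ with $p^\beta\in g'$; by Proposition~\ref{prop:Q_phi}, $t(g')\in Q_{\bigwedge_m\upchi_{n_0,m}}=B_{n_0}^{c}$, whence $t(g')\notin B_{n_0}\supseteq Q$, contradicting clause~(a) applied to $g'$. The only slightly delicate routine point is the ordinal bookkeeping ensuring $\rnk(\upchi_{n,m})<\beta$; everything else is a direct application of the listed results, once one commits to retagging the decomposition of $Q$ piece by piece rather than $Q$ itself.
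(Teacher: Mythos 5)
Your proof is correct, and it rests on exactly the same two pillars as the paper's argument: decompose \( Q \in \boldsymbol{\Pi}^0_{\beta+1} \) so that each \( \boldsymbol{\Pi}^0_\beta \) piece of the complement is a countable conjunction of tree-formulas of rank strictly below \( \beta \) (using that \( \beta \) is limit, with the convention that \( 2\xi \) is \( 2\cdot\xi \), so the rank \( 3+2\xi+1 \) collapses below \( \beta \)), and apply the Retagging Lemma conjunct by conjunct rather than to a single formula representing \( Q \). Where you differ is in the direction of the argument. The paper starts from a \( \mathbb{P}_\beta \)-generic, finds a condition \( p_1 \) forcing the full formula \( \bigwedge_i \neg\upvarphi_i \) representing \( Q \), extends \( p_1 \) to a \( \mathbb{P}_\gamma \)-generic, obtains \( p_2 \leq_\gamma p_1 \) forcing some piece \( \upvarphi_{\bar\imath} \) of the complement, and after retagging reaches a contradiction inside the forcing relation (an extension of \( p_1 \) forces at level \( \beta \) a formula that \( p_1 \) forces the negation of). You instead start from a \( \mathbb{P}_\gamma \)-generic, extract a condition forcing (at level \( \gamma \)) the conjunction representing the relevant piece of the complement, retag it, and then realize a \( \mathbb{P}_\beta \)-generic through the retagged condition whose tree lands outside \( Q \), contradicting clause (a) directly. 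Your mirror-image route is slightly leaner: it never needs a tree-formula for \( Q \) itself, and so never touches the negation clause of the forcing or any density reasoning; the price is that you invoke the existence of generics twice (once at level \( \gamma \), once at level \( \beta \) through a prescribed condition), whereas the paper anchors everything to one \( \beta \)-generic and one \( \gamma \)-generic sharing the condition \( p_1 \). Both uses of generic existence are covered by Proposition~\ref{prop:existenceofgenerics}, so this is a matter of presentation, not of substance.
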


\begin{proof} 
Towards a contradiction, suppose that \(Q\) is \(\boldsymbol{\Pi}^0_{\beta+1}\). 
Write \( 2^{(\omega^{< \omega} )} \setminus Q \) as \( \bigcup_{i \in \omega} R_i \) for some \( R_i \in \boldsymbol{\Pi}^0_\beta \), and for each \( i \in \omega \)
write \( R_i \) as \( \bigcap_{k \in \omega} S^{i}_k \), where \( S^{i}_k \in \boldsymbol{\Sigma}^0_{\alpha_{i,k}} \) for some \( \alpha_{i,k} < \beta \). 
For each \( i,k \in \omega \), let \( \upvarphi_{i,k} \) be a tree-formula with \( \rnk(\upvarphi_{i,k}) < \beta \) representing \( S^{i}_k \)  (here we use Lemma~\ref{lem:Q_phi} and the fact that \( \beta \) is limit), so that by Definition~\ref{def:Q_phi} the tree-formula \( \bigwedge_{k \in \omega} \upvarphi_{i,k}\), abbreviated by \( \upvarphi_i \), represents \( R_i \), and  
\( \bigwedge_{i \in \omega} \neg \upvarphi_i \), which will also be denoted by \( \upvarphi \), represents \( Q \).

Let \(g\) be any \(\mathbb{P}_\beta\)-generic over \( L(r) \). Since \( t(g) \in Q = Q_\upvarphi \) by the separation assumption, using Proposition~\ref{prop:Q_phi} we get that there is \( p_1 \in g \subseteq \mathbb{P}_\beta \) such that \( p_1\Vdash_\beta \upvarphi \),
so that 
\( p_1 \Vdash_\beta \neg \upvarphi_i \) for every \( i \in \omega \), 
and hence 
\begin{equation}\tag{\(\ddagger\)} \label{eq:Steelseparation}
p \not\Vdash_\beta \upvarphi_i \quad \text{for every \( p \leq_\beta p_1 \) and \( i \in \omega \)}.
\end{equation}

On the other hand, since \( \mathbb{P}_\beta \subseteq \mathbb{P}_\gamma \) we have that there is a \( \mathbb{P}_\gamma \)-generic \( g' \) over \( L(r) \) such that \( p_1 \in g' \) (Lemma~\ref{prop:existenceofgenerics}). Since \( t(g') \notin Q = Q_\upvarphi \) by the separation assumption again, \( p \not\Vdash_\gamma \upvarphi \) for all \( p \in g' \). 
Specializing this to \( p = p_1  \) and recalling that \( \upvarphi \) is \( \bigwedge_{i \in \omega} \neg \upvarphi_{i} \), we get that there is \( \bar\imath \in \omega \) such that \( p_1 \not\Vdash_\gamma \neg \upvarphi_{\bar\imath} \), which in turn implies that \( p_2 \Vdash_\gamma \upvarphi_{\bar \imath} \) for some \( p_2 \leq_\gamma p_1\). Since \( \upvarphi_{\bar\imath} \) is \( \bigwedge_{k \in \omega} \upvarphi_{\bar\imath,k}\), this means that \( p_2 \Vdash_\gamma \upvarphi_{\bar\imath,k}\) for every \( k \in \omega \).
By the Retagging Lemma~\ref{thm:retagging}, we have \( p_2^\beta \Vdash_\beta \upvarphi_{\bar\imath,k} \) for all \( k \in \omega \), and hence \( p_2^\beta \Vdash_\beta \upvarphi_{\bar\imath}\). But since \( p^\beta_2 \leq_\beta p^\beta_1 = p_1 \) (because \( p_1 \in \mathbb{P}_\beta \)), this contradicts~\eqref{eq:Steelseparation}.
\end{proof}

\end{document}